\newcommand{\andSep}{\,\,\,\text{ and }\,\,\,}
\newcommand{\axiomO}[1]{(O#1)}
\newcommand{\NN}{{\mathbb{N}}}
\newcommand{\ca}{$C^*$-algebra}
\newcommand{\llideal}{\ensuremath{<\hskip-7pt\lhd\hskip3pt{}}}
\DeclareMathOperator{\Cu}{Cu}
\def\today{\number\day\space\ifcase\month\or   January\or February\or
   March\or April\or May\or June\or   July\or August\or September\or
   October\or November\or December\fi\   \number\year}
\newtheorem{lma}{Lemma}[section]
\newaliascnt{thmCt}{lma}
\newtheorem{thm}[thmCt]{Theorem}
\newaliascnt{corCt}{lma}
\newtheorem{cor}[corCt]{Corollary}
\newaliascnt{prpCt}{lma}
\newtheorem{prp}[prpCt]{Proposition}
\theoremstyle{definition}
\newaliascnt{dfnCt}{lma}
\newtheorem{dfn}[dfnCt]{Definition}
\newaliascnt{rmkCt}{lma}
\newtheorem{rmk}[rmkCt]{Remark}
\newaliascnt{rmksCt}{lma}
\newaliascnt{exaCt}{lma}
\newaliascnt{conjCt}{lma}
\newtheorem{conj}[conjCt]{Conjecture}
\newaliascnt{exasCt}{lma}
\newtheorem{exas}[exasCt]{Examples}
\newaliascnt{qstCt}{lma}
\newtheorem{qst}[qstCt]{Question}
\newaliascnt{pgrCt}{lma}
\newtheorem{pgr}[pgrCt]{}
\newcounter{theoremintro}
\newaliascnt{thmIntroCt}{theoremintro}
\newaliascnt{dfnIntroCt}{theoremintro}
\newaliascnt{prpIntroCt}{theoremintro}
\newaliascnt{corIntroCt}{theoremintro}
\newaliascnt{qstIntroCt}{theoremintro}
\newtheorem{qstIntro}[qstIntroCt]{Question}
\newcommand{\K}{\mathcal{K}}
\numberwithin{equation}{section}
\newcommand{\nocontentsline}[3]{}
\let\origcontentsline\addcontentsline
\newcommand\stoptoc{\let\addcontentsline\nocontentsline}
\newcommand\resumetoc{\let\addcontentsline\origcontentsline}
\title{An introduction to the Global Glimm Problem}
\author{Eduard Vilalta}
\address{Eduard~Vilalta, 
Department de Matem\`{a}tiques, Universitat Polit\`{e}cnica de Catalunya, Diagonal 647, Barcelona.}
\email{eduard.vilalta@upc.edu}
\urladdr{www.eduardvilalta.com}
\thanks{
The author was partially supported by the Knut and Alice Wallenberg Foundation (KAW 2021.0140) and the Spanish State Research Agency (grant No. PID2023-147110NB-I00).
}
\subjclass[2020]%
{Primary
46L05. 
}
\keywords{$C^*$-algebras, nowhere scatteredness, Global Glimm Property}
\begin{document}

\begin{abstract}
The Global Glimm Problem lies at the heart of several open questions regarding regularity properties of \ca{s}. The problem has been open for over two decades, and has recently garnered significant attention due to its strong ties to non-simple versions of well-known dimension reduction phenomena for simple \ca{s}, such as the weakly purely infinite problem and the non-simple Toms-Winter conjecture.

This survey offers a concise introduction to the problem, detailing some of the key concepts and methods used in the partial solutions achieved thus far. Particular emphasis is placed on the implications that a complete resolution of the Global Glimm Problem would have for other open questions in the field. 
\end{abstract}

\maketitle

\tableofcontents

\section{Introduction}

A recurring theme in the modern theory of \ca{s} is the prevalence of dimension reduction phenomena. In various contexts, algebras that exhibit a sufficient degree of `non-commutativity' tend to behave as though they are low-dimensional, often displaying remarkable rigidity. This phenomenon manifests as a form of robustness, where apparently weak versions of regularity properties automatically upgrade to their strongest forms.

Among the many open problems in this area, this survey focuses on two specific questions that will serve to motivate our examples and results throughout:
\begin{itemize}
    \item[(Q1)] Are all weakly purely infinite \ca{s} purely infinite?
    \item[(Q2)] Which unital separable \ca{s} of finite nuclear dimension are $\mathcal{Z}$-stable? For which separable nuclear \ca{s} does pureness imply (and therefore agrees with) $\mathcal{Z}$-stability?
\end{itemize}

While formal definitions of the previous concepts are deferred to later sections, a brief remark about (Q1) and (Q2) is in order. In the simple setting, there is a clear divide: neither weakly purely infinite, pure, nor $\mathcal{Z}$-stable \ca{s} are elementary (that is, they are not isomorphic to the compact operators over any Hilbert space). Consequently, the simple versions of (Q1) and (Q2) presuppose non-elementariness. Once this is assumed, both questions famously admit affirmative answers: simple weakly purely infinite \ca{s} are purely infinite \cite{KirRor02InfNonSimpleCalgAbsOInfty}, and unital separable simple non-elementary \ca{s} of finite nuclear dimension are $\mathcal{Z}$-stable\footnote{Although this statement does not look like a dimension reduction theorem, we will see in \autoref{rmk:TWisDR} that ---once suitably interpreted--- it is.} \cite{Win12NuclDimZstable}. This reflects a broader principle for simple \ca{s}: once one departs from the elementary case, dimension reduction phenomena become ubiquitous. Such an idea has been instrumental in some of the latest breakthroughs in the structure and regularity theory of such \ca{s}, especially in the Elliott classification program \cite{Gon02,GonLinNiu20,Win14,TikWhiWin17QDNuclear} and in the study of the Toms-Winter conjecture \cite{CasEviTikWhiWin21NucDimSimple,Ror04StableRealRankZ,Win12NuclDimZstable}.

The success of dimension reduction phenomena for simple \ca{s} naturally prompts the question: what is the correct analogue of non-elementariness for non-simple \ca{s}? Furthermore, does such an analogue suffice to trigger similar reduction theorems?

A compelling candidate for this analogue is the requirement that no nonzero ideal of a quotient is elementary. Indeed, the condition is necessary for affirmative answers to questions like (Q1) and (Q2), as properties such as pure infiniteness, pureness, and $\mathcal{Z}$-stability pass to both ideals and quotients. This notion is termed \emph{nowhere scatteredness}, and is discussed at length in \autoref{sec:NSCa}.

For separable \ca{s}, nowhere scatteredness admits a topological characterization: A \ca{} is nowhere scattered if and only if every closed subset of its spectrum is perfect (contains no isolated points in its induced topology). Thus, the spectrum of a nowhere scattered \ca{} is highly non-Hausdorff, which should be viewed as a topological enforcement of sufficient non-elementariness. Conjecturally, it is this `gluing together' of points in the spectrum that generates the robustness required for dimension-reduction phenomena to occur.

However, there are also other candidates. For example, a simple \ca{} is non-elementary if and only if each hereditary sub-\ca{} contains a full square-zero element, a fact that can be found in the proof of Glimm's \cite[Lemma~4]{Gli61Type1}. Extending this to the general setting, one might consider \ca{s} where every hereditary sub-\ca{} contains an `approximately full' square-zero element. Kirchberg and R\o{}rdam introduced this concept in their investigation of pure infiniteness \cite{KirRor02InfNonSimpleCalgAbsOInfty}, aptly naming it the \emph{Global Glimm Property}. Crucially, they established that (Q1) holds if and only if weakly purely infinite algebras possess this property (\autoref{prp:GGPandPIP}), thereby initiating its systematic study. They also noted that the Global Glimm Property implies nowhere scatteredness (\autoref{prp:GGPimpNSCa}).

From a philosophical standpoint, nowhere scatteredness and the Global Glimm Property represent two extremes, with any other reasonable definition of `sufficient non-elementariness' lying in between. As we will discuss, nowhere scatteredness should be thought of as a natural, well-behaved baseline necessary for reduction phenomena. In contrast, the Global Glimm Property is technically more intricate and harder to verify, yet significantly more powerful in proofs. Throughout, we will encounter examples where nowhere scatteredness is well understood but the Global Glimm Property is not, alongside theorems that rely on the Global Glimm Property but remain elusive under the (formally) weaker assumption of nowhere scatteredness.

At the core of these discussions lies a singular issue: Are there multiple distinct analogues of non-elementariness? In light of the preceding paragraph, the relevant question is:

\begin{qstIntro}[The Global Glimm Problem]\label{qstIntro:GGP}
    Are nowhere scatteredness and the Global Glimm Property equivalent?
\end{qstIntro}

The problem has been open for over two decades (cf. \cite[Question~2]{EllRor06Perturb}) and has recently attracted renewed attention due to a surge of interest in non-simple or non-nuclear \ca{s} satisfying specific regularity conditions --- most notably strict comparison \cite{AGKEP24:StCompTwisted,Rob25Self,Oza25Arx} and pureness \cite{AntPerThiVil24arX:PureCAlgs}. As we will discuss, a complete positive answer to \autoref{qstIntro:GGP} would lead to a positive solution to both (Q1) and (Q2), and would deepen our understanding on the structure and regularity properties of \ca{s}.

\stoptoc

\subsection*{Outline of the survey} The organization of this manuscript may appear unconventional at first glance. Experts in the field will quickly notice that several results introduced early on rely on deep techniques that are only developed later in \autoref{sec:CuntzSgp}. This ordering, however, is intentional: we believe it offers a gentler introduction to the key concepts and helps build intuition before delving into deeper technicalities. The first two sections are devoted to presenting the central notions underlying the Global Glimm Problem. In \autoref{sec:ResAndAppl}, we explore the significance of the problem and survey the known partial solutions. Finally, \autoref{sec:CuntzSgp} presents a unified framework for approaching the problem via the Cuntz semigroup.

Throughout, the focus is on illustrative proofs. We avoid the most technically demanding arguments in favor of accessibility, assuming only a standard background in operator algebras (e.g. \cite{Bla06OpAlgs}). However, we take the opportunity to present proofs ---or, at least, sketches--- for results that, while known to experts, do not appear explicitly in the literature. We also provide translated variants of certain proofs, adapting abstract Cuntz semigroup arguments into the more familiar language of $C^*$-algebras.

\subsection*{Acknowledgments} This survey started as my notes for a talk about the Global Glimm Problem that
I gave during the \emph{XXIV School of Mathematics Lluis Santal\'{o} 2025 - Structure and Approximation of \ca{s}}, held at the Palacio de la Magdalena in Santander (Spain) in July 2025. I would like to thank F.~Lled\'{o}, F.~Perera and H.~Thiel for organizing it and inviting me.

\resumetoc

\section{Nowhere scatteredness}\label{sec:NSCa}

As discussed in the introduction, dimension reduction phenomena are naturally inscribed in the study of \ca{s} that have no nonzero elementary ideals of quotients (henceforth called \emph{ideal-quotients}). This minimal condition is what we call nowhere scatteredness:

\begin{dfn}[cf. {\cite[Theorem~B]{ThiVil24NowhereScattered}}]\label{dfn:NowScat}
    A \ca{} is \emph{nowhere scattered} if it has no nonzero elementary ideal-quotients.
\end{dfn}

\begin{exas}\label{exas:NowScat} $ $
\begin{enumerate} 
    \item Simple non-elementary \ca{s} are nowhere scattered, since the only nonzero ideal-quotient of a simple \ca{} is the algebra itself.
    
    \item We say that a \ca{} is \emph{$\mathcal{Z}$-stable} if it absorbs the \emph{Jiang-Su algebra} $\mathcal{Z}$ tensorially, that is, $A\otimes\mathcal{Z}\cong A$. The algebra $\mathcal{Z}$ was originally introduced in \cite{JiaSu99Projectionless} as an infinite-dimensional analogue of the complex numbers, and has played a crucial role in the structure and classification theory of nuclear simple \ca{s}; see \cite{Whi23ICM,Win18ICM} for an overview.
    
    $\mathcal{Z}$-stable algebras are all nowhere scattered, since $\mathcal{Z}$-stability passes to ideals and quotients \cite[Corollary~3.3]{TomWin07ssa} and no nonzero elementary algebra is $\mathcal{Z}$-stable.

    \item A von Neumann algebra is nowhere scattered if and only if it has no type I part; see \cite[Proposition~3.4]{ThiVil24NowhereScattered}.

    \item Let $\Theta$ be a real skew-symmetric $2d\times 2d$ matrix, and let $A_\Theta$ be the associated \emph{noncommutative torus}. Recall that, by definition, $A_\Theta$ is the universal \ca{} generated by unitaries $u_1,\ldots ,u_{2d}$ subject to the relations
    \[
        u_j u_k = e^{2\pi i\Theta_{j,k}}u_k u_j.
    \]

    One can show that $A_\Theta$ is nowhere scattered if and only if $\Theta$ is non rational, that is, if $\Theta$ has at least one irrational entry. This is in turn equivalent to $A_\Theta$ being $\mathcal{Z}$-stable, a result that follows as a combination of \cite[Theorem~1.5]{BlaKumRor92ApproxCentralMatUnit} and \cite[Theorem~2.3]{TomWin08ASH}. As we will see in \autoref{sec:ResAndAppl}, this equivalence between $\mathcal{Z}$-stability and nowhere scatteredness holds in much greater generality.
\end{enumerate}
\end{exas}

The following list of permanence properties is not exhaustive, but covers most of the important constructions. For example, it follows from (1) that a \ca{} is nowhere scattered if and only if its stabilization is. A proof of (1)-(3) is given in \cite[Section~4]{ThiVil24NowhereScattered}, while (4)-(5) can be found in \cite[Section~2]{EnsVil25arX:Twisted}.

\begin{prp}\label{prp:PermProp} The following permanence properties hold:
\begin{enumerate}
    \item Morita equivalence: Let $A,B$ be Morita equivalent. Then, $A$ is nowhere scattered if and only if $B$ is.
    \item Extensions: Nowhere scatteredness passes to hereditary sub-\ca{s} and quotients. Further, if $I$ is an ideal of a \ca{} $A$, then $A$ is nowhere scattered if and only if $I$ and $A/I$ are.
    \item Inductive limits: An inductive limit of nowhere scattered \ca{s} is nowhere scattered.
    \item Tensor products: Let $A$ be exact, and let $A$ or $B$ be nowhere scattered. Then, $A\otimes_{\rm min} B$ is nowhere scattered.
    \item $C_0(X)$-algebras: A $C_0(X)$-algebra is nowhere scattered if and only if each of its fibers is nowhere scattered.
\end{enumerate}
\end{prp}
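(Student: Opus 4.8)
The plan is to reduce all five assertions to one convenient reformulation: \emph{a \ca{} $A$ is nowhere scattered if and only if no quotient of $A$ contains a nonzero minimal projection}, equivalently, no subquotient $K/L$ of $A$ (with $L\subseteq K\subseteq A$ ideals) is Morita equivalent to $\CC$. For the reformulation: if $p$ is minimal in $A/J$, the ideal it generates is full over $p(A/J)p=\CC p$, hence Morita equivalent to $\CC$ and so isomorphic to $\K(H)$; this is a nonzero elementary ideal-quotient of $A$. Conversely, a nonzero elementary ideal-quotient $\K(H)\subseteq A/J$ is simple with a minimal projection $p$, and $p$ stays minimal in $A/J$ because $\K(H)$ is hereditary. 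With this reformulation the hypotheses become statements about minimal projections in quotients, which are easy to propagate.

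\textbf{(1) and (2).} Morita equivalence gives an isomorphism of ideal lattices under which corresponding subquotients are Morita equivalent, and ``elementary'' means ``Morita equivalent to $\CC$''; this yields (1). For (2), a subquotient of a subquotient is a subquotient, so nowhere scatteredness passes to ideals and quotients, and a hereditary sub-\ca{} $B\subseteq A$ is Morita equivalent to the ideal $\overline{ABA}$ of $A$, hence nowhere scattered by (1). For the two-out-of-three statement, assume $I$ and $A/I$ nowhere scattered and take ideals $L\subseteq K\subseteq A$ with $K/L$ nonzero; the modular law produces a short exact sequence
\[ 0\longrightarrow (K\cap I)/(L\cap I)\longrightarrow K/L\longrightarrow (K+I)/(L+I)\longrightarrow 0, \]
whose outer terms are subquotients of $I$ and of $A/I$, hence not nonzero elementary. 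Since every nonzero elementary \ca{} is simple, were $K/L$ elementary one outer term would vanish and the other would be isomorphic to $K/L$, a contradiction.

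\textbf{(3) and (5).} Write $A=\varinjlim A_i$; replacing each $A_i$ by its image in $A$ (still nowhere scattered by (2)) we may assume the connecting maps are injective. If some subquotient $K/L$ of $A$ were nonzero elementary, then, using that ideals of $\varinjlim A_i$ are the closed unions of their traces on the $A_i$, we would get $K/L=\varinjlim (K\cap A_i)/(L\cap A_i)$, again an inductive limit of nowhere scattered \ca{s} with injective connecting maps, now containing a minimal projection $p$. Approximating $p$ by a self-adjoint element of some $A_i$ and cutting down by functional calculus yields a genuine projection $q\in A_i$ with $\|q-p\|<1$, so $q$ is unitarily equivalent to $p$ in $\widetilde A$; then $qA_iq\subseteq qAq=\CC q$, making $q$ a nonzero minimal projection in $A_i$ and contradicting the reformulation. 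For (5), each fiber of a $C_0(X)$-algebra $A$ is a quotient of $A$, so one direction follows from (2); conversely, fibers of quotients of $A$ are quotients of fibers of $A$, so by the reformulation it suffices to see that a $C_0(Y)$-algebra with a minimal projection $p$ has a fiber with a minimal projection. This holds because the $C_0(Y)$-action carries $p$ into $pAp=\CC p$ and is therefore implemented on $p$ by a character $\mathrm{ev}_y$, so the image of $p$ in $A_y$ is nonzero and again minimal.

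\textbf{(4) and the main obstacle.} For the tensor product I would argue through the Cuntz semigroup, using that nowhere scatteredness of $A$ is equivalent to $\Cu(A)$ being $2$-divisible at every nonzero element, i.e. every nonzero $x\in\Cu(A)$ admits a nonzero $y$ with $y+y\le x$ --- the obstruction in $\Cu(\K(H))$ being the class of a minimal projection; this characterization is developed in \autoref{sec:CuntzSgp}. Granting it, given a nonzero positive $c\in A\otimes_{\rm min}B$ with $A$ exact and nowhere scattered, exactness (via the slice map property, equivalently the identification of the primitive spectrum of $A\otimes_{\rm min}B$ with the product of those of $A$ and $B$) supplies nonzero $a\in A_+$ and $b\in B_+$ with $a\otimes b$ Cuntz subequivalent to $c$; halving $[a]$ in $\Cu(A)$ and tensoring the witnessing relation with $b$ produces a nonzero $y\le[c]$ with $y+y\le[c]$, so $A\otimes_{\rm min}B$ is nowhere scattered. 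The main obstacle is exactly this localization step: for non-exact $A$ the ideal lattice, and hence the Cuntz semigroup, of $A\otimes_{\rm min}B$ can be wild, and it is exactness that forces every nonzero positive element to dominate a nonzero elementary tensor. A secondary point is that the Cuntz-semigroup characterization invoked here is itself a theorem to be established first, whereas (1)--(3) and (5) go through by the elementary arguments above.
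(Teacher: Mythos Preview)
Your arguments for (1)--(3) and (5) are correct and give a self-contained elementary route; since the paper only cites \cite[Section~4]{ThiVil24NowhereScattered} and \cite[Section~2]{EnsVil25arX:Twisted} for this proposition, there is no detailed proof in the text to compare against.

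For (4) there is a genuine gap. The Cuntz semigroup condition you invoke --- every nonzero $x\in\Cu(A)$ admits a nonzero $y$ with $2y\le x$ --- is \emph{not} what \autoref{sec:CuntzSgp} proves. \autoref{thm:EqChar} identifies nowhere scatteredness with \emph{weak $(2,\omega)$-divisibility}, which requires that every $x'\ll x$ be covered by a finite sum $d_1+\cdots+d_n$ of elements satisfying $2d_j\le x$; merely exhibiting one nonzero such $d$ is a formally weaker requirement, and you have not argued the converse. The easy direction (nowhere scattered implies your halving condition) does let you halve $[a]$ inside $\Cu(A)$, but to conclude that $A\otimes_{\min}B$ is nowhere scattered you need the unproved reverse implication. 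A related symptom: Kirchberg's slice lemma --- producing a nonzero elementary tensor $a\otimes b$ in the hereditary subalgebra generated by any nonzero $c$ --- holds for \emph{arbitrary} \ca{s} and does not require exactness (nor is it the same thing as the operator-space ``slice map property'' you mention). Were your characterization valid, your argument would dispense with exactness altogether, contrary to the stated hypothesis. The paper's hinted approach, described just after \autoref{thm:MA}, runs instead through the tracial criterion \autoref{prp:TracCharNSca}: one shows that every dimension function $d_\tau$ on the tensor product sees an elementary tensor with value in $(0,1)$, and exactness is used to relate $d_\tau$ to dimension functions on the individual factors.
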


\begin{rmk}\label{rmk:CX_NSCvsGGP}
    As discussed in \autoref{prp:PermPropGGP}, the Global Glimm Property is also known to satisfy conditions (1)–(3). However, the proofs of these statements are considerably more involved --- so much so that the most efficient proof relies on the Cuntz semigroup characterization presented in \autoref{sec:CuntzSgp}.
    
    Notably, it is not known whether the Global Glimm Property behaves well in the context of bundles. This marks the first instance in these notes of the aforementioned tension between nowhere scatteredness and the Global Glimm Property: while \autoref{prp:PermProp}~(5) follows directly from \autoref{dfn:NowScat}, just establishing that $C_0(X)$-algebras with simple, non-elementary fibers and finite dimensional $X$ satisfy the Global Glimm Property requires deeper techniques \cite[Theorem~4.3]{BlaKir04GlimmHalving}.
\end{rmk}

\autoref{thm:MultChar} below presents a number of equivalent formulations of nowhere scatteredness. While we omit the proof of this result, the most significant characterizations are highlighted in the subsections that follow (\ref{subsec:TopChar} and \ref{subsec:TracChar}), each discussed individually.

\begin{thm}[cf. {\cite[Theorem~B]{ThiVil24NowhereScattered}}]\label{thm:MultChar}
    Let $A$ be a \ca{}. Then, the following are equivalent:
    \begin{itemize}
        \item[(i)] $A$ is nowhere scattered;
        \item[(ii)] every quotient of $A$ is antiliminal, that is, no quotient $A/I$ contains a nonzero positive element $a$ such that $\overline{a(A/I)a}$ is commutative;
        \item[(iii)] no quotient $A/I$ of $A$ contains a \emph{minimal open projection}, that is, no quotient $A/I$ contains a nonzero projection $p$ such that $p(A/I)p=\mathbb{C}p$;
        \item[(iv)] no hereditary sub-\ca{} of $A$ admits a one-dimensional irreducible representation;
        \item[(v)] no hereditary sub-\ca{} of $A$ admits a finite-dimensional irreducible representation;
        \item[(vi)] given any positive functional $\varphi$ on $A$ and any hereditary sub-\ca{} $B$, there exists a Haar unitary in $\tilde{B}$ for $\varphi$.
    \end{itemize}
\end{thm}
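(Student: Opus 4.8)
I would settle the equivalences among (i)--(v) and the reformulation (ii) by elementary manipulations with hereditary sub-\ca{s}, using three standard facts. First, for a hereditary sub-\ca{} $B\subseteq A$ the assignment $J\mapsto\overline{AJA}$ carries ideals of $B$ to ideals of $A$ with $\overline{AJA}\cap B=J$, so $B/J$ embeds as a hereditary sub-\ca{} of $A/\overline{AJA}$. Second, a full hereditary sub-\ca{} of a \ca{} $D$ is Morita equivalent to $D$, and being elementary (i.e.\ $\cong\mathcal{K}(H)$) is a Morita invariant. Third, if $q\colon A\to A/I$ is a quotient map and $a\in A_+$ has image $\bar a$, then $q(\overline{aAa})=\overline{\bar a(A/I)\bar a}$, so every singly generated hereditary sub-\ca{} of a quotient of $A$ lifts to one of $A$. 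I also record the dictionary: a \ca{} has a finite-dimensional (resp.\ one-dimensional) irreducible representation iff it has a quotient isomorphic to some $M_n$ (resp.\ to $\CC$); a projection $p\in A/I$ with $p(A/I)p=\CC p$ makes $\CC p\cong\CC$ a hereditary sub-\ca{} of $A/I$ generating an elementary ideal; and a nonzero commutative hereditary sub-\ca{} $\cong C_0(Y)$ carries point evaluations as one-dimensional irreducible representations.

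The circle then closes quickly. \emph{Not~(i)} produces an ideal $K\cong\mathcal{K}(H)$ of some quotient $A/I$; a rank-one projection $p\in K$ satisfies $p(A/I)p=pKp=\CC p$, which is \emph{not~(iii)}, and $\CC p$ is also a commutative hereditary sub-\ca{} of $A/I$, which by the third fact lifts to a hereditary sub-\ca{} of $A$ mapping onto $\CC$; hence \emph{not~(ii)} and \emph{not~(iv)}. Conversely, a hereditary $B\subseteq A$ with a one-dimensional irreducible representation $\chi$ (that is, \emph{not~(iv)}) gives $B/\ker\chi\cong\CC$ as a hereditary sub-\ca{} of $A/\overline{A(\ker\chi)A}$, which by the first two facts generates an elementary ideal there---so \emph{not~(i)}. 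Next, \emph{not~(ii)}$\Rightarrow$\emph{not~(iv)} is the third fact applied to a commutative $\overline{\bar a(A/I)\bar a}$; \emph{not~(v)}$\Rightarrow$\emph{not~(iv)} is obtained by lifting a one-dimensional corner of $M_n$ along $B\to B/\ker\pi\cong M_n$; and \emph{not~(iii)}$\Rightarrow$\emph{not~(i)} is the third dictionary entry. Thus (i)$\Leftrightarrow$(ii)$\Leftrightarrow$(iii)$\Leftrightarrow$(iv)$\Leftrightarrow$(v).

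For (vi), the forward direction is \emph{not~(v)}$\Rightarrow$\emph{not~(vi)}. Given a hereditary $B\subseteq A$ with a finite-dimensional irreducible representation $\pi\colon B\to M_n$, extend $\pi$ to a unital $*$-homomorphism $\tilde B\to M_n$, fix a pure state $\psi$ of $M_n$, and let $\varphi$ be any positive extension to $A$ of $(\psi\circ\pi)|_B$. For each unitary $u\in\tilde B$ the element $\pi(u)\in M_n$ is a unitary, so $\varphi(u^k)=\psi(\pi(u)^k)$ is the $k$-th Fourier coefficient of the $\psi$-spectral distribution of $\pi(u)$, which is a probability measure supported on the finite spectrum of $\pi(u)$; such a measure cannot equal the non-atomic Haar measure on the circle, so $\varphi(u^k)\neq0$ for some $k\neq0$ and $\tilde B$ has no Haar unitary for $\varphi$. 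For the backward direction, \autoref{prp:PermProp}(2) shows $B$ is nowhere scattered, so, after replacing $A$ by $B$ and $\varphi$ by its restriction, it suffices to prove that a nowhere scattered \ca{} $B$ has, for every positive functional, a Haar unitary in $\tilde B$; and one first notes that nowhere scatteredness makes every nonzero representation of $B$ infinite-dimensional, since the image of a finite-dimensional representation would be a finite-dimensional \ca{} with a finite-dimensional irreducible representation, contradicting the nowhere scatteredness of $B$.

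This last step is the one I expect to be the main obstacle. Having ``room'' in the GNS representation of $\varphi$ does not by itself produce the unitary inside the von Neumann algebra $\pi_\varphi(B)''$, which may have minimal projections (for instance when $\varphi$ is pure), and in any case the unitary must lie in $\tilde B$. Instead I would build it at the \ca{} level by a Glimm-type halving induction: exploiting that $B$ and all its hereditary sub-\ca{s} are antiliminal (equivalently, have no one-dimensional irreducible representation), refine at each stage an approximately uniform finite configuration in $\tilde B$ into a finer one while keeping $\varphi$ under control, working inside a decreasing chain of hereditary sub-\ca{s} so that the successive corrections telescope, and then pass to a limit. The delicate part---and, I expect, the real content of the theorem---is to arrange that the limit is an honest unitary whose $\varphi$-distribution is \emph{exactly} Haar rather than merely approximately so; this is presumably why the survey postpones the proof until after \autoref{sec:CuntzSgp}, where nowhere scatteredness turns into a clean halving statement in the Cuntz semigroup that supplies precisely this exactness.
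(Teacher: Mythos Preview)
The paper does not prove \autoref{thm:MultChar}: immediately before the statement it says ``we omit the proof of this result'', citing \cite[Theorem~B]{ThiVil24NowhereScattered}, and it only elaborates on a few of the characterizations in the subsections that follow. In particular, your guess that ``the survey postpones the proof until after \autoref{sec:CuntzSgp}'' is a misreading---the Haar-unitary implication is never established anywhere in the survey, and nothing in \autoref{sec:CuntzSgp} is invoked for it.

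Your cycle through (i)--(v) is correct and essentially matches the elementary manipulations one finds in \cite{ThiVil24NowhereScattered}; the three ``standard facts'' you record are exactly the ingredients needed, and the contrapositive chase closes cleanly. Your argument for \emph{not~(v)}$\Rightarrow$\emph{not~(vi)} is also fine, modulo the small bookkeeping point of how a positive functional on $A$ is to be evaluated on elements of $\tilde{B}$ (one extends $\varphi|_B$ canonically to $\tilde{B}$).

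The genuine gap is exactly where you flag it: (i)$\Rightarrow$(vi). You correctly diagnose that infinite-dimensionality of the GNS representation is insufficient, and that a Glimm-type halving inside a decreasing chain of hereditary sub-\ca{s} is the right strategy, but you stop at an outline. The construction you sketch is essentially the main theorem of \cite{Thi20arX:diffuseHaar}, and it is genuinely delicate: one has to run an inductive subdivision in which the moments $\varphi(u^k)$ are driven to zero while the approximants are arranged to converge to an honest unitary in $\tilde{B}$, not merely an element of norm one. That argument is carried out in \cite{Thi20arX:diffuseHaar} (and summarized in \cite{ThiVil24NowhereScattered}), not in the survey, and it does not reduce to a clean Cuntz-semigroup halving statement in the way you suggest. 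So your instincts about the shape of the proof are right, but as written your proposal leaves the substantive half of the equivalence with (vi) unproved.
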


\begin{rmk}
    The concept of nowhere scatteredness had appeared implicitly in the literature long before \cite{ThiVil24NowhereScattered}. For example, it can be found in Kirchberg's and R\o{}rdam's study of pure infiniteness \cite[Lemma~4.14]{KirRor02InfNonSimpleCalgAbsOInfty} (in the form of \autoref{thm:MultChar}~(v)), as well as in Tikuisis' and Winter's introduction of \cite{TikWin14DecompRank}.
    
    Blanchard and Kirchberg were the first to formally name the property, calling it \emph{strict anti-liminality} in \cite[Remark~3.2]{BlaKir04PureInf} (phrasing it as \autoref{thm:MultChar}~(ii)). The name nowhere scatteredness appeared in \cite{Thi20arX:diffuseHaar}, where a positive functional $\varphi$ is said to be \emph{nowhere scattered} if it satisfies \autoref{thm:MultChar}~(vi)\footnote{Although this is not how nowhere scatteredness is formally defined for functionals, it is equivalent by \cite[Theorem~4.11]{Thi20arX:diffuseHaar}.}. In other words, a \ca{} is nowhere scattered if and only if all of its positive functionals are.
    
    A systematic study of the notion was not conducted until \cite{ThiVil24NowhereScattered}, where the name \emph{nowhere scatteredness} was kept for \ca{s} not only because of its relation to nowhere scattered positive functionals, but also to emphasize the stark contrast that it has with the already established concept of \emph{scatteredness} in both topology and \ca{s}; see \autoref{subsec:TopChar} below.
\end{rmk}

\subsection{\texorpdfstring{Topological spaces and scattered \ca{s}}{Topological spaces and scattered C*-algebras}}\label{subsec:TopChar}
A topological space is said to be \emph{perfect} if it contains no isolated points. Scatteredness is a classical notion in topology, which is at the extreme opposite of being perfect. The definition is as follows: 

\begin{dfn}
    A topological space $X$ is \emph{scattered} if every nonempty closed subset of $X$ contains an isolated point in the induced topology of the subset.
\end{dfn}

Scattered spaces have been studied extensively in topology as generalizations of discrete spaces. The simplest example of a space which is scattered but not discrete is the Sierpiński space, that is, the space consisting of two points $\{x,y\}$ with open sets $\emptyset$, $\{ x\}$ and $\{x,y\}$.

\begin{rmk}
    Let $X$ be a compact Hausdorff space. Then, $X$ is scattered if and only if every nonzero quotient of $C(X)$ contains a minimal open projection. Indeed, assume first that $X$ is scattered. Then, since any nonzero quotient of $C(X)$ is of the form $C(K)$ for some nonempty closed subset $K\subseteq X$, one can set the minimal open projection $p$ to be the indicator function of the isolated point in $K$. Conversely, a minimal open projection on $C(Y)$ for a compact Hausdorff space $Y$ is always an indicator function of an isolated point. This gives the desired equivalence.

    In general, a \ca{} is said to be \emph{scattered} if every nonzero quotient contains a minimal open projection. These algebras were introduced in \cite[Definition~2.1]{Jen77ScatteredCAlg} as those algebras whose each state is atomic. The equivalence with the definition given here is from \cite[Theorem~1.4]{GhaKos18NCCantorBendixson}.
\end{rmk}

As seen in \autoref{thm:MultChar}~(iii), nowhere scattered \ca{s} should be thought of as the complete opposite of scattered algebras. This viewpoint can also be observed from a purely topological perspective:

\begin{dfn}
    A topological space $X$ is \emph{nowhere scattered} if every closed subset of $X$ is perfect, that is, if no closed subset of $X$ contains an isolated point in the induced topology of the subset.
\end{dfn}

By definition, no point in a nowhere scattered space is closed. Thus, the examples of such spaces are highly non-Hausdorff.

\begin{exas} $ $
    \begin{enumerate}
        \item Consider the space $X=\mathbb{R}$ with the usual topology and $Y=\mathbb{R}$ with the trivial topology. Then, the space $X\times Y$ with the product topology (that is, the topology where the open subsets are bands) is nowhere scattered. One can also show that it is not a $T_0$-space.
        \item Let $X=[-1,1]$ and equip it with the \emph{overlapping interval topology}, that is, the topology with open basis given by the intervals $\{ (x,1]\mid x<0\}\cup \{ [-1,y)\mid y>0\}$. Then, $X$ is both $T_0$ and nowhere scattered.
    \end{enumerate}
\end{exas}

\begin{thm}[{\cite[Theorem~5.3]{ThiVil24NowhereScattered}}]\label{thm:TopCharNSCa}
    Let $A$ be a separable \ca{}. Then, $A$ is nowhere scattered if and only if $\widehat{A}$ is.
\end{thm}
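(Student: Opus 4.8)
The approach is to translate both conditions through the standard dictionary between the (closed two-sided) ideals of a \ca{} and the topology of its spectrum, and then to isolate the one representation-theoretic input where separability is genuinely needed.

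\emph{Reformulating both sides.} Recall (see, e.g., \cite{Bla06OpAlgs}) that $I\mapsto \widehat I:=\{[\pi]\in\widehat A : \pi|_I\neq 0\}$ is an order isomorphism from the ideals of $A$ onto the open subsets of $\widehat A$; consequently the closed subsets of $\widehat A$ are exactly the sets $\widehat{A/J}=\{[\pi]: J\subseteq\ker\pi\}$ with $J$ an ideal of $A$, and for each ideal $I$ the natural map identifies $\widehat I$, with its subspace topology, with the spectrum of the \ca{} $I$. Unwinding \autoref{dfn:NowScat}, and using that the ideals of $A/J$ are precisely the $I/J$ for ideals $J\subseteq I$ of $A$, one sees that $A$ is nowhere scattered if and only if no quotient $A/J$ possesses a nonzero elementary ideal. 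On the topological side, $\widehat A$ is nowhere scattered if and only if $\widehat{A/J}$ has no isolated point for every ideal $J$. Thus the theorem reduces to a single assertion about the separable \ca{s} $B=A/J$: $B$ has a nonzero elementary ideal if and only if $\widehat B$ has an isolated point.

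\emph{The key lemma.} The implication $(\Rightarrow)$ needs no separability: if $I\cong\K(H)$ is a nonzero elementary ideal of $B$, then $\widehat I$ is a single point, open in $\widehat B$, so $\widehat B$ has an isolated point. For $(\Leftarrow)$, let $x\in\widehat B$ be isolated. Then $\{x\}$ is open, hence $\{x\}=\widehat I$ for a (unique) ideal $I$ of $B$, so $I$ has exactly one irreducible representation up to unitary equivalence. Since the intersection of the primitive ideals of a \ca{} is $0$, that representation is faithful; thus $\mathrm{Prim}(I)$ is a single point and $I$ is simple. Now $I$ is a \emph{separable} simple \ca{} with a unique irreducible representation, so Glimm's theorem \cite{Gli61Type1} forces $I$ to be of type I (a separable \ca{} that is not of type I has uncountably many pairwise inequivalent irreducible representations), and a simple type I \ca{} is elementary. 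Hence $I\cong\K(H)$ is a nonzero elementary ideal of $B$.

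\emph{Conclusion and main obstacle.} Chaining the equivalences: $A$ is nowhere scattered $\iff$ no quotient $A/J$ has a nonzero elementary ideal $\iff$ (applying the key lemma to each separable quotient $A/J$) $\widehat{A/J}$ has no isolated point for every ideal $J$ $\iff$ $\widehat A$ is nowhere scattered. Everything outside the forward direction of the key lemma is routine bookkeeping with the ideal/spectrum correspondence; the real content is the step $\widehat B$ has an isolated point $\Rightarrow$ $B$ has an elementary ideal, and specifically the appeal to Glimm's dichotomy to exclude non-type-I behaviour. This is exactly where separability of $A$ is indispensable: in general a simple \ca{} with a single irreducible representation need not be elementary. (One could alternatively run the same argument through the minimal projections of \autoref{thm:MultChar}~(iii), which correspond to isolated points of the spectrum, but the route above is more direct.)
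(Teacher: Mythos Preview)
Your proof is correct, but it follows a genuinely different route from the paper's own argument. The paper reformulates topological nowhere scatteredness as the absence of nonempty scattered \emph{locally closed} subsets, matches these with ideal-quotients of $A$, and then invokes Jensen's theorem \cite{Jen78ScatteredCAlg2} that a separable \ca{} is scattered if and only if its spectrum is; the Claim that $A$ is nowhere scattered precisely when no nonzero ideal-quotient is scattered then closes the loop. You instead work only with \emph{closed} subsets (quotients), reduce everything to the single pointwise statement ``$\widehat B$ has an isolated point $\iff$ $B$ has a nonzero elementary ideal,'' and settle the nontrivial direction by applying Glimm's dichotomy directly to the simple separable ideal corresponding to the isolated point. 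Your approach avoids the detour through scattered \ca{s} and the black-box citation of \cite{Jen78ScatteredCAlg2}, at the cost of unpacking the type~I input (Glimm's theorem) explicitly; the paper's version is terser but hides the same representation-theoretic content inside Jensen's result. Both arguments use separability in exactly the same place --- to rule out a non-type-I simple \ca{} with a one-point spectrum --- and you identify this correctly.
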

\begin{proof}
    It is readily checked that a topological space is nowhere scattered if and only if it has no nonempty, scattered, locally closed subsets. Further, recall that locally closed subsets of $\widehat{A}$ correspond to ideal-quotients of $A$. Paired with the fact that a separable \ca{} is scattered if and only if $\widehat{A}$ is scattered~\cite[Corollary~3]{Jen78ScatteredCAlg2}, the result follows from the following Claim.

    \textbf{Claim.} \emph{A \ca{} is nowhere scattered if and only if none of its nonzero ideal-quotients is scattered.}

    To prove the Claim, assume first that $A$ is nowhere scattered. Then, it follows from \autoref{prp:PermProp} that all ideal-quotients are nowhere scattered as well. Thus, they are not scattered unless they are zero.

    For the converse, assume for the sake of contradiction that $A$ is not nowhere scattered. By definition, $A$ has a nonzero elementary ideal-quotient. However, elementary \ca{s} are scattered, which contradicts our assumption.
\end{proof}

\subsection{A `tracial' characterization}\label{subsec:TracChar}
We now present the characterization of nowhere scatteredness that has proven most useful when verifying the property. This can be viewed as a `tracial' characterization, in so far that nowhere scatteredness can be detected by examining the behavior of dimension functions. To set the stage, we begin by briefly recalling the concepts of Cuntz subequivalence and quasitraces.

Given positive elements $a,b$ in a \ca{} $A$, we write $a\precsim b$ whenever $a=\lim_n r_nbr_n^*$ for some sequence $(r_n)_n\subseteq A$. We also write $a\sim b$ whenever $a\precsim b$ and $b\precsim a$. These relations were introduced by Cuntz in \cite{Cun78DimFct}, and are nowadays known as \emph{Cuntz subequivalence} and \emph{equivalence} respectively.

Throughout the text we will use the following properties of $\precsim$. The reader is referred to \cite{GarPer23arX:ModernCu} for a proof of these statements\footnote{Items (1) and (2) of \autoref{prp:CuSubProp} are often included as part of what is known as `R\o{}rdam's lemma'.}.

\begin{lma}\label{prp:CuSubProp}
    Let $A$ be a \ca{} and let $a,b\in A_+$. Then,
    \begin{enumerate}
        \item $a\precsim b$ whenever $(a-\varepsilon)_+\precsim b$ for every $\varepsilon>0$.
        \item if $a\precsim b$, then for every $\varepsilon>0$ there exists $\delta >0$ and $r\in A$ such that $(a-\varepsilon)_+=r(b-\delta)_+r^*$.
        \item $a\precsim b$ whenever $a\in\overline{bAb}$.
        \item $(a-\varepsilon)_+\precsim b$ whenever $\Vert a-b\Vert <\varepsilon$.
        \item ${\rm diag}(a+b,0)\precsim {\rm diag}(a,b)$ in $M_2(A)$ always, and ${\rm diag}(a,b)\precsim {\rm diag}(a+b,0)$ whenever $a\perp b$.
    \end{enumerate}
\end{lma}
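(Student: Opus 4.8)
The plan is to treat the three ``soft'' items~(1),~(3),~(5) by direct functional calculus, and to reduce the substantial items~(4) and~(2) to R\o{}rdam's lemma, whose exact form will be the main obstacle.

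For~(1): given $\gamma>0$, pick $\varepsilon<\gamma/2$; using $\|a-(a-\varepsilon)_+\|\le\varepsilon$ and the hypothesis $(a-\varepsilon)_+\precsim b$, choose $r\in A$ with $\|(a-\varepsilon)_+-rbr^*\|<\gamma/2$, so that $\|a-rbr^*\|<\gamma$; as $\gamma$ was arbitrary, $a\precsim b$. For~(3): since $\overline{bAb}$ is a hereditary sub-\ca{} containing the positive element $a$, one has $a^{1/2}\in\overline{bAb}\subseteq\overline{Ab}$ (because $bxb=(bx)b$), so $a^{1/2}=\lim_n s_nb$ for suitable $s_n\in A$; then $a=a^{1/2}(a^{1/2})^*=\lim_n(s_nb^{1/2})\,b\,(s_nb^{1/2})^*$, which gives $a\precsim b$. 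For~(5): in $M_2(A)$, taking an approximate unit $(e_n)_n$ of $A$ and $r_n=\begin{psmallmatrix}e_n&e_n\\0&0\end{psmallmatrix}$, one has $r_n\,\mathrm{diag}(a,b)\,r_n^*=\mathrm{diag}(e_n(a+b)e_n,0)\to\mathrm{diag}(a+b,0)$, proving the first subequivalence; for the second, if $a\perp b$ then $a^{1/2}b^{1/2}=0$, so $v=\begin{psmallmatrix}a^{1/2}&0\\b^{1/2}&0\end{psmallmatrix}\in M_2(A)$ satisfies $v\,\mathrm{diag}(a+b,0)\,v^*=\mathrm{diag}(a^2,b^2)$ by a one-line computation, and since $\mathrm{diag}(a,b)\sim\mathrm{diag}(a^2,b^2)$ (equivalence of powers) we get $\mathrm{diag}(a,b)\precsim\mathrm{diag}(a+b,0)$.

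For~(4), write $\beta=\|a-b\|<\varepsilon$, so $a-\beta\, 1\le b$ in $\widetilde A$, and pick a continuous $f\colon[0,\infty)\to[0,1]$ with $f\equiv0$ on $[0,\beta]$ and $f\equiv1$ on $[\varepsilon,\infty)$. The scalar inequality $(t-\varepsilon)_+\le f(t)^2(t-\beta)$ holds for all $t\ge0$, so functional calculus yields
\[
0\le(a-\varepsilon)_+\le f(a)(a-\beta\, 1)f(a)\le f(a)\,b\,f(a).
\]
As $f(a)bf(a)=(f(a)b^{1/2})(f(a)b^{1/2})^*\sim b^{1/2}f(a)^2b^{1/2}\le b$ gives $f(a)bf(a)\precsim b$, and since $0\le x\le y$ yields $x\in\overline{yAy}$ and hence $x\precsim y$ by~(3), this chains to $(a-\varepsilon)_+\precsim b$. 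For~(2), I would first prove the version with $\precsim$ in place of equality: given $a\precsim b$ and $\varepsilon>0$, choose $x$ with $\|a-xbx^*\|<\varepsilon/2$; then $\|a-x(b-\delta)_+x^*\|<\varepsilon$ whenever $\delta<\varepsilon/(2\|x\|^2)$, so by~(4) and $zcz^*\precsim c$ we obtain $(a-\varepsilon)_+\precsim(b-\delta)_+$. Applying this with $\varepsilon/2$ in place of $\varepsilon$ produces $\delta>0$ and $z\in A$ with $\|(a-\varepsilon/2)_+-z(b-\delta)_+z^*\|<\varepsilon/2$, and then the \emph{exact} form of R\o{}rdam's lemma --- if $\|p-q\|<\gamma$ then $(p-\gamma)_+=dqd^*$ for some $d\in A$ --- applied to $p=(a-\varepsilon/2)_+$, $q=z(b-\delta)_+z^*$ and $\gamma=\varepsilon/2$, together with the identity $((a-\varepsilon/2)_+-\varepsilon/2)_+=(a-\varepsilon)_+$, yields $(a-\varepsilon)_+=(dz)(b-\delta)_+(dz)^*$, so $r=dz$ works.

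The hard part will be the exact form of R\o{}rdam's lemma used in~(2): producing a genuine $d\in A$ --- not merely an element of $A^{**}$ --- with $(p-\gamma)_+=dqd^*$. The functional-calculus heart is the estimate $q_0qq_0\ge(\gamma-\|p-q\|)q_0$, where $q_0$ is the range projection of $(p-\gamma)_+$ in $A^{**}$; it commutes with $p$ and satisfies $q_0pq_0\ge\gamma q_0$, while $q\ge p-\|p-q\|\,1$. Hence $q_0qq_0$ is invertible in $q_0A^{**}q_0$, and $d_0:=(p-\gamma)_+^{1/2}(q_0qq_0)^{-1/2}$ satisfies $d_0qd_0^*=(p-\gamma)_+$. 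The delicate step --- and, for me, the crux of the whole proof --- is to approximate $d_0$ by elements of $A$ without destroying this identity; this is also the point where the constants in~(2) must be arranged so that the small loss from $(a-\varepsilon/2)_+$ down to $(a-\varepsilon)_+$ can be absorbed. Everything else is routine functional calculus.
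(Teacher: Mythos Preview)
The paper does not actually prove this lemma: immediately before the statement it says ``The reader is referred to \cite{GarPer23arX:ModernCu} for a proof of these statements,'' and no argument is given. So there is nothing to compare against in the paper itself.

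Your proposal is correct and follows the standard route one finds in the literature (e.g.\ the reference the paper cites, or R\o{}rdam's original papers). Items~(1),~(3),~(5) are handled cleanly; your proof of~(4) via the scalar inequality $(t-\varepsilon)_+\le f(t)^2(t-\beta)$ is the usual one; and your reduction of~(2) to the ``exact'' R\o{}rdam lemma, together with the identity $((a-\varepsilon/2)_+-\varepsilon/2)_+=(a-\varepsilon)_+$, is exactly how this is normally done. One small clarification in~(2): when you write ``Applying this with $\varepsilon/2$ \ldots\ produces $\delta>0$ and $z\in A$ with $\|(a-\varepsilon/2)_+-z(b-\delta)_+z^*\|<\varepsilon/2$,'' you are really using two steps --- first the weak version gives $(a-\varepsilon/2)_+\precsim(b-\delta)_+$ for some $\delta$, and then the \emph{definition} of $\precsim$ gives the approximating $z$ --- but the logic is fine.

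On the point you flag as the crux: the passage from $d_0\in A^{**}$ to $d\in A$ in R\o{}rdam's lemma is indeed the only nontrivial step, and your outline is on the right track. The standard way to finish is to observe that $(p-\gamma)_+^{1/2}=h(p)$ for a continuous function $h$ vanishing on $[0,\gamma]$; writing $h(p)=h(p)g(p)$ with $g$ continuous, $g\equiv1$ on $[\gamma,\infty)$ and $g\equiv0$ near $[0,\|p-q\|]$, one checks that $g(p)(q_0qq_0)^{-1/2}$ lies in $A$ (it equals $g(p)$ times a norm-convergent power series in $q_0qq_0$, and $g(p)q_0=g(p)$ while $g(p)q\in A$). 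This gives $d=h(p)\cdot g(p)(q_0qq_0)^{-1/2}\in A$ with $dqd^*=(p-\gamma)_+$. So the obstacle you anticipate is real but surmountable with exactly the functional-calculus bookkeeping you describe.
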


\begin{rmk}\label{rmk:NtnDiagSum}
    Following standard notation, we will henceforth use $\oplus$ to denote diagonal addition of elements in a \ca{}. For instance, the first subequivalence in (5) above may be expressed as $(a+b)\oplus0\precsim a\oplus b$. It is also customary to omit diagonal addition by 0, allowing us to write this subequivalence simply as $a+b\precsim a\oplus b$, with the  understanding that elements of $A_+$ are embedded in $M_2(A)_+$ via the map $a\mapsto a\oplus 0$.
\end{rmk}

Let $\tau\colon A_+\to [0,\infty]$ be a lower semicontinuous $2$-quasitrace on $A$ \cite[Definition~II.1.1]{BlaHan82DimFct}\footnote{One of the longest standing problems in the theory of \ca{s} is determining if every $2$-quasitrace is, in fact, a $[0,\infty]$-valued trace. Haagerup \cite{Haa14Quasitraces} famously showed that this is indeed true whenever the \ca{} is exact.}. We denote by $d_\tau$ its associated \emph{dimension function}, given by $d_\tau (a):=\lim_n \tau (a^{1/n})$. One can show that $d_\tau$ is additive on orthogonal pairs and satisfies $d_\tau (a)\leq d_\tau (b)$ whenever $a\precsim b$. Another important property about dimension functions that we will use repeatedly is \autoref{prp:DFnotStb} below, which we record here for future use. We refer the reader to \cite{BlaHan82DimFct} and \cite[Section~6]{GarPer23arX:ModernCu} for an in-depth discussion on these functions and their relation to functionals on the Cuntz semigroup.

\begin{lma}\label{prp:DFnotStb}
    Let $A$ be a \ca{} and let $d_\tau$ be a dimension function on its stabilization $A\otimes \mathcal{K}$. Assume that $d_\tau (b)\neq 0$ for some $b\in (A\otimes\mathcal{K})_+$. Then, there exists $a\in A_+$ such that $a\precsim b$ and $d_\tau (a)\neq 0$.
\end{lma}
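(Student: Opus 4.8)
The plan is to transport $b$ first into a finite matrix algebra $M_n(A)$ over $A$, and then down to one of its diagonal entries, keeping the dimension function away from $0$ throughout.

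For the first step, the lower semicontinuity of $\tau$ gives $d_\tau(b)=\sup_{\varepsilon>0}d_\tau\big((b-\varepsilon)_+\big)$, so I would fix $\varepsilon>0$ with $d_\tau\big((b-\varepsilon)_+\big)\neq0$. Since $A\otimes\K$ is the closure of the increasing union of the matrix algebras $M_n(A)$, its positive elements are norm-approximated by positive elements of these subalgebras; I pick $b'\in M_n(A)_+$ (for some $n$) with $\|b-b'\|<\varepsilon/2$ and put $b'':=(b'-\varepsilon/2)_+\in M_n(A)_+$. Then $\|b'-b\|<\varepsilon/2$ gives $b''\precsim b$ by \autoref{prp:CuSubProp}\,(4), while $\|b-b''\|\le\|b-b'\|+\|b'-b''\|<\varepsilon$ gives $(b-\varepsilon)_+\precsim b''$, again by \autoref{prp:CuSubProp}\,(4); hence $d_\tau(b'')\ge d_\tau\big((b-\varepsilon)_+\big)\neq0$.

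For the second step, write $b''=(b''_{ij})_{i,j=1}^n$ with entries in $A$. Conjugating $b''$ by the diagonal $\pm1$-unitaries $u_\sigma=\mathrm{diag}(\sigma_1,\dots,\sigma_n)$, $\sigma\in\{\pm1\}^n$, and summing over $\sigma$ annihilates the off-diagonal entries (each appears with total coefficient $\sum_\sigma\sigma_i\sigma_j=0$) and scales the diagonal by $2^n$, so $\sum_\sigma u_\sigma b''u_\sigma^*=2^n\big(b''_{11}\oplus\cdots\oplus b''_{nn}\big)$; since each summand is positive and the $\sigma=(1,\dots,1)$ term is $b''$, we get $b''\le 2^n\big(b''_{11}\oplus\cdots\oplus b''_{nn}\big)$, hence $b''\precsim b''_{11}\oplus\cdots\oplus b''_{nn}$. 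As $d_\tau$ is order-preserving for $\precsim$ and additive on orthogonal elements,
\[
0\neq d_\tau(b'')\ \le\ d_\tau\big(b''_{11}\oplus\cdots\oplus b''_{nn}\big)\ =\ \sum_{i=1}^n d_\tau(b''_{ii}),
\]
so $d_\tau(b''_{jj})\neq0$ for some $j$. Setting $a:=b''_{jj}$ — a diagonal entry of a positive matrix, hence an element of $A_+$ — we have $a\precsim b''\precsim b$, since $a$ placed in the $(1,1)$-corner equals $e_{1j}b''e_{j1}\sim(b'')^{1/2}e_{jj}(b'')^{1/2}\le b''$; and $d_\tau(a)\neq0$. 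This $a$ is as required.

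I do not expect a genuine obstacle: the lemma is a technical statement combining \autoref{prp:CuSubProp} with the elementary properties of dimension functions. The only point that needs care is the first step, where the approximant must be simultaneously pushed below $b$ in the Cuntz order and kept of nonzero dimension; this is exactly why one invokes the lower semicontinuity of $\tau$ (to obtain an $\varepsilon$ with $d_\tau((b-\varepsilon)_+)\neq0$) and works with $(b'-\varepsilon/2)_+$ rather than with $b'$ itself.
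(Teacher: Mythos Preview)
The paper records \autoref{prp:DFnotStb} without proof (it is stated ``for future use'' with a reference to the general theory of dimension functions), so there is nothing to compare your argument against. Your proof is correct and self-contained: the first step---cutting $b$ down by $\varepsilon$, approximating in $M_n(A)$, and using \autoref{prp:CuSubProp}\,(4) twice to sandwich the approximant between $(b-\varepsilon)_+$ and $b$---is the standard maneuver, and the second step---averaging over the diagonal sign unitaries to dominate $b''$ by its diagonal, then picking a diagonal entry with nonzero dimension---is a clean way to descend to $A_+$. The only cosmetic point worth noting is that your unitaries $u_\sigma$ and matrix units $e_{ij}$ live in $M_n(\tilde A)$ (or the multiplier algebra) rather than $M_n(A)$ when $A$ is non-unital; this is harmless since the elements being compared all lie in $M_n(A)$, but it may be worth a word.
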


The equivalence between (i) and (ii) in the following proposition appears in \cite[Proposition~13.6]{GarPer23arX:ModernCu}, although one can also prove it directly from the characterizations given in \autoref{thm:MultChar}. We sketch here a slightly different proof of the whole result.

\begin{prp}\label{prp:TracCharNSca}
    Let $A$ be a stable \ca{}. Then, the following are equivalent
    \begin{itemize}
        \item[(i)] $A$ is nowhere scattered;
        \item[(ii)] no dimension function $d_\tau$ satisfies $d_\tau (A_+) = \{0,1,2,\ldots,\infty\}$;
        \item[(iii)] for every $2$-quasitrace $\tau$ on $A$, either $d_\tau (A_+)\subseteq\{0,\infty\}$ or there exists $c\in A_+$ such that $0<d_\tau (c)<1$.
    \end{itemize}
\end{prp}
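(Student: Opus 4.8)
The plan is to establish the cycle (i)$\Rightarrow$(iii)$\Rightarrow$(ii)$\Rightarrow$(i). For the implication (iii)$\Rightarrow$(ii), suppose some dimension function $d_\tau$ satisfied $d_\tau(A_+)=\{0,1,2,\ldots,\infty\}$. Then $d_\tau$ takes the value $1$ but never a value strictly between $0$ and $1$, and since it also takes values other than $0$ and $\infty$, condition (iii) fails for this $\tau$; this is a direct contrapositive argument. For (ii)$\Rightarrow$(i), I would argue the contrapositive: if $A$ is not nowhere scattered, then by \autoref{thm:MultChar}~(iii) (applied to a quotient) there is a quotient $A/I$ containing a minimal open projection $p$, so $p(A/I)p=\CC p$. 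Pulling this back along $A\to A/I$ and using that $A$ is stable, one produces inside $A\otimes\Cpct$ (i.e.\ inside $A$) a `system of matrix units' structure detecting $p$; concretely, the quotient map induces a $2$-quasitrace $\tau$ on $A$ built from a state of $A/I$ that is $1$ on $p$ and factors through the corner, so that $d_\tau$ takes exactly the values $0,1,2,\ldots,\infty$ --- the positive integers arising from orthogonal sums of copies of $p$ (possible since $A$ is stable), the value $\infty$ from an infinite orthogonal sum, and no intermediate values because the corner at $p$ is one-dimensional. I should be a little careful here to invoke \autoref{prp:DFnotStb} to move between $A$ and $A\otimes\Cpct$ and to see that no element of $A_+$ can have fractional $d_\tau$-value.

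The substantive implication is (i)$\Rightarrow$(iii). Fix a $2$-quasitrace $\tau$ with $d_\tau(A_+)\not\subseteq\{0,\infty\}$, so there is $b\in A_+$ with $0<d_\tau(b)<\infty$. I want to find $c\in A_+$ with $0<d_\tau(c)<1$. The idea is to use nowhere scatteredness to repeatedly `halve' the support of such an element. Pick $\varepsilon>0$ small enough that $d_\tau((b-\varepsilon)_+)>0$, and set $B:=\overline{(b-\varepsilon/2)_+\,A\,(b-\varepsilon/2)_+}$, a hereditary sub-\ca{} of $A$. Since $A$ is nowhere scattered, so is $B$ (\autoref{prp:PermProp}~(2)). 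Now I would use the standard consequence of nowhere scatteredness --- essentially \autoref{thm:MultChar}~(iv)/(v), or rather the Cuntz-semigroup-level `$\sigma$-halving' it yields --- to produce, inside $B$ (or a hereditary sub-\ca{} thereof), two nonzero mutually orthogonal positive elements $c_1\perp c_2$ with $c_1\sim c_2$ and $c_1\oplus c_2\precsim b$. Then $2\,d_\tau(c_1)=d_\tau(c_1)+d_\tau(c_2)=d_\tau(c_1\oplus c_2)\le d_\tau(b)$. Iterating this halving $n$ times yields a nonzero $c^{(n)}$ with $2^n d_\tau(c^{(n)})\le d_\tau(b)<\infty$, hence $d_\tau(c^{(n)})\le 2^{-n}d_\tau(b)$; choosing $n$ with $2^{-n}d_\tau(b)<1$ gives the desired $c$, provided $d_\tau(c^{(n)})>0$.

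The main obstacle --- and the point requiring the most care --- is exactly the halving step: I need to ensure that when I split $c$ into two orthogonal Cuntz-equivalent pieces, those pieces remain $d_\tau$-positive, i.e.\ that the halving can be performed without collapsing the dimension to $0$. Nowhere scatteredness gives me orthogonal nonzero positive elements $c_1,c_2$ that are Cuntz-below a cut-down of $c$, but a priori a nonzero positive element could have $d_\tau$-value $0$ for this particular $\tau$. The fix is to work not with $c$ itself but with a cut-down $(c-\delta)_+$ for suitable $\delta>0$ (so that $d_\tau((c-\delta)_+)$ is still close to $d_\tau(c)$, using that $d_\tau(c)=\sup_\delta d_\tau((c-\delta)_+)$) and then halve \emph{that}; the key is that the construction underlying \autoref{thm:MultChar}~(v) (finding an approximately full square-zero element, or equivalently a Haar unitary for $\varphi$ in the relevant hereditary sub-\ca{}, via \autoref{thm:MultChar}~(vi)) can be arranged so that the two halves each dominate $(c-\delta')_+$ for some $\delta'$, whence they have $d_\tau$-value $\ge d_\tau((c-\delta')_+)>0$. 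Making this quantitative bookkeeping precise --- keeping every halved piece uniformly bounded away from Cuntz-zero relative to $\tau$ --- is where the real work lies; everything else is formal manipulation of dimension functions together with \autoref{prp:CuSubProp} and \autoref{prp:DFnotStb}.
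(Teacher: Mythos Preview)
Your implication (iii)$\Rightarrow$(ii) is fine, and your sketch for (ii)$\Rightarrow$(i) is close in spirit to the paper's (which builds the bad dimension function from the rank map on an elementary ideal-quotient $I/J\cong\Cpct(H)$, then extends by $\infty$ outside $I$).

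The real problem is in (i)$\Rightarrow$(iii). Your halving scheme is sound \emph{in outline}, but the fix you propose for keeping the halves $d_\tau$-positive does not work, and in fact smuggles in the very open problem the paper is about. You write that the construction ``can be arranged so that the two halves each dominate $(c-\delta')_+$.'' There are two readings, both fatal:
\begin{itemize}
\item If ``dominate'' means Cuntz-dominate, i.e.\ $(c-\delta')_+\precsim c_i$, then $d_\tau(c_i)\geq d_\tau((c-\delta')_+)$ together with $2d_\tau(c_1)\leq d_\tau(c)$ forces $d_\tau(c)\leq 2\bigl(d_\tau(c)-d_\tau((c-\delta')_+)\bigr)$, which is absurd for small $\delta'$ unless $d_\tau(c)\in\{0,\infty\}$. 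So such halves simply cannot exist.
\item If ``dominate'' means ideal containment, i.e.\ $(c-\delta')_+\lhd c_i$, then you are asking for an approximately full square-zero element in $\overline{cAc}$. That is precisely the Global Glimm Property, which is \emph{not} known to follow from nowhere scatteredness---indeed, this is exactly \autoref{qst:GGP}. (Your parenthetical ``finding an approximately full square-zero element'' as a consequence of \autoref{thm:MultChar}~(v) is the point of confusion: nowhere scatteredness gives nonzero square-zero elements in every hereditary sub-\ca{}, but not approximately full ones.)
\end{itemize}

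The paper sidesteps the positivity issue entirely by first passing to the quotient by the ideal $J=\{a\in A_+\mid d_\tau(a)=0\}$, so that the induced dimension function is \emph{faithful} on $A/J$ (still nowhere scattered by \autoref{prp:PermProp}). With faithfulness in hand, your halving argument would actually go through cleanly: any nonzero square-zero $r\in\overline{c(A/J)c}$ gives orthogonal nonzero $rr^*\sim r^*r$ with $2d_\tau(rr^*)\leq d_\tau(c)$, and iteration produces $c$ with $0<d_\tau(c)<1$. The paper instead uses a complementation argument via \cite[Lemma~7.1]{RorWin10ZRevisited} (an \axiomO{5}-type decomposition) together with a minimality analysis, but the essential missing idea in your proposal is the reduction to faithful $d_\tau$, not the choice of endgame.
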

\begin{proof}[Sketch of the proof]
    Note that (iii) trivially implies (ii). To see that (ii) implies (i), assume for the sake of contradiction that there exist ideals $J\subseteq I\subseteq A$ such that $I/J$ is elementary. Since $A$ is stable, $I/J\cong\K (H)$ with $H$ infinite dimensional. In particular, it follows from the spectral theorem that the composition of this isomorphism with the rank map takes values exactly on $\NN\cup\{\infty\}$. Precomposing with the quotient $I\to I/J$, we get a map $d_I\colon I_+\to \NN\cup\{\infty\}$. Let $d\colon A_+\to \NN\cup\{\infty \}$ be given by $d(a)=d_I (a)$ if $a\in I_+$ and $d(a)=\infty$ otherwise. It follows from Theorem~6.9 and Lemma~13.1 in \cite{GarPer23arX:ModernCu} that $d=d_\tau$ for some $\tau \in {\rm QT}(A)$. This contradicts (ii), since $d_\tau (A_+)=d(A_+)=\mathbb{N}\cup\{\infty \}$.

    Now assume that $A$ is nowhere scattered and let us prove (iii). Thus, let $\tau$ be a $2$-quasitrace such that its associated dimension function $d_\tau$ satisfies $d_\tau (A_+)\not\subseteq \{0,\infty\}$. Set $J=\{a\in A_+\mid d_\tau (a)=0\}$, which is a (proper) ideal by \cite[Theorem~I.1.17]{BlaHan82DimFct}. We note that $A/J$ is nowhere scattered by \autoref{prp:PermProp}, and that the induced dimension function on $A/J$ is faithful and has the same values as $d_\tau$. Thus, upon passing to a quotient, we may assume without loss of generality that $d_\tau$ itself is faithful.

    Let $n\in\NN$ be the least non-negative integer such that $\inf \{d_\tau (A_+\setminus\{0\})\}\in [n,n+1)$, and let $b\in A_+$ be such that $d_\tau (b)\in [n,n+1)$. For any nonzero element $a\in A_+$ such that $a\precsim b$ and for any nonzero $\varepsilon <\Vert a\Vert$, it follows from \cite[Lemma~7.1]{RorWin10ZRevisited}\footnote{This is an early (and weaker) version of what would later be called \axiomO{5}, a property introduced in \cite[Definition~4.1]{AntPerThi18TensorProdCu} that is satisfied in any \ca{}.} that there exist orthogonal positive elements $a_0$ and $c$ such that
    \[    
        b\precsim a_0+c ,\quad 
        a_0\sim a,\andSep 
        d_\tau (b)-d_\tau (a)\leq d_\tau (c)\leq d_\tau (b)-d_\tau ((a-\varepsilon)_+).
    \]

    Note that, since $(a-\varepsilon)_+$ is nonzero and $d_\tau$ is faithful, we have $d_\tau ((a-\varepsilon)_+)\geq n$. Thus, we get $d_\tau (c)<(n+1)-n=1$. Further, if we had $d_\tau (a)<d_\tau (b)$, we would also get $d_\tau (c)>0$. In other words, if there exists some nonzero $a\in A_+$ such that $a\precsim b$ and $d_\tau (a)<d_\tau (b)$, we can produce an element $c$ with the desired properties.

    We claim that such an element $a$ always exists. Indeed, if no such $a$ exists, then $d_\tau (b)=d_\tau (a)$ for all $a\precsim b$. By \cite[Lemma~7.1]{RorWin10ZRevisited}, we can obtain elements $a_0,c$ as above for any $\varepsilon\in (0,\Vert a\Vert)$. Since $(a-\varepsilon)_+\precsim b$ and $(a-\varepsilon)_+$ is not zero, then $d_\tau ((a-\varepsilon)_+)=d_\tau (b)$. Thus, $d_\tau (c)=0$ and, since $d_\tau$ is faithful, we must have $c=0$ and hence $a\precsim b\precsim a_0+c=a_0\sim a$. In other words, $b\sim a$. This shows that $b$ is a $\precsim$-minimal element.

    One can now show that the (simple) ideal generated by $b$ is elementary (cf. \cite[Lemma~8.2]{ThiVil24NowhereScattered}), which contradicts our assumption of nowhere scatteredness.   
\end{proof}

In short, property (iii) above says that, in order to prove nowhere scatteredness, one need not look at all the values $d_\tau (a)$ for each $\tau$ and $a$. Instead, it is enough to produce an element $c$ for every quasitrace $\tau$ such that $d_\tau (c) \in (0,1)$ whenever $d_\tau$ has nonzero finite values. This flexibility is crucial when dealing with algebras $A$ where we may not know how to deal with general positive elements but nevertheless have a good understanding of a specific subset of $A_+$.

As a first example, one can study nowhere scatteredness of multiplier algebras by solely studying the subset of positive elements given by strictly convergent sums of pairwise orthogonal operators; see \autoref{thm:MA} below. In fact, we have already encountered another instance in these notes where (iii) eases the proof of nowhere scatteredness. Indeed, the easiest way to prove that minimal tensor products work well with nowhere scatteredness (\autoref{prp:PermProp}) is to use Kirchberg's slice lemma and show that for any quasitrace $\tau$ there is an elementary tensor $a\otimes b$ with $a\in A_+$ and $b\in B_+$ such that $0<d_\tau (a\otimes b)<1$.

Recall that a \ca{} is said to have \emph{real rank zero} (resp. \emph{stable rank one}) if self-adjoint invertible elements (resp. invertible elements) in the minimal unitization $\tilde{A}$ of $A$ are dense in $\tilde{A}_{sa}$ (resp. $\tilde{A}$) \cite{BroPed91CAlgRR0,Rie83DimSRKThy}. The \emph{nuclear dimension} of a \ca{} is a \ca{ic} analogue of Lebegue's topological covering dimension \cite{WinZac10NuclDim}.

\begin{thm}[cf. {\cite[Theorem~1.4]{Vil23NScaMult}}]\label{thm:MA}
    Let $A$ be a $\sigma$-unital \ca{} of
    \begin{enumerate}
        \item finite nuclear dimension; or
        \item real rank zero; or
        \item stable rank one and strict comparison.
    \end{enumerate}

    Then, $A$ is nowhere scattered if and only if its multiplier algebra $\mathcal{M}(A)$ is.
\end{thm}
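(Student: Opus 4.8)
The plan is to treat the two implications separately; the hypotheses on $A$ are only needed for one of them. For the easy direction, $A$ is an essential ideal of $\mathcal{M}(A)$, so if $\mathcal{M}(A)$ is nowhere scattered then so is $A$ by \autoref{prp:PermProp}~(2); no assumption on $A$ is used here.

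For the converse, assume $A$ is $\sigma$-unital, nowhere scattered, and satisfies one of (1)--(3); we must show $\mathcal{M}(A)$ is nowhere scattered. The strategy is to exploit the flexibility advertised after \autoref{prp:TracCharNSca}: rather than controlling \emph{all} positive elements of $\mathcal{M}(A)$ (or of its hereditary subalgebras and quotients, via \autoref{thm:MultChar}), it should suffice to understand the tame family of strictly convergent sums $c=\sum_n b_n$ with the $b_n\in A_+$ pairwise orthogonal and supported in pairwise orthogonal hereditary sub-\ca{s} $A_n$ of $A$. For such a $c$ any dimension function splits additively, $d_\tau(c)=\sum_n d_\tau(b_n)$, so on the one hand a finite nonzero total forces the summands to carry arbitrarily small nonzero mass, and on the other hand each $A_n$ is again nowhere scattered by \autoref{prp:PermProp}~(2), so \autoref{prp:TracCharNSca}~(iii) may be applied \emph{inside} the $A_n$ to refine the decomposition until some summand realizes a value in the interval $(0,1)$ --- which is exactly the data needed to certify nowhere scatteredness. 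The role of the three hypotheses is then to guarantee that this ``diagonal'' family is rich enough to see everything: finite nuclear dimension, real rank zero, and stable rank one together with strict comparison are precisely the regularity conditions under which the hereditary subalgebras and quotients of $\mathcal{M}(A)$ are controlled by $A$. For (2) one additionally uses that $\mathcal{M}(A)$ inherits real rank zero (Zhang), so nowhere scatteredness is governed by projections, together with the fact that a corner $pAp$ of a nowhere scattered algebra is again nowhere scattered by \autoref{prp:PermProp}~(2) and hence $p$ is never minimal; for (3) one leans on the known description of $\mathrm{Cu}(\mathcal{M}(A))$ in terms of $\mathrm{Cu}(A)$. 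Throughout one passes to stabilizations where convenient, which preserves nowhere scatteredness by \autoref{prp:PermProp}~(1) and, in cases (1)--(2), the relevant regularity as well.

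The main obstacle is precisely this reduction to diagonal elements. Quotients of $\mathcal{M}(A)$ --- corona algebras --- are notoriously pathological, and without one of the three regularity assumptions there is no known way to guarantee that a finite-dimensional irreducible representation of a hereditary subalgebra of such a quotient, or a minimal open projection therein, must already be detected on the comparatively rigid family of strictly convergent orthogonal sums coming from $A$. Once that reduction is available, the remaining verification is a routine propagation of nowhere scatteredness of $A$ through \autoref{prp:PermProp}~(2) and \autoref{prp:TracCharNSca}~(iii).
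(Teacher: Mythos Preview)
Your outline has a genuine gap at its core. The additivity claim $d_\tau(c)=\sum_n d_\tau(b_n)$ for a strictly convergent sum $c=\sum_n b_n\in\mathcal{M}(A)$ is false for the dimension functions that matter. Any $d_\tau$ on $\mathcal{M}(A)\otimes\mathcal{K}$ that vanishes on the ideal $A\otimes\mathcal{K}$ gives $d_\tau(b_n)=0$ for every $n$ while $d_\tau(c)$ can be strictly positive; these are exactly the functions arising from the corona $\mathcal{Q}(A)$, and they are precisely the obstruction to nowhere scatteredness of $\mathcal{M}(A)$ once $A$ itself is nowhere scattered. So the mechanism you propose---refining inside the $A_n$ using \autoref{prp:TracCharNSca}~(iii) applied to $A$---never engages with the relevant dimension functions.

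The paper's argument avoids this by first using the extension permanence (\autoref{prp:PermProp}~(2)) to reduce to showing that $\mathcal{Q}(A)$ is nowhere scattered, and then working entirely with dimension functions on $\mathcal{Q}(A)\otimes\mathcal{K}$. The decomposition into (classes of) strictly convergent orthogonal sums is used there, but \emph{not} via any additivity formula; instead, the three hypotheses are invoked to produce, from a given sum $S$ with $d_\tau(S)\in(0,1]$, another sum $S_0$ satisfying $d_\tau(S_0)\le\tfrac12 d_\tau(S)\le m\,d_\tau(S_0)$ for some $m$ (\cite[Lemma~5.8]{Vil23NScaMult}). This halving step---not the reduction to diagonal elements, which holds for any $\sigma$-unital $A$---is where both the regularity assumptions and nowhere scatteredness of $A$ enter. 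Your description of the role of the hypotheses (``rich enough to see everything'') and your case-by-case sketches (e.g.\ checking only that projections of $\mathcal{M}(A)$ are non-minimal, which ignores quotients; or invoking a description of $\Cu(\mathcal{M}(A))$ that is not available) do not supply this step.
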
 
\begin{proof}[Idea of the proof]
    Let $A$ be a \ca{} and let $\mathcal{M}(A)$ be its multiplier algebra. It follows from \autoref{prp:PermProp} that $A$ is nowhere scattered whenever $\mathcal{M}(A)$ is. Thus, assume that $A$ is nowhere scattered and let us prove that $\mathcal{M}(A)$ is. Since nowhere scatteredness works well with extensions (\autoref{prp:PermProp}), it suffices to show that the corona algebra $\mathcal{Q}(A)=\mathcal{M}(A)/A$ is nowhere scattered.

    Take a dimension function $d_\tau$ on $\mathcal{Q}(A)\otimes\mathcal{K}$, and assume that $d_\tau$ takes values exactly on $\NN\cup\{\infty\}$. Find $a\in(\mathcal{Q}(A)\otimes\mathcal{K})_+$ such that $d_\tau (a)=1$. \autoref{prp:DFnotStb} lets us choose $a\in \mathcal{Q}(A)_+$. Using that $A$ is $\sigma$-unital, we know that every element in $\mathcal{Q}(A)$ can be written as the sum of two positive (classes of) strictly convergent sums of pairwise orthogonal elements in $A_+$. Applying this to $a$, we deduce that there exists a strictly convergent sum $S$ as before whose value on $d_\tau$ is in $(0,1]$.

    Under any of the stated conditions of the theorem and the assumption of nowhere scatteredness of $A$, one can produce another strictly convergent sum $S_0$ and $m\in\NN$ such that $d_\tau (S_0)\leq \frac{1}{2}d_\tau (S)\leq md_\tau (S_0)$ for some $m\in\NN$; see \cite[Lemma~5.8]{Vil23NScaMult}. In particular, $d_\tau (S_0)\in (0,\frac{1}{2}]$, a contradiction. This shows that $\mathcal{Q}(A)$ is nowhere scattered by \autoref{prp:TracCharNSca}.
\end{proof}

\begin{rmk}
    Similar to the discussion from \autoref{rmk:CX_NSCvsGGP}, it is not known if the analogue of the previous result holds for the Global Glimm Property. The main reason for this is that we currently lack a characterization of the form in \autoref{prp:TracCharNSca}.
\end{rmk}

\subsection{\texorpdfstring{Weakly purely infinite \ca{s}}{Weakly purely infinite C*-algebras}}\label{subsec:WPIAlgs}

The concept of pure infiniteness for simple \ca{s} was first introduced by Cuntz in \cite{Cun81Kth}, offering a $\mathrm{C}^*$-analogue to type III factors. One of Kirchberg’s celebrated `Geneva Theorems' \cite{Kir94ICM} asserts that a unital, simple, separable, nuclear \ca{} is purely infinite precisely when it tensorially absorbs the Cuntz algebra 
\[
    \mathcal{O}_\infty := C^*\left(s_1, s_2, \ldots \mid s_i^*s_i = 1,\, \sum_i s_is_i^* = 1\right).
\]

Since its inception, the notion of simple pure infiniteness ---equivalently $\mathcal{O}_\infty$-absorption in the nuclear case--- has played a crucial role in classification theory \cite{Kir25Book,Phi00ClassNuclPISimple} and has recently gained prominence in the study of $\mathrm{C}^*$-dynamics \cite{GabSza24Dyn}.

Motivated by the significance of pure infiniteness in the simple case, Kirchberg and R\o{}rdam set out to find an appropiate generalization of not necessarily simple pure infinite \ca{s}. This was done in \cite{KirRor00PureInf,KirRor02InfNonSimpleCalgAbsOInfty}, where they introduced three shades of pure infinitiness: strongly purely infinite, purely infinite, and weakly purely infinite \ca{s}. The latter two notions will be the focus of this subsection.

Let $a$ be a positive element in a \ca{} $A$. Following the notation introduced in \autoref{rmk:NtnDiagSum}, we will denote by $a^{\oplus n}$ the diagonal matrix $a\oplus\ldots\oplus a$ in $M_n(A)$.

\begin{dfn}[{\cite[Definition~4.3]{KirRor02InfNonSimpleCalgAbsOInfty}}]\label{dfn:WPI}
    Let $n\in\NN$. A \ca{} $A$ is $n$-\emph{weakly purely infinite} if, for every positive element $a$ in $A$, one has
    \[
        a^{\oplus n}\sim 
        a^{\oplus (n+1)}
    \]
    in $M_{n+1}(A)$.
    
    We say that the algebra $A$ is \emph{weakly purely infinite} if it is $n$-weakly purely infinite for some $n$. If $A$ is $1$-weakly purely infinite, we say that $A$ is \emph{purely infinite}.
\end{dfn}

\begin{prp}
    Let $A$ be a weakly purely infinite \ca{}. Then, $A$ is nowhere scattered.
\end{prp}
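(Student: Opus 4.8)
The plan is to obtain the statement from two permanence facts: that being $n$-weakly purely infinite passes to ideals and to quotients, and that no nonzero elementary \ca{} is $n$-weakly purely infinite for any $n$. Granting these, I would argue by contradiction: suppose $A$ is $n$-weakly purely infinite for some $n\in\NN$ (this is what weak pure infiniteness means, by \autoref{dfn:WPI}) but is not nowhere scattered. By \autoref{dfn:NowScat} there are ideals $J\subseteq I\subseteq A$ with $I/J$ a nonzero elementary \ca{}. Passing first to the ideal $I$ and then to the quotient $I/J$ yields a nonzero elementary \ca{} that is $n$-weakly purely infinite, contradicting the second fact. Hence $A$ is nowhere scattered.

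For the permanence statements: quotients are immediate. If $\pi\colon A\to A/J$ is the quotient map and $\bar a\in(A/J)_+$, lift $\bar a$ to some $a\in A_+$ (using that $*$-homomorphisms commute with continuous functional calculus) and apply the amplification $M_{n+1}(\pi)$ to $a^{\oplus n}\sim a^{\oplus(n+1)}$; since $*$-homomorphisms preserve Cuntz (sub)equivalence, $\bar a^{\oplus n}\sim\bar a^{\oplus(n+1)}$ in $M_{n+1}(A/J)$. Ideals require one small argument. Fix an ideal $I$ of $A$ and $a\in I_+$; the two Cuntz subequivalences constituting $a^{\oplus n}\sim a^{\oplus(n+1)}$ hold in $M_{n+1}(A)$, and I must see they hold in $M_{n+1}(I)$. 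So let $x,y\in M_{n+1}(I)_+$ with $x\precsim y$ in $M_{n+1}(A)$. By \autoref{prp:CuSubProp}(1) it suffices to show $(x-\varepsilon)_+\precsim y$ in $M_{n+1}(I)$ for each $\varepsilon>0$; by \autoref{prp:CuSubProp}(2) there are $\delta>0$ and $r\in M_{n+1}(A)$ with $(x-\varepsilon)_+=r(y-\delta)_+r^*$. Writing $(y-\delta)_+=f(y)\,y\,f(y)$ for a continuous $f$ with $f(0)=0$, so that $f(y)\in C^*(y)\subseteq M_{n+1}(I)$, one gets $(x-\varepsilon)_+=s\,y\,s^*$ with $s:=r\,f(y)$, and $s\in M_{n+1}(I)$ because $M_{n+1}(I)$ is an ideal of $M_{n+1}(A)$. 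Hence $(x-\varepsilon)_+\precsim y$ in $M_{n+1}(I)$, as needed.

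Finally, for the elementary case, let $\K(H)$ be nonzero and suppose it is $n$-weakly purely infinite. Applying \autoref{dfn:WPI} to a minimal (rank-one) projection $p\in\K(H)$ gives $p^{\oplus n}\sim p^{\oplus(n+1)}$ in $M_{n+1}(\K(H))\cong\K(H^{\oplus(n+1)})$. But $p^{\oplus n}$ and $p^{\oplus(n+1)}$ are projections of finite rank $n$ and $n+1$, and Cuntz-equivalent finite-rank projections have equal rank: a Cuntz subequivalence between projections upgrades, via \autoref{prp:CuSubProp}(2), to a Murray--von Neumann subequivalence, which is rank-preserving on finite-rank projections. This forces $n=n+1$, a contradiction. (Alternatively, one evaluates the canonical densely defined trace of $\K(H^{\oplus(n+1)})$ on these two projections and uses that its dimension function is monotone under $\precsim$.) I expect the only genuinely non-formal point in the whole argument to be the descent to the ideal above, where \autoref{prp:CuSubProp} does the work of replacing an arbitrary Cuntz-equivalence witness by one inside $M_{n+1}(I)$; everything else is bookkeeping.
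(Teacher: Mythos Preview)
Your proof is correct, and it takes a genuinely different route from the paper's. The paper argues via the tracial characterization of nowhere scatteredness (\autoref{prp:TracCharNSca}): for any dimension function $d_\tau$ on $A\otimes\mathcal{K}$ one has $nd_\tau(a)=(n+1)d_\tau(a)$ for all $a\in A_+$, forcing $d_\tau(A_+)\subseteq\{0,\infty\}$ and hence nowhere scatteredness. Your argument instead works directly from \autoref{dfn:NowScat}, establishing that $n$-weak pure infiniteness passes to ideal-quotients and that no nonzero elementary algebra can have it. Your approach is more self-contained (it avoids the machinery behind \autoref{prp:TracCharNSca}), while the paper's is shorter precisely because that machinery is already in place; interestingly, your ``alternative'' rank/trace argument for the elementary case is essentially the paper's argument specialized to the rank function on $\K(H)$. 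The descent-to-ideal step you flagged is indeed the only place requiring care, and your use of \autoref{prp:CuSubProp}(1)--(2) handles it cleanly.
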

\begin{proof}
    Let $n\in\NN$ be such that $A$ is $n$-weakly purely infinite, and let $d_\tau$ be a dimension function on its stabilization $A\otimes\mathcal{K}$. 
    
    Assume that $d_\tau$ takes some nonzero finite value. In particular, $d_\tau$ must also take some nonzero finite value when restricted to $A_+$ (\autoref{prp:DFnotStb}). However, one has 
    \[
        nd_\tau(a)=(n+1)d_\tau (a)
    \]
    for any $a\in A_+$, which shows that $d_\tau (A_+)\subseteq \{ 0,\infty\}$. This proves that no dimension function on $A\otimes\mathcal{K}$ has a nonzero finite value and, consequently, that $A$ is nowhere scattered as a combination of Propositions \ref{prp:PermProp} and \ref{prp:TracCharNSca}.

    Alternatively, one could have used the fact that any $n$-weakly purely infinite \ca{} has a $n^2$-weakly purely infinite stabilization \cite[Proposition~4.5]{KirRor02InfNonSimpleCalgAbsOInfty}.
\end{proof}

One of the most important problems in the theory of purely infinite \ca{s} is to determine if all three notions introduced by Kirchberg and R\o{}rdam agree. Specialized to our case, the problem reads as follows:

\begin{qst}[{\cite[Question~9.5]{KirRor02InfNonSimpleCalgAbsOInfty}}]\label{qst:WPIProb}
    Is every weakly purely infinite \ca{} purely infinite?
\end{qst}

\cite[Proposition~4.11]{KirRor02InfNonSimpleCalgAbsOInfty} shows that $\sigma$-unital, weakly purely infinite \ca{s} have a weakly purely infinite multiplier algebra. Another question of interest ---which is likely more approachable--- specializes the previous question to this case:

\begin{qst}
    Let $A$ be a $\sigma$-unital purely infinite \ca{}. Is $\mathcal{M}(A)$ purely infinite?
\end{qst}

We will continute the discussion on the weakly purely infinite problem (\autoref{qst:WPIProb}) in \autoref{subsec:PIAlgs}.

\subsection{Groups, graphs and cocycles}
We end this section by making some remarks on nowhere scatteredness for \ca{s} built out of combinatorial data, such as (reduced) group \ca{s}, twisted group \ca{s} or graph algebras. In general, an interesting question is to determine conditions on the underlying objects that characterize nowhere scatteredness of the algebras. The relevance of this study lies, once again, on dimension-reduction phenomena (\autoref{subsec:DimRed}); see also \autoref{pgr:GaborFrames}.

\begin{qst}
    For \ca{s} built from `combinatorial data' (e.g. groups, graphs, groupoids), can one characterize nowhere scatteredness in terms of a property of the underlying object?
\end{qst}

To set the stage for \autoref{subsec:DimRed}, we point out the following fact: In \autoref{prp:NWScaGraph} (resp. \autoref{thm:NwSCTwist}) we state that nowhere scatteredness and $\mathcal{Z}$-stability coincide for a certain class of graph \ca{s} (resp. twisted group \ca{s}). In both of these cases, it is known that the algebras have finite nuclear dimension.

The following is Theorem~B of the recent preprint \cite{Fau25ZstabGraAlg}. Recall that a graph  has \emph{condition (K)} if every vertex on a cycle has two distinct return paths \cite[Section~6]{KPRR97}.

\begin{thm}\label{prp:NWScaGraph}
    Let $E$ be a finite graph. Then, the following are equivalent:
    \begin{itemize}
        \item[(i)] $C^*(E)$ is nowhere scattered;
        \item[(ii)] $C^*(E)$ is $\mathcal{Z}$-stable;
        \item[(iii)] $E$ has no sources and satisfies condition (K).
    \end{itemize}
\end{thm}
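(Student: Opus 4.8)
The plan is to prove the cycle of implications (iii) $\Rightarrow$ (ii) $\Rightarrow$ (i) $\Rightarrow$ (iii), relying on the well-developed structure theory of graph \ca{s} of finite graphs together with the permanence results and the tracial characterization recorded above. Recall that for a finite graph $E$, the algebra $C^*(E)$ is always nuclear and, by the Kirchberg--Phillips type structure theory, decomposes along the gauge-invariant ideal lattice: each subquotient corresponding to a `hereditary saturated' piece of $E$ is either an AF algebra (built from the acyclic part), a simple purely infinite Kirchberg algebra, or a matrix algebra over $C(\mathbb{T})$, or more generally a recursive subhomogeneous/AT-type piece coming from the cycles.

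\textbf{Step 1: (iii) $\Rightarrow$ (ii).} Assume $E$ has no sources and satisfies condition (K). Condition (K) is exactly the graph-theoretic condition ensuring that $C^*(E)$ has real rank zero and that its primitive ideal space is discrete enough that $C^*(E)$ decomposes, via its (finite) ideal lattice, into extensions whose subquotients are either AF algebras or simple purely infinite (Kirchberg) algebras --- see \cite{KPRR97} and the subsequent structure theory. The absence of sources rules out the `corner' elementary pieces: with sources removed, no vertex gives rise to a one-dimensional or finite-dimensional direct summand, so every simple subquotient is either a non-elementary AF algebra or a Kirchberg algebra, each of which is $\mathcal{Z}$-stable (Kirchberg algebras absorb $\mathcal{O}_\infty$ hence $\mathcal{Z}$; the relevant AF pieces are infinite-dimensional). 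One then builds $\mathcal{Z}$-stability of the whole algebra from $\mathcal{Z}$-stability of all subquotients: since $\mathcal{Z}$-stability passes to ideals and quotients \cite[Corollary~3.3]{TomWin07ssa} and, by the work of Toms--Winter and Hirshberg--Rordam--Winter, can be assembled along extensions when the ideal lattice is finite (here it is, $E$ being finite), one concludes $C^*(E)$ is $\mathcal{Z}$-stable. Alternatively, one can invoke that $C^*(E)$ has finite nuclear dimension (true for all finite graphs) and apply the non-simple $\mathcal{Z}$-stability results discussed in \autoref{subsec:DimRed}; this is the route taken in \cite{Fau25ZstabGraAlg}.

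\textbf{Step 2: (ii) $\Rightarrow$ (i).} This is immediate from \autoref{exas:NowScat}~(2): every $\mathcal{Z}$-stable \ca{} is nowhere scattered.

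\textbf{Step 3: (i) $\Rightarrow$ (iii), by contraposition.} Suppose (iii) fails. If $E$ has a source $v$, then the vertex projection $p_v$ generates, modulo the ideal of paths not ending at $v$ in a suitable sense, a corner isomorphic to a matrix algebra $M_k(\mathbb{C})$ --- more precisely, one locates an ideal-quotient of $C^*(E)$ that is elementary (finite-dimensional), witnessing failure of nowhere scatteredness via \autoref{dfn:NowScat}. Concretely, a source that emits only finitely many paths before hitting a sink or a vertex already accounted for contributes a finite-dimensional subquotient. If instead condition (K) fails, there is a vertex $w$ lying on exactly one return path (a cycle with no entries, in the relevant subquotient); the associated gauge-invariant subquotient is then Morita equivalent to $C(\mathbb{T})$, whose spectrum $\mathbb{T}$ has isolated points in no reasonable sense --- but the point is subtler: a cycle without entry produces a subquotient $\cong M_k(C(\mathbb{T}))$, which is \emph{not} nowhere scattered because $C(\mathbb{T})$ fails condition~(ii) of \autoref{thm:MultChar} (the hereditary subalgebra generated by a bump function is commutative). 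Either way, $C^*(E)$ has a nonzero elementary or non-antiliminal ideal-quotient, so it is not nowhere scattered, and we invoke \autoref{thm:MultChar}~(ii) and the permanence of \autoref{prp:PermProp}~(2) to finish.

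\textbf{Main obstacle.} The crux is Step 1: translating the purely combinatorial hypotheses (no sources, condition (K)) into the $\mathcal{Z}$-stability of a genuinely non-simple algebra. The cleanest path is to reduce to the classification of the finitely many subquotients in the ideal lattice and then assemble $\mathcal{Z}$-stability along the (finitely many) extensions, which requires knowing that $\mathcal{Z}$-stability is stable under such extensions for separable nuclear algebras with finite ideal lattice; invoking the finite nuclear dimension of $C^*(E)$ and the non-simple dimension-reduction machinery of \autoref{subsec:DimRed} sidesteps the need to verify the extension argument by hand. Identifying precisely which combinatorial features force an elementary (or merely commutative-hereditary) subquotient in Step 3 --- and confirming that `no sources + condition (K)' excludes exactly those --- is the other point demanding care, though it is essentially a bookkeeping exercise in the gauge-invariant ideal structure of $C^*(E)$.
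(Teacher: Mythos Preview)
The paper does not give its own proof of this statement: it is recorded as Theorem~B of \cite{Fau25ZstabGraAlg} and simply cited. So there is nothing in the survey to compare your argument against directly.

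That said, your sketch has the right shape but contains two genuine gaps. First, in Step~1 your dichotomy for the simple subquotients is off: for a \emph{finite} graph there are no infinite-dimensional AF pieces --- the acyclic parts produce only finite-dimensional algebras --- so under condition~(K) each simple subquotient is either finite-dimensional or Kirchberg. You must therefore argue carefully that ``no sources'' kills exactly the finite-dimensional pieces (and this is sensitive to the edge-direction convention in \cite{Fau25ZstabGraAlg}); once that is done, all simple subquotients are Kirchberg and the Toms--Winter extension result you cite does assemble $\mathcal{Z}$-stability up the finite ideal lattice. Your alternative route --- ``invoke finite nuclear dimension and the non-simple machinery of \autoref{subsec:DimRed}'' --- is not a proof: the relevant implication (finite nuclear dimension $+$ nowhere scattered $\Rightarrow$ $\mathcal{Z}$-stable) is precisely the open \autoref{qst:NonSimpTW}, and \autoref{cor:EqTWGGP} reformulates it rather than resolving it.

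Second, your Step~3 treatment of sources is too loose. A source need not produce an elementary ideal-quotient in every convention (e.g.\ in the Raeburn convention one can have a source feeding into a purely infinite component, yielding a simple Kirchberg algebra). What one actually needs is that the relevant ``source'' condition forces some simple subquotient to be finite-dimensional; this is again a matter of pinning down the convention and then tracing the corresponding hereditary--saturated subset. Your condition~(K) half of Step~3 (a cycle without entry yields an $M_k(C(\mathbb{T}))$ subquotient, which has elementary quotients $M_k(\mathbb{C})$) is correct.
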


\begin{conj}[{\cite[Conjecture~4.1]{Fau25ZstabGraAlg}}]
    For a row-finite graph $E$, $C^* (E)$ is nowhere scattered if and only if $C^*(E)$ is $\mathcal{Z}$-stable, if and only if $E$ has Condition (K) and distinct detours.
\end{conj}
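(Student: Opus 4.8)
The plan is to proceed in the same spirit as the proof of \autoref{prp:NWScaGraph}, but to handle the row-finite case by a reduction to finite graphs via the permanence properties of \autoref{prp:PermProp}. The key observation is that every row-finite graph $E$ is an increasing union of finite subgraphs, and $C^*(E)$ is the corresponding inductive limit (or is Morita equivalent to one); since nowhere scatteredness passes through inductive limits (\autoref{prp:PermProp}~(3)) and is stable under Morita equivalence (\autoref{prp:PermProp}~(1)), one direction of the nowhere-scatteredness equivalence should be obtainable by an approximation argument. For the graph-theoretic conditions, I would first recall that Condition (K) is a local property (it only refers to vertices on cycles and their return paths), so it behaves well with respect to the relevant subgraph approximations, whereas the ``distinct detours'' condition is the genuinely new ingredient replacing ``no sources'' in the finite case.

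First I would pin down the precise graph-theoretic statement: for row-finite $E$, the claim is that $C^*(E)$ has no nonzero elementary ideal-quotients exactly when there is no gauge-invariant ideal-quotient isomorphic to $\mathcal K$ or to $M_n(C(\mathbb T))$ --- the two families of elementary and ``almost elementary'' building blocks that appear in the structure theory of graph $C^*$-algebras. Using the bijection between gauge-invariant ideals and admissible pairs of subsets of vertices (saturated hereditary sets together with breaking vertices), an ideal-quotient of $C^*(E)$ is again a graph $C^*$-algebra $C^*(F)$ for an associated graph $F$, and it is elementary precisely when $F$ is (essentially) a finite or infinite ``line'' graph with no branching. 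The condition that no such $F$ arises should translate exactly into: $E$ has Condition (K) (ruling out the ideal-quotients isomorphic to matrix algebras over $C(\mathbb T)$, which come from isolated cycles without entries) together with ``distinct detours'' (ruling out the copies of $\mathcal K$ and finite matrix algebras coming from sources and bare paths). So the heart of the argument is a careful translation of ``elementary ideal-quotient'' into combinatorics, which one then negates.

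For the $\mathcal Z$-stability equivalence, the plan is to cite the existing classification-type results for row-finite (or even countable) graph $C^*$-algebras: such algebras have nuclear dimension at most one, and for nuclear $C^*$-algebras of finite nuclear dimension one expects nowhere scatteredness to coincide with $\mathcal Z$-stability by the dimension-reduction results surveyed in \autoref{sec:ResAndAppl}. More concretely, one can use that a graph $C^*$-algebra is $\mathcal Z$-stable if and only if it has no nonzero unital quotient with a nonzero finite-dimensional part, or appeal directly to the known characterization of $\mathcal Z$-stability for graph algebras in terms of Condition (K) plus the absence of the relevant ``degenerate'' ideal-quotients --- which is the same combinatorial condition as above. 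Thus (ii) $\Leftrightarrow$ (iii) would follow once (i) $\Leftrightarrow$ (iii) is established, essentially for free, since both (i) and (ii) are being characterized by the same list of forbidden ideal-quotients.

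The main obstacle I anticipate is the passage from ``finite'' to ``row-finite'': in the finite case the condition ``no sources'' is clean, but for infinite graphs the correct replacement (``distinct detours'') is subtle, because one must control not just sources but also infinite paths and tails that can still give rise to elementary ideal-quotients even when every vertex emits an edge. Making the reduction to finite subgraphs rigorous requires checking that the relevant ideal-quotients of the limit algebra are detected at a finite stage --- which is true for the \emph{absence} of elementary ideal-quotients (an inductive-limit argument via \autoref{prp:PermProp}~(3)) but needs care for the converse direction, where one must produce an elementary ideal-quotient of $C^*(E)$ from the combinatorial failure of ``Condition (K) and distinct detours''. I expect this converse, i.e.\ exhibiting the bad ideal-quotient explicitly from a violating configuration in $E$, to be where essentially all the work lies; the rest is bookkeeping with saturated hereditary sets and the gauge-invariant ideal structure.
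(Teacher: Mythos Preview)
The statement you are attempting to prove is a \emph{conjecture}, not a theorem: the paper records it as Conjecture~2.9, citing \cite[Conjecture~4.1]{Fau25ZstabGraAlg}, and offers no proof whatsoever. There is thus no ``paper's own proof'' to compare your proposal against; you are sketching an attack on an open problem.

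On the substance of your sketch: the broad strategy (translate ``no elementary ideal-quotient'' into forbidden combinatorial configurations via the gauge-invariant ideal structure, then invoke finite nuclear dimension for the $\mathcal{Z}$-stability step) is reasonable, but the argument contains a genuine gap at the $\mathcal{Z}$-stability stage. You write that ``for nuclear $C^*$-algebras of finite nuclear dimension one expects nowhere scatteredness to coincide with $\mathcal{Z}$-stability by the dimension-reduction results surveyed in \autoref{sec:ResAndAppl}.'' This is precisely the content of \autoref{qst:NowScatFinDimGGP} and \autoref{cor:EqTWGGP}, which the paper presents as \emph{open}; invoking it would make your argument circular with respect to the non-simple Toms--Winter conjecture. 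Moreover, not every ideal of a graph $C^*$-algebra is gauge-invariant (this fails exactly when Condition~(K) fails), so your reduction of ``elementary ideal-quotient'' to the gauge-invariant lattice needs a separate argument in the absence of Condition~(K). These are presumably among the reasons the statement remains a conjecture.
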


The notion of nowhere scatteredness in the context of reduced group \ca{s} remains relatively unexplored. It is well known that the underlying group must be non-amenable ---otherwise, the left-regular representation gives rise to an elementary quotient--- but this appears to be the only known necessary condition. Nevertheless, examples of nowhere scattered group \ca{s} can be readily constructed using the permanence properties from \autoref{prp:PermProp}. For instance, the reduced group \ca{} of $G\times H$, where $G$ is any exact group and $H$ is any nontrivial $\mathrm{C}^*$-simple group, is nowhere scattered since $C^*_r(G\times H)\cong C^*_r(G)\otimes C_r^*(H)$.

In the context of (reduced) twisted group \ca{s}, one can say much more. This is partly because, unlike the untwisted case, many twisted group \ca{s} of amenable groups can be $\mathcal{Z}$-stable. Indeed, the noncommutative tori discussed in \autoref{exas:NowScat} arise as twisted group \ca{s} of the form $C_r^* (\mathbb{Z}^{2d},\sigma)$. Building upon this example and the ideas from \cite{EckMcK18DimNucNilpotGps}, one can characterize which $2$-cocycles yield nowhere scattered \ca{s} of finitely generated nilpotent groups.

\begin{thm}[{\cite[Theorem~4.6]{EnsVil25arX:Twisted}}]\label{thm:NwSCTwist}
    Let $G$ be a discrete finitely generated nilpotent group, and let $\sigma$ be a $2$-cocycle on $G$. Then, the following are equivalent:
    \begin{itemize}
        \item[(i)] $C^*_r(G,\sigma)$ is nowhere scattered;
        \item[(ii)] $C^*_r(G,\sigma)$ is $\mathcal{Z}$-stable;
        \item[(iii)] $\sigma$ is nonrational.
    \end{itemize}
\end{thm}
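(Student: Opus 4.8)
The plan is to establish the cycle of implications $(ii)\Rightarrow(i)\Rightarrow(iii)\Rightarrow(ii)$. Two preliminary facts hold for every $2$-cocycle: since finitely generated nilpotent groups are amenable, $C^*_r(G,\sigma)=C^*(G,\sigma)$ is unital, separable and nuclear; and, by the twisted analogue of the nuclear-dimension computations of \cite{EckMcK18DimNucNilpotGps}, it has finite nuclear dimension. The finite-$G$ case being immediate (there the algebra is finite-dimensional, every cocycle is rational, and all three conditions fail simultaneously), we assume $G$ infinite. The implication $(ii)\Rightarrow(i)$ then requires nothing new: by \autoref{exas:NowScat} every $\mathcal{Z}$-stable \ca{} is nowhere scattered.

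For $(i)\Rightarrow(iii)$ I would argue by contraposition. Unwinding the definition of rationality (which on abelian subquotients reduces to the familiar notion for noncommutative tori), if $\sigma$ is rational then collapsing $G$ along an appropriate central subquotient leaves a finitely generated abelian group whose $\sigma$-twisted algebra still admits a character; inducing this character back up via a Stone--von Neumann-type argument yields a nonzero finite-dimensional representation of $C^*(G,\sigma)$, and decomposing it gives a finite-dimensional \emph{irreducible} one. By the Burnside--Jacobson density theorem such a representation is surjective onto a matrix algebra $M_k(\CC)$, so $M_k(\CC)$ is an elementary quotient of $C^*(G,\sigma)$; hence, by \autoref{dfn:NowScat} (equivalently \autoref{thm:MultChar}~(v)), the algebra is not nowhere scattered.

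The heart of the matter is $(iii)\Rightarrow(ii)$, and the plan is to generalise the treatment of noncommutative tori indicated in \autoref{exas:NowScat}. First I would realise $C^*(G,\sigma)$ as a $C_0(X)$-algebra over a finite-dimensional compact base and reduce to its fibres. Concretely, one runs the twisted Mackey machine along a central series of $G$: the spectrum $X$ of the central subalgebra generated by the $\sigma$-regular central group unitaries is a quotient of a finite product of circles (hence finite-dimensional), $C^*(G,\sigma)$ fibres over it, and each fibre is again a twisted group algebra of a smaller finitely generated nilpotent group (a central quotient of $G$) with an induced cocycle. Iterating---the number of steps bounded by the Hirsch length of $G$---the fibres become \emph{simple}; the decisive point, and the only place nonrationality of $\sigma$ is used, is that a terminal fibre can fail to be simple only when $\sigma$ is rational. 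A simple terminal fibre is a unital, separable, nuclear, infinite-dimensional \ca{} of finite nuclear dimension, hence non-elementary, and thus $\mathcal{Z}$-stable by Winter's theorem \cite{Win12NuclDimZstable}. Finally, invoking at each level of the iteration the theorem of Hirshberg, R\o{}rdam and Winter that a separable $C_0(X)$-algebra over a finite-dimensional space all of whose fibres absorb a fixed strongly self-absorbing \ca{} must itself absorb it, one propagates $\mathcal{Z}$-stability from the fibres back up to $C^*(G,\sigma)$.

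The main obstacle is precisely this last structural analysis: building the iterated continuous-field decomposition via the twisted Mackey machine for nilpotent groups while keeping every base finite-dimensional, and---crucially---verifying that nonrationality of $\sigma$ is exactly what prevents a terminal fibre from degenerating into a rational, type I (hence non-simple) piece. This is where the techniques of \cite{EckMcK18DimNucNilpotGps} are indispensable. The remaining ingredients---finite nuclear dimension of twisted nilpotent group algebras, Winter's $\mathcal{Z}$-stability theorem, and permanence of strong self-absorption over finite-dimensional bases---enter as black boxes, and $(ii)\Rightarrow(i)$ together with $(i)\Rightarrow(iii)$ are comparatively routine.
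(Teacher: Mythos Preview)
The survey does not contain a proof of this theorem: immediately after stating it the author writes ``We refer the reader to \cite{EnsVil25arX:Twisted} for the general definition of non-rationality and the proof of \autoref{thm:NwSCTwist}.'' So there is no in-paper argument to compare against; all the paper offers is the hint that the result ``builds upon this example [noncommutative tori] and the ideas from \cite{EckMcK18DimNucNilpotGps}.''

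Your outline is broadly consistent with that hint: you invoke the twisted Eckhardt--McKenney machinery for the continuous-field decomposition and for finite nuclear dimension, and you close the loop with the Hirshberg--R{\o}rdam--Winter permanence result for strongly self-absorbing absorption over finite-dimensional bases. That is a reasonable strategy and very likely the one used in \cite{EnsVil25arX:Twisted}. Two places in your sketch are doing a lot of work with little justification, however. First, the contrapositive $(i)\Rightarrow(iii)$ leans on an undefined ``appropriate central subquotient'' and a ``Stone--von Neumann-type argument''; since the paper explicitly declines to define nonrationality in general, you are effectively asserting the hard direction of that definition without saying what it is. Second, in $(iii)\Rightarrow(ii)$ the claim that terminal fibres are \emph{simple} precisely when $\sigma$ is nonrational is the crux, and you acknowledge this is where the real content lies; but even granting that, you also need the fibres to be \emph{infinite-dimensional} in order to apply \cite{Win12NuclDimZstable}, and this needs to be argued (a simple finite-dimensional fibre is elementary and not $\mathcal{Z}$-stable). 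None of this is a fatal flaw for a plan, but since the survey itself defers the proof, your proposal should be read as a plausible blueprint rather than something one can check against the text.
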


We will not discuss here the general definition of non-rationality of $\sigma$. Instead, let us give its definition when $G$ is abelian: In this setting, $\sigma$ is nonrational if the index $[G:Z(G,\sigma )]$ is infinite, where 
\[
    Z(G,\sigma)=
    \{
    g\in G\mid \sigma (g,h)=\sigma(h,g)\,\forall h\in G
    \}.
\]
When $G$ is $\mathbb{Z}^{2d}$ for some $d\geq 1$ ---in which case $C^*(\mathbb{Z}^{2d},\sigma )\cong A_\Theta$ for some $\Theta$--- the aforementioned condition is precisely the same as $\Theta$ being nonrational; see e.g. \cite[Proposition~2.1]{EnsThiVil25Criteria}. We refer the reader to \cite{EnsVil25arX:Twisted} for the general definition of non-rationality and the proof of \autoref{thm:NwSCTwist}.

\begin{pgr}[Gabor frames and regularity properties]\label{pgr:GaborFrames}
Beyond the intrinsic interest of characterizing nowhere scatteredness or $\mathcal{Z}$-stability for twisted group \ca{s}, results such as \autoref{thm:NwSCTwist} are surprisingly related to time-frequency analysis and other types of sampling problems.

Namely, a number of these problems can be translated into asking if a pair of distinguished projections $p,q$ in $C_r^*(G,\sigma)\otimes\mathcal{K}$ satisfy $p\precsim q$.\footnote{The Cuntz subequivalence applied to projections reduces to the well-known \emph{Murray-von Neumann subequivalence}, defined as $p\precsim q$ whenever $p=vv^*$ and $v^*v\leq q$ for some $v$.} In the specific case of \cite{EnsVil25arX:Twisted}, the goal was to obtain a partial converse to the Balian-Low theorem for lattices $G$ in  connected, simply connected, nilpotent Lie groups. By the results in \cite[Theorem~6.6]{BedEnsvVelt22} (and some crucial observations from \cite[Theorem~5.1]{EnsVil25arX:Twisted}), there exists a distinguished projection $p\in C_r^*(G,\sigma)\otimes\mathcal{K}$ such that proving the desired converse is equivalent to answering the question: Is it true that $\tau(p)<\tau(1_{C_r^*(G,\sigma)})$ for every trace $\tau$ implies $p\precsim 1_{C_r^*(G,\sigma)}$?

The question above has a positive answer whenever $C_r^*(G,\sigma)$ is $\mathcal{Z}$-stable\footnote{These two properties are not equivalent: $C_r^*(\mathbb{Z},1)\cong C(\mathbb{T})$ is not $\mathcal{Z}$-stable but the question has a positive answer.}; see \autoref{subsec:DimRed}. Thus, combining \autoref{thm:NwSCTwist} with this operator algebraic translation, one obtains the following result in generalized time-frequency analysis (\cite[Theorem~5.1]{EnsVil25arX:Twisted}):

\emph{Let $G$ be a lattice of a connected, simply connected, nilpotent Lie group $H$, and let $\pi$ be a $\sigma$-projective, square-integrable, irreducible, unitary representation of $H$. Assume that $\sigma$ is non-rational on $G$. Then, there exists a frame $\pi (G)\gamma$ with $\gamma$ smooth whenever $d_\pi {\rm covol}(G)<1$.}
\end{pgr}

\section{The Global Glimm Property}

We now move to the second notion in the Global Glimm Problem. After stating its definition and main permanence properties, we give a brief proof that the Global Glimm Property implies nowhere scatteredness.

Given elements $s,r\in A$, we will write $s \lhd r$ if $s$ belongs to the closed ideal generated by $r$.

\begin{dfn}\label{dfn:GGP}
    A \ca{} $A$ has the \emph{Global Glimm Property} if for every $a\in A_+$ and every $\varepsilon>0$ there exists $r\in\overline{aAa}$ such that $r^2=0$ and $(a-\varepsilon)_+\lhd r$.
\end{dfn}

\begin{rmk}
    \autoref{dfn:GGP} appeared in \cite[Definition~1.2]{BlaKir04GlimmHalving} under the name \emph{Global Glimm Halving Property}.

    The original definition of the Global Glimm Property as given in \cite[Definition~4.12]{KirRor02InfNonSimpleCalgAbsOInfty} is formally different: In their paper, Kirchberg and R\o{}rdam say that a \ca{} satisfies the property if for every $a\in A_+$, every $\varepsilon>0$, and every integer $k\geq 2$, there exists a $^*$-homomorphism $M_k(C_0((0,1]))\to \overline{aAa}$ such that $(a-\varepsilon)_+$ is contained in the ideal generated by its image.

    One can show that these two formulations are indeed equivalent; see \cite[Theorem~3.6]{ThiVil23Glimm}.
\end{rmk}

\begin{exas}\label{exas:GGP} $ $
\begin{enumerate}
    \item Simple non-elementary \ca{s} have the Global Glimm Property. This is no longer a trivial consequence of the definition (as in \autoref{exas:NowScat}). Instead, it requires a proof, which can be extracted from Glimm's \cite[Lemma~4]{Gli61Type1}. It is precisely this result that gives the property its name.
    \item $\mathcal{Z}$-stable algebras have the Global Glimm Property. To see this, recall that every corner $\overline{aAa}$ of a $\mathcal{Z}$-stable algebra $A$ is again $\mathcal{Z}$-stable \cite[Corollary~3.1]{TomWin07ssa}. Thus, one can obtain a full square-zero element $r$ from $\mathcal{Z}$ (because it is simple and non-elementary) and view it in $\overline{aAa}\cong\overline{aAa}\otimes\mathcal{Z}$ as $a\otimes r$. Recall that a simple tensor is square-zero whenever one of its components is, and so $a\otimes r$ is square-zero.
\end{enumerate}
\end{exas}

\begin{prp}[{\cite[3.10-3.12]{ThiVil23Glimm}}]\label{prp:PermPropGGP} The following permanence properties hold:
\begin{enumerate}
    \item Morita equivalence: Let $A,B$ be Morita equivalent. Then, $A$ has the Global Glimm Property if and only if $B$ does.
    \item Extensions: The Global Glimm Property passes to hereditary sub-\ca{s} and quotients. Further, if $I$ is an ideal of a \ca{} $A$, then $A$ has the Global Glimm Property if and only if $I$ and $A/I$ do.
    \item Inductive limits: An inductive limit of \ca{s} with the Global Glimm Property has the Global Glimm Property.
\end{enumerate}
\end{prp}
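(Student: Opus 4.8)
The plan is to establish (2) first --- it feeds into (1) --- treating within it the ``downward'' closure properties before the two-out-of-three assertion, which carries the real content; then (3); and finally (1) by reduction to the earlier parts. For a quotient $\pi\colon A\to A/I$, given $\bar a\in(A/I)_+$ and $\varepsilon>0$, lift $\bar a$ to some $a\in A_+$, apply the Global Glimm Property of $A$ to obtain a square-zero $r\in\overline{aAa}$ with $(a-\varepsilon)_+\lhd r$, and push down: $\pi(r)$ is square-zero and lies in $\pi(\overline{aAa})=\overline{\bar a(A/I)\bar a}$, and since $\pi$ intertwines continuous functional calculus and carries closed ideals onto closed ideals, $(\bar a-\varepsilon)_+=\pi\big((a-\varepsilon)_+\big)\lhd\pi(r)$. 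For a hereditary sub-\ca{} $B\subseteq A$ and $a\in B_+$, one has $\overline{aAa}=\overline{aBa}\subseteq B$, so the Global Glimm Property of $A$ produces a square-zero $r$ inside this corner, hence inside $B$; since for positive elements of a hereditary sub-\ca{} the relation $\lhd$ is intrinsic (by the standard correspondence between ideals of $B$ and of $A$), $(a-\varepsilon)_+\lhd r$ holds in $B$ as well.

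\emph{Two out of three.} Suppose $I$ is an ideal of $A$ with both $I$ and $A/I$ having the Global Glimm Property; fix $a\in A_+$ and $\varepsilon>0$. Replacing $A$ by $\overline{aAa}$ --- whose ideal $\overline{aAa}\cap I$ and quotient $\overline{\bar a(A/I)\bar a}$ inherit the property by the previous paragraph, and for which the defining condition for $(a,\varepsilon)$ is unchanged --- we may assume $a$ is strictly positive, so that it suffices to produce a square-zero $r\in A$ with $(a-\varepsilon)_+\lhd r$. Apply the Global Glimm Property of $A/I$ to $\pi(a)$ to get a square-zero element of $A/I$ with $(\pi(a)-\varepsilon/2)_+$ in the ideal it generates, and lift it to a square-zero $s\in A$; here one invokes that square-zero contractions lift along surjections, i.e.\ projectivity of the cone $M_2(C_0((0,1]))$ over $M_2$ --- also the basis of the equivalence of \autoref{dfn:GGP} with the $M_k(C_0((0,1]))$-formulation. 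Writing $d:=s^*s+ss^*$, which generates the same closed ideal as $s$, a lifting-of-Cuntz-comparison-modulo-$I$ argument yields $e\in I_+$, an integer $n$, and $\delta>0$ with $(a-\varepsilon)_+\precsim d^{\oplus n}\oplus(e-\delta)_+$ in a matrix amplification; feeding $e$ into the Global Glimm Property of $I$ produces a square-zero $t\in\overline{eIe}\subseteq I$ with $(e-\delta)_+\lhd t$. It remains to merge $s$ and $t$ into a single square-zero element of $A$ whose closed ideal still contains that of $d^{\oplus n}\oplus(e-\delta)_+$, hence $(a-\varepsilon)_+$; morally one wants $s$ and $t$ orthogonal so as to pass to $s+t$, and making this precise --- orthogonalizing without losing the needed fullness --- is the delicate part.

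\emph{Inductive limits.} For $A=\varinjlim(A_i,\phi_i)$, $a\in A_+$, $\varepsilon>0$: approximate $a$ by $\phi_{i,\infty}(a_i)$ with $a_i\in(A_i)_+$, apply the Global Glimm Property of $A_i$ to $a_i$ with the parameter contracted via \autoref{prp:CuSubProp}(2) so the $\varepsilon$'s survive passage to the limit, and transport the resulting square-zero element forward; the only subtlety is to land it inside $\overline{aAa}$ rather than inside the nearby corner $\overline{\phi_{i,\infty}(a_i)A\phi_{i,\infty}(a_i)}$, handled by a routine compression.

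\emph{Morita equivalence, and the main obstacle.} By L.\,G.\ Brown's stable isomorphism theorem a Morita equivalence (say, of $\sigma$-unital \ca{s}) $A\sim B$ gives $A\otimes\Cpct\cong B\otimes\Cpct$, so by (2) it suffices to see that $A$ has the Global Glimm Property if and only if $A\otimes\Cpct$ does; the forward direction follows from (2) since $A$ is a corner of $A\otimes\Cpct$, and the reverse from (3) together with the fact that the Global Glimm Property of $A$ forces it on each $M_n(A)$ (a matrix-level rerun of the ideal bookkeeping above). I expect the two-out-of-three statement to be the crux --- concretely, the step of merging the two square-zero elements into one while keeping control of the ideal it generates (and the analogous difficulty for $A\otimes\Cpct$). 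As the author indicates, the cleanest route to all of (1)--(3) runs instead through the Cuntz-semigroup characterization of \autoref{sec:CuntzSgp}, under which they become standard structural properties of the functor $\Cu$: Morita invariance, continuity, and compatibility with ideals and quotients.
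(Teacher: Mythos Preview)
The paper does not actually prove this proposition: it is stated with a bare citation to \cite[3.10--3.12]{ThiVil23Glimm}, and the only indication of a method is the remark (\autoref{rmk:CX_NSCvsGGP}) that ``the most efficient proof relies on the Cuntz semigroup characterization presented in \autoref{sec:CuntzSgp}.'' Concretely, once \autoref{thm:EqChar} identifies the Global Glimm Property with $(2,\omega)$-divisibility of $\Cu(A)$, items (1)--(3) reduce to Morita invariance, continuity, and ideal/quotient compatibility of the functor $\Cu$ --- exactly the route you name in your final paragraph. So your overall assessment matches the paper's.

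Your direct approach is fine for the downward parts of (2) and for (3), modulo the corner bookkeeping you flag. But the two steps you yourself call ``the crux'' are genuine gaps, not mere details. In the extension argument you never carry out the orthogonalization-and-merger of $s$ and $t$ into a single square-zero element while preserving the ideal it generates; this is precisely where the direct attack becomes unpleasant and is the reason the cited proof passes through $\Cu$. In the Morita-equivalence argument, the passage from $A$ to $M_n(A)$ is not ``a matrix-level rerun of the ideal bookkeeping'': a positive element of $M_n(A)$ need not be Cuntz equivalent to anything sitting in a single corner, so producing a square-zero element in its hereditary sub-\ca{} from the Global Glimm Property of $A$ requires real work (again most cleanly via $\Cu(M_n(A))\cong\Cu(A)$). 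Finally, invoking Brown's theorem restricts (1) to the $\sigma$-unital case, whereas the proposition is stated in full generality; the $\Cu$-route avoids this as well.
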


As stated in the introduction, the Global Glimm Property always implies nowhere scatteredness. Since this is a crucial point of this survey, we provide a short proof here for the convenience of the reader.

\begin{lma}\label{prp:GGPimpNSCa}
    Let $A$ be a \ca{} with the Global Glimm Property. Then, it is nowhere scattered.
\end{lma}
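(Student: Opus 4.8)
The plan is to show that the Global Glimm Property forces condition (iii) of \autoref{prp:TracCharNSca}. By \autoref{prp:PermProp}~(1) we may pass to the stabilization $A\otimes\K$, which again has the Global Glimm Property by \autoref{prp:PermPropGGP}~(1). So assume $A$ is stable and fix a $2$-quasitrace $\tau$ on $A$ with associated dimension function $d_\tau$. Suppose $d_\tau(A_+)\not\subseteq\{0,\infty\}$; we must produce $c\in A_+$ with $0<d_\tau(c)<1$. Pick $b\in A_+$ with $0<d_\tau(b)<\infty$. Choosing $\varepsilon>0$ small enough that $(b-\varepsilon)_+$ still has nonzero dimension (possible since $d_\tau(b)=\lim_n d_\tau((b-1/n)_+)$), the Global Glimm Property applied to $b$ yields $r\in\overline{bAb}$ with $r^2=0$ and $(b-\varepsilon)_+\lhd r$.

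The key point is that $r$ and $r^*$ are orthogonal: indeed $r^*r\cdot rr^*=0$ since $r^2=0$, so $r^*r\perp rr^*$. Now $(b-\varepsilon)_+$ lies in the closed ideal generated by $r$, hence in the closed ideal generated by $r^*r$, hence $(b-\varepsilon)_+\precsim r^*r\oplus r^*r\oplus\cdots$ — more precisely, since $(b-\varepsilon)_+$ is a single element of the ideal generated by the positive element $r^*r$, for each $\delta>0$ we have $((b-\varepsilon)_+-\delta)_+\precsim (r^*r)^{\oplus m}$ for some $m$ (using \autoref{prp:CuSubProp}~(5) and that elements of an algebraically generated ideal are finite sums $\sum x_i (r^*r) y_i$). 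But $r^*r\sim rr^*$ via the polar decomposition of $r$ (that is, $r^*r\sim rr^*$ always holds for Cuntz equivalence), and $r^*r\perp rr^*$, so using \autoref{prp:CuSubProp}~(5) again, $(r^*r)^{\oplus 2}=r^*r\oplus rr^*\precsim r^*r+rr^*\precsim b$ (the last step because $r\in\overline{bAb}$ gives $r^*r,rr^*\in\overline{bAb}$, so $r^*r+rr^*\in\overline{bAb}$ and \autoref{prp:CuSubProp}~(3) applies). Iterating, $(r^*r)^{\oplus 2^k}\precsim b$ for every $k$, hence $2^k d_\tau(r^*r)\le d_\tau(b)<\infty$ for all $k$, forcing $d_\tau(r^*r)=0$. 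Then $d_\tau((b-\varepsilon)_+)=0$ as well since $(b-\varepsilon)_+$ is dominated by a multiple of $r^*r$ up to cutdowns — contradicting our choice of $\varepsilon$.

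This contradiction shows that if $A$ has the Global Glimm Property and is stable, then every quasitrace $\tau$ has $d_\tau(A_+)\subseteq\{0,\infty\}$, which is even stronger than condition (iii) of \autoref{prp:TracCharNSca}; hence $A$ is nowhere scattered, and therefore so is the original (non-stabilized) algebra. I expect the main technical obstacle to be the bookkeeping in passing from "$(b-\varepsilon)_+$ lies in the ideal generated by $r$" to a clean Cuntz subequivalence $((b-\varepsilon)_+-\delta)_+\precsim (r^*r)^{\oplus m}$: one must approximate $(b-\varepsilon)_+$ by a finite sum $\sum_{i=1}^m x_i^*(r^*r)x_i$ (writing ideal elements as such sums and using that $r^*r\sim rr^*\sim$ the generator $r$ in the Cuntz sense), then invoke \autoref{prp:CuSubProp}~(4) and (1) to control the error, and combine with \autoref{prp:CuSubProp}~(5) to absorb the direct sum into a single cutdown of $b$. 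Alternatively — and perhaps more cleanly for the survey — one can argue purely with dimension functions: $(b-\varepsilon)_+\lhd r$ implies $d_\tau((b-\varepsilon)_+)\le d_\tau$ evaluated on the ideal generated by $r^*r$, and since that ideal is generated by an element of dimension $0$, the whole ideal has $d_\tau\equiv 0$ on it (as $d_\tau$ is additive and monotone, the set of elements with $d_\tau=0$ is an ideal by \cite[Theorem~I.1.17]{BlaHan82DimFct}), giving the contradiction directly.
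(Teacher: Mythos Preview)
The setup and the key inequality $2d_\tau(r^*r)\leq d_\tau(b)$ are exactly what the paper uses, but your argument breaks at the ``iterating'' step. From $(r^*r)^{\oplus 2}\precsim b$ you \emph{cannot} deduce $(r^*r)^{\oplus 2^k}\precsim b$; at best you get $(r^*r)^{\oplus 2^k}\precsim b^{\oplus 2^{k-1}}$, which gives nothing new. Consequently the conclusion you reach --- that every quasitrace on a stable \ca{} with the Global Glimm Property has $d_\tau(A_+)\subseteq\{0,\infty\}$ --- is false: take $A=\mathcal{Z}\otimes\mathcal{K}$, which has the Global Glimm Property (\autoref{exas:GGP}) and carries a trace whose dimension function attains every value in $[0,\infty]$.

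There are two natural repairs. The cleanest, and the one the paper uses, is to argue via condition~(ii) rather than~(iii) of \autoref{prp:TracCharNSca}: if $A$ were not nowhere scattered there would exist $d_\tau$ with range exactly $\NN\cup\{\infty\}$, so one may pick $a\in A_+$ with $d_\tau(a)=1$; then $2d_\tau(r^*r)\leq 1$ together with \emph{integrality} of the range forces $d_\tau(r^*r)=0$, and your final ``ideal'' argument finishes. Alternatively, if you insist on verifying~(iii) directly, the correct iteration is to apply the Global Glimm Property \emph{again} to $r^*r$ (and then again, and so on): each application halves the $d_\tau$-value while keeping it strictly positive (by your own ideal argument), so after finitely many steps you land in $(0,1)$.
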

\begin{proof}
    Assume for the sake of contradiction that $A$ is not nowhere scattered, which by \autoref{prp:TracCharNSca} implies that there exists a dimension function $d_\tau$ on $A\otimes\mathcal{K}$ such that $d_\tau(A\otimes\mathcal{K})=\mathbb{N}\cup\{\infty\}$. Now, let $a\in (A\otimes\mathcal{K})_+$ be such that $d_\tau(a)=1$. By \autoref{prp:DFnotStb}, we may assume that $a\in A_+$.

    Let $\varepsilon>0$ be such that $d_\tau((a-\varepsilon)_+)=1$. Then, since $A$ satisfies the Global Glimm Property, there exists a square-zero element $r$ in $\overline{aAa}$ such that $(a-\varepsilon/2)_+$ is in the ideal generated by $r$. In particular, the positive element $r^*r$ satisfies
    \[
        r^*r\perp rr^*,\andSep 
        rr^*+r^*r\precsim a,
    \]
    because $r\in \overline{aAa}$.

    Thus, we have $2d_\tau(r^*r)=d_\tau(rr^*+r^*r)\leq d_\tau(a)=1$, which implies $d_\tau (r^*r)=0$ by our assumption. In particular, every element in the ideal generated by $r^*r$ lies in the kernel of $d_\tau$. This includes $(a-\varepsilon)_+$, a contradiction.
\end{proof}

\begin{qst}[The Global Glimm Problem]\label{qst:GGP}
    Does every nowhere scattered \ca{} have the Global Glimm Property?
\end{qst}

As noted in \cite[Problem~LXXIII]{SchTikWhi99arX:Prob}, one can rephrase \autoref{qst:GGP} as follows.

\begin{qst}
    Let $A$ be a \ca{} without finite dimensional representations, and let $a\in A_+$ and $\varepsilon>0$. Does there exist a square-zero element $r\in A$ such that $(a-\varepsilon)_+\lhd r$?
\end{qst}

This form of the problem is akin to \cite[Question~2]{EllRor06Perturb}, which asked the question in the unital case.

\subsection{\texorpdfstring{Purely infinite \ca{s}}{Purely infinite C*-algebras}}\label{subsec:PIAlgs}
We continue with the discussion from \autoref{subsec:WPIAlgs}. Namely, we show that any purely infinite \ca{} satisfies the Global Glimm Property, and we relate the Global Glimm Problem to the weakly purely infinite problem. Since we have not proved the equivalence between \autoref{dfn:GGP} and Kirchberg-R\o{}rdam's \cite[Definition~1.2]{BlaKir04GlimmHalving}, we do minimal changes to the proofs of \cite{KirRor02InfNonSimpleCalgAbsOInfty} to adapt them to the definition of the Global Glimm Property provided in these notes.

\begin{lma}[{\cite[Lemma~4.14]{KirRor02InfNonSimpleCalgAbsOInfty}}]\label{lma:PIimpGGP}
    Let $A$ be a purely infinite \ca{}. Then, it has the Global Glimm Property.
\end{lma}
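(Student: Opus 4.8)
The plan is to rerun the Kirchberg--R\o{}rdam argument, but recast directly in terms of \autoref{dfn:GGP}, using only that $A$, being purely infinite, is $1$-weakly purely infinite, i.e.\ that $a\sim a\oplus a$ in $M_2(A)$ for every $a\in A_+$, together with R\o{}rdam's lemma (\autoref{prp:CuSubProp}). Fix $a\in A_+$ and $\varepsilon>0$; we may assume $(a-\varepsilon)_+\neq 0$, since otherwise $r=0$ works. The organising remark is that the closed ideal generated by an element $r$ coincides with the one generated by $rr^*$, so that in order to obtain $(a-\varepsilon)_+\lhd r$ it suffices to pin down $rr^*$ up to Cuntz equivalence; this is what keeps the argument short.

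First I would assemble the data for the construction. Since clearly $(a-\varepsilon/2)_+\precsim a$, taking diagonal sums and invoking pure infiniteness gives $(a-\varepsilon/2)_+\oplus(a-\varepsilon/2)_+\precsim a\oplus a\sim a$ in $M_2(A)$. Applying \autoref{prp:CuSubProp}~(2) to this subequivalence with tolerance $\varepsilon/2$ then yields some $\delta>0$ and $s\in M_2(A)$ with
\[
    (a-\varepsilon)_+\oplus(a-\varepsilon)_+=s\bigl((a-\delta)_+\oplus 0\bigr)s^*.
\]
Only the first column of $s$ enters this equation, so writing $s_1,s_2\in A$ for its two entries and comparing the diagonal and off-diagonal coefficients gives $s_1(a-\delta)_+s_1^*=s_2(a-\delta)_+s_2^*=(a-\varepsilon)_+$ and $s_1(a-\delta)_+s_2^*=0$. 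Setting $t_i:=s_i(a-\delta)_+^{1/2}$ for $i=1,2$, this reads $t_1t_1^*=t_2t_2^*=(a-\varepsilon)_+$ and $t_1t_2^*=0$.

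Next I would take $r:=t_1^*t_2=(a-\delta)_+^{1/2}\,s_1^*s_2\,(a-\delta)_+^{1/2}$ and check the three requirements of \autoref{dfn:GGP}. Since $(a-\delta)_+\in\overline{aAa}$, one has $r\in\overline{(a-\delta)_+A(a-\delta)_+}\subseteq\overline{aAa}$. Using $t_2t_1^*=(t_1t_2^*)^*=0$ we get $r^2=t_1^*(t_2t_1^*)t_2=0$. Finally, $rr^*=t_1^*(t_2t_2^*)t_1=t_1^*(t_1t_1^*)t_1=(t_1^*t_1)^2$, which by standard properties of Cuntz equivalence is equivalent to $t_1^*t_1$, hence to $t_1t_1^*=(a-\varepsilon)_+$. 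Since $r$ and $rr^*$ generate the same closed ideal, this gives $(a-\varepsilon)_+\lhd r$, as required.

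The only genuinely delicate step --- and the one I would write out with care --- is the passage from the approximate relation $(a-\varepsilon/2)_+\oplus(a-\varepsilon/2)_+\precsim a$ to the exact identities satisfied by $t_1$ and $t_2$: one must apply R\o{}rdam's lemma at the level of $M_2(A)$, notice that only the first column of the transfer element contributes, and choose the tolerances so that the positive part of $(a-\varepsilon/2)_+-\varepsilon/2$ equals precisely $(a-\varepsilon)_+$. Everything after that is formal; in particular, the clean identity $rr^*=(t_1^*t_1)^2$ is exactly what makes the ideal containment fall out without having to analyse $r^*r$. This is a streamlined version of the original argument in \cite{KirRor02InfNonSimpleCalgAbsOInfty}, arranged to land directly on the formulation of the Global Glimm Property used in these notes.
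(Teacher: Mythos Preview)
Your proof is correct and follows essentially the same approach as the paper: both use $a\oplus a\precsim a$ together with \autoref{prp:CuSubProp}~(2) to produce $s_1,s_2$ with $s_i(a-\delta)_+s_i^*=(a-\varepsilon)_+$ and $s_1(a-\delta)_+s_2^*=0$, and then set $r$ to be (up to the cutdown) $a^{1/2}s_1^*s_2a^{1/2}$. Your version is slightly more explicit about the $\delta$-cutdown and your computation $rr^*=(t_1^*t_1)^2\sim(a-\varepsilon)_+$ is marginally cleaner than the paper's $(a-\varepsilon)_+^3=(s_1a^{1/2})rr^*(s_1a^{1/2})^*\precsim rr^*$, but these are cosmetic differences; the detour through $(a-\varepsilon/2)_+$ is unnecessary since $a\oplus a\precsim a$ can be used directly.
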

\begin{proof}
    Let $a\in A_+$ and $\varepsilon>0$. By definition, we have $a^{\oplus 2}\precsim a$. In particular, it follows from \autoref{prp:CuSubProp}~(2) that $sas^*=(a-\varepsilon)_+^{\oplus 2}$ for some $s\in M_{2,1} (A)$. In other words, there exist $s_1,s_2$ in $A$ such that
    \[
        \begin{pmatrix}
            s_1 a s_1^* & s_1 as_ 2^* \\
            s_2 a s_1^* & s_2 a s_2^*
        \end{pmatrix} = 
        \begin{pmatrix}
            (a-\varepsilon)_+ & 0\\
            0 & (a-\varepsilon)_+
        \end{pmatrix}.
    \]

    Set $r=a^{1/2}s_1^* s_2 a^{1/2}$. Then, $r^2=0$ and 
    \[
        (a-\varepsilon)_+\sim (a-\varepsilon)_+^3 = 
        (s_1 a s_1^*)(s_2 a s_2^*)(s_1 a s_1^*) = 
        (s_1 a^{1/2})rr^* (s_1 a^{1/2})^*\precsim rr^*.
    \]
    
    This implies that $(a-\varepsilon)_+$ is in the ideal generated by $r$, a square-zero element in $\overline{aAa}$.
\end{proof}

\begin{prp}[{\cite[Proposition~4.15]{KirRor02InfNonSimpleCalgAbsOInfty}}]\label{prp:GGPandPIP}
    A \ca{} is purely infinite if and only if it is weakly purely infinite and satisfies the Global Glimm Property.
\end{prp}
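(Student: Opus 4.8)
The statement to prove is \autoref{prp:GGPandPIP}: a \ca{} is purely infinite if and only if it is weakly purely infinite and satisfies the Global Glimm Property.

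\medskip

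\textbf{Proof proposal.} The forward direction is immediate from what precedes: a purely infinite \ca{} is $1$-weakly purely infinite by definition, hence weakly purely infinite, and it has the Global Glimm Property by \autoref{lma:PIimpGGP}. So the content lies in the converse: assuming $A$ is $n$-weakly purely infinite for some $n$ and has the Global Glimm Property, I want to conclude $a^{\oplus 2}\precsim a$ for every $a\in A_+$, i.e.\ that $A$ is $1$-weakly purely infinite. By \autoref{prp:CuSubProp}~(1) it suffices to show $(a-\varepsilon)_+^{\oplus 2}\precsim a$ for every $\varepsilon>0$, so fix $a\in A_+$ and $\varepsilon>0$.

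\medskip

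The plan is to use the Global Glimm Property to "create room": applying \autoref{dfn:GGP} to $a$ with tolerance $\varepsilon$, I obtain $r\in\overline{aAa}$ with $r^2=0$ and $(a-\varepsilon)_+\lhd r$. Set $b=r^*r+rr^*\in\overline{aAa}_+$; since $r^2=0$ we have $r^*r\perp rr^*$, and by \autoref{prp:CuSubProp}~(5) this gives $r^*r\oplus rr^*\precsim (r^*r+rr^*)\oplus 0 = b$, while also $b\precsim a$ because $b\in\overline{aAa}$. More importantly, $(a-\varepsilon)_+$ lies in the ideal generated by $r$, which is the ideal generated by $r^*r$ (equivalently by $b$). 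The key point is that the element $r$ witnesses, inside $\overline{aAa}$, a copy of "two orthogonal halves", and iterating the halving one controls arbitrarily many orthogonal copies. Concretely, the standard argument (this is the Kirchberg--R\o{}rdam mechanism) shows that from the Global Glimm Property one gets, for each $k$, that $(a-\varepsilon)_+$ is Cuntz-below a positive element of the form $c_1+\cdots+c_k$ with the $c_i$ pairwise orthogonal and each $c_i\precsim a$; equivalently $(a-\varepsilon)_+\precsim c^{\oplus k}$ for some $c\precsim a$, i.e.\ $(a-\varepsilon)_+\precsim a^{\oplus k}$ with the subequivalence factoring through $k$ orthogonal pieces sitting inside $\overline{aAa}$. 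I would isolate this as the lemma: \emph{if $A$ has the Global Glimm Property then for every $a\in A_+$, every $\varepsilon>0$ and every $k\in\NN$ there is $b\in\overline{aAa}_+$ with $b\precsim a$ and $(a-\varepsilon)_+\precsim b^{\oplus k}$}, proved by induction on $k$ using that the Global Glimm Property also holds in the hereditary sub-\ca{} $\overline{bAb}$ for each piece (\autoref{prp:PermPropGGP}~(2)).

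\medskip

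Now feed this into $n$-weak pure infiniteness. Choose $k$ large relative to $n$, say $k\ge 2n$ (or iterate, splitting each of the $k$ orthogonal pieces further if one wants a cleaner bookkeeping). For each orthogonal piece $c_i\precsim a$ we have, since $A$ is $n$-weakly purely infinite applied to $c_i$, that $c_i^{\oplus n}\sim c_i^{\oplus(n+1)}$, and by induction $c_i^{\oplus n}\sim c_i^{\oplus m}$ for all $m\ge n$; in particular $c_i\precsim c_i^{\oplus n}\precsim a^{\oplus n}\cdot(\text{one slot})$, which lets one absorb orthogonal sums. The upshot is that $b^{\oplus k}$, for $b\precsim a$, Cuntz-dominates $b^{\oplus(k+1)}$ and more, while on the other hand $b\oplus b\precsim b^{\oplus k}$ (using $k\ge 2$ and orthogonality to split slots, \autoref{prp:CuSubProp}~(5)). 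Combining: $(a-\varepsilon)_+^{\oplus 2}\precsim b^{\oplus 2k}\precsim b^{\oplus k}$ by $n$-weak pure infiniteness of the piece $b$ (since $2k\ge k\ge n$), and $b^{\oplus k}\precsim a$ because the $k$ orthogonal copies of $b$ all sit inside $\overline{aAa}$ and add up there. Hence $(a-\varepsilon)_+^{\oplus 2}\precsim a$, and letting $\varepsilon\to 0$ via \autoref{prp:CuSubProp}~(1) gives $a^{\oplus 2}\precsim a$, so $A$ is purely infinite.

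\medskip

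\textbf{Main obstacle.} The delicate step is the inductive construction of the $k$ pairwise orthogonal copies of a single $b\precsim a$ all living inside $\overline{aAa}$ and summing there --- i.e.\ turning the single square-zero Glimm element $r$ into genuine orthogonality of many pieces with simultaneous control $b^{\oplus k}\precsim a$. One has to be careful that after passing to a hereditary sub-\ca{} and applying the Global Glimm Property again, the new square-zero element and the pieces it produces remain orthogonal to the pieces already constructed, and that the Cuntz estimates track the ideal-containment $(a-\varepsilon)_+\lhd r$ through all the steps (this is exactly where one needs the precise form of \autoref{dfn:GGP}, namely that $(a-\varepsilon)_+$ lands in the ideal \emph{generated by} $r$, not merely that $r\ne 0$). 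Everything else is routine manipulation with \autoref{prp:CuSubProp}.
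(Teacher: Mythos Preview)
Your strategy is the right one, but the lemma you isolate is misstated and, in the form you later use it, actually false. What the Global Glimm Property (in its $M_k$ form, obtained by iterating the halving as you sketch) gives you is $k$ pairwise orthogonal, pairwise Cuntz-equivalent elements in $\overline{aAa}_+$ --- so a $b$ with $b^{\oplus k}\precsim a$ --- together with $(a-\varepsilon)_+\lhd b$, i.e.\ \emph{ideal containment}. You cannot upgrade this to $(a-\varepsilon)_+\precsim b^{\oplus k}$ from the Global Glimm Property alone: in $\mathcal{Z}$ with $a=1$, the class $[1]\in\Cu(\mathcal{Z})$ is compact and no $b$ with $k[b]\leq[1]$ (for $k\geq 2$) satisfies $[1]\leq k[b]$. (Your lemma as literally written, with only ``$b\precsim a$'', is on the other hand trivially true with $b=a$; the content you need and later invoke is the stronger $b^{\oplus k}\precsim a$.) The fix is to keep ideal containment, use compactness to get $(a-\varepsilon')_+\precsim b^{\oplus m}$ for some uncontrolled $m$, and only \emph{then} invoke $n$-weak pure infiniteness to collapse $b^{\oplus m}\sim b^{\oplus n}$; with $k\geq n$ your chain $(a-\varepsilon')_+^{\oplus 2}\precsim b^{\oplus 2n}\sim b^{\oplus n}\precsim b^{\oplus k}\precsim a$ then goes through.

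The paper's argument is more economical and avoids exactly the obstacle you flag. Rather than building $k$ orthogonal pieces and jumping straight to pure infiniteness, it uses a \emph{single} square-zero $r\in\overline{aAa}$ to pass from $n$-weakly purely infinite to $(n-1)$-weakly purely infinite, and then inducts on $n$. From $(a-\varepsilon)_+\lhd r$ one gets $(a-\varepsilon)_+^{\oplus n}\precsim (rr^*)^{\oplus m}\sim (rr^*)^{\oplus n}$, and the key observation is that $(rr^*)^{\oplus n}\sim (rr^*)^{\oplus(n-1)}\oplus r^*r$ with both summands lying in the hereditary sub-\ca{} of $a^{\oplus(n-1)}$; hence $a^{\oplus n}\precsim a^{\oplus(n-1)}$. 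This sidesteps the iterated Glimm construction entirely and does not rely on the equivalence of \autoref{dfn:GGP} with the $M_k$-formulation, which the survey cites but does not prove.
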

\begin{proof}
    Any purely infinite algebra is weakly purely infinite (by definition) and has the Global Glimm Property (by \autoref{lma:PIimpGGP}).

    Thus, assume that $A$ satisfies the Global Glimm Property and is $n$-weakly purely infinite for some $n> 1$. We will show that $A$ is $(n-1)$-weakly purely infinite, and then an inductive argument shows that $A$ is purely infinite.
    
    Take $a\in A_+$ and let $\varepsilon>0$. Using the Global Glimm Property, find a square-zero element $r\in\overline{aAa}$ such that $(a-\varepsilon)_+$ is in the ideal generated by $r$. In particular, $(a-\varepsilon)_+^{\oplus n}$ is in the ideal generated by $(rr^*)\oplus 0_{n-1}$ in $M_n (A)$. Thus, it follows that there exists some $m\in\NN$ such that $(a-\varepsilon)_+^{\oplus n}\precsim (rr^*)^{\oplus m}$. As $A$ is $n$-weakly purely infinite, we have $(rr^*)^{\oplus m}\sim (rr^*)^{\oplus n}$.

    Now, using that $r$ is square-zero in the second step, we get
    \[
        (a-\varepsilon)_+^{\oplus n}\precsim (rr^*)^{\oplus n}\sim (rr^*)^{\oplus (n-1)}+(r^*r)\oplus 0_{n-1}.
    \]

    By construction, each of the two summands is contained in the hereditary sub-\ca{} of $a^{\oplus (n-1)}$, which implies that
    \[
         (a-\varepsilon)_+^{\oplus n}\precsim (rr^*)^{\oplus (n-1)}+(r^*r)\oplus 0_{n-1}\precsim a^{\oplus (n-1)}.
    \]

    Since this holds for any given $\varepsilon$, it follows that $a^{\oplus n}\precsim a^{\oplus (n-1)}$, as desired.
\end{proof}

\begin{cor}
    The following statements are equivalent:
    \begin{itemize}
        \item[(i)] The weakly purely infinite problem has a positive answer.
        \item[(ii)] The Global Glimm Problem has a positive answer for weakly purely infinite \ca{s}.
    \end{itemize}
\end{cor}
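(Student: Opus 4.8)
The plan is to derive the equivalence directly from \autoref{prp:GGPandPIP}, using the (already recorded) fact that every weakly purely infinite \ca{} is nowhere scattered. In particular, restricted to the class of weakly purely infinite \ca{s}, the Global Glimm Problem is simply the assertion that every such algebra has the Global Glimm Property, since the nowhere scatteredness hypothesis built into the problem is automatic here.

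For the implication (i) $\Rightarrow$ (ii), I would take an arbitrary weakly purely infinite \ca{} $A$. By (i), $A$ is purely infinite, so \autoref{lma:PIimpGGP} (equivalently, the forward direction of \autoref{prp:GGPandPIP}) shows that $A$ has the Global Glimm Property; as $A$ was arbitrary, (ii) holds. For the converse (ii) $\Rightarrow$ (i), I would again start with an arbitrary weakly purely infinite \ca{} $A$. By (ii), $A$ has the Global Glimm Property, so $A$ is both weakly purely infinite and satisfies the Global Glimm Property, and the reverse direction of \autoref{prp:GGPandPIP} gives that $A$ is purely infinite.

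There is no substantial obstacle: both directions are one-line applications of \autoref{prp:GGPandPIP}. The only point meriting a remark is the reduction mentioned above, namely that statement (ii) should be read as ``every weakly purely infinite \ca{} has the Global Glimm Property'', which is legitimate precisely because weak pure infiniteness already forces nowhere scatteredness.
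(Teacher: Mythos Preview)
Your proposal is correct and matches the paper's intended argument: the corollary is stated without proof precisely because both directions are immediate from \autoref{prp:GGPandPIP} (together with \autoref{lma:PIimpGGP} and the fact that weakly purely infinite \ca{s} are nowhere scattered), exactly as you outline.
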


\section{Applications and known results}\label{sec:ResAndAppl}

This section is divided in two disjoint parts. First, we discuss some dimension reduction phenomena and focus in particular on the so-called \emph{non-simple Toms-Winter conjecture} and the \emph{ranks problem}. We then list known results of the Global Glimm Problem.

\subsection{Dimension reduction phenomena and soft operators}\label{subsec:DimRed}

As mentioned in the introduction, dimension reduction phenomena are what motivated the definitions of both nowhere scatteredness and the Global Glimm Property. We have already encountered a dimension reduction question in these notes: the weakly purely infinite problem, which asks whether $n$-weak pure infiniteness implies $1$-weak pure infiniteness. In the setting of simple \ca{s}, one of the most famous problems related to such phenomena is the \emph{Toms-Winter conjecture}. To introduce it, we first have to recall the notion of \emph{pureness} and \emph{strict comparison}.

\begin{pgr}[Pure \ca{s}]
Following \cite{CowEllIva08CuInv}, recall that the \emph{Cuntz semigroup} of $A$ is given by the quotient $\Cu (A):=(A\otimes\mathcal{K})_+/\sim$. In this set, we define a \emph{partial order} by writing $[a]\leq [b]$ whenever $a\precsim b$. Using that there is a unique isomorphism $\phi\colon \K\otimes M_2\to\K$ up to unitary equivalence, we also equip $\Cu (A)$ with the (well-defined) \emph{addition} $[a]+[b]:=[{\rm id}_A\otimes\phi(a\oplus b)]$.

Given elements $[a],[b]\in\Cu (A)$, we write $[a]\ll [b]$ if there exists $\varepsilon>0$ such that $a\precsim (b-\varepsilon)_+$.\footnote{This relation is known as \emph{compact containment}, and can be characterized intrinsically in terms of the order and suprema of increasing sequences in the Cuntz semigroup. Such characterization plays a crucial role when developing the theory of abstract Cuntz semigroups, also known as \emph{$\Cu$-semigroups}.} The following definitions are from \cite{Win12NuclDimZstable}, with slight modifications from \cite{RobTik17NucDimNonSimple}:
\begin{itemize}
    \item We say that $\Cu (A)$ is \emph{almost divisible} if, for every $n\in\NN$ and for every pair $x',x\in\Cu (A)$ such that $x'\ll x$, there exists $d\in\Cu (A)$ such that
    \[
        nd\leq x,\andSep x'\leq (n+1)d,
    \]
    that is, $d$ is `almost' $\frac{1}{n}x$.
    \item $\Cu (A)$ is \emph{almost unperforated} if, whenever $x,y\in\Cu (A)$ satisfy $(n+1)x\leq ny$ for some $n\in\NN$, one has $x\leq y$.
    \item A \ca{} $A$ (and its Cuntz semigroup $\Cu (A)$) is said to be \emph{pure} if $\Cu (A)$ is both almost unperforated and almost divisible.
\end{itemize}

As shown in \cite[Proposition~6.2]{EllRobSan11Cone}, $\Cu (A)$ is almost unperforated if, and only if, for every pair of elements $a,b\in (A\otimes\mathcal{K})_+$, one has
\[
\left.
\begin{array}{l}
    a\text{ is in the ideal generated by }b\text{; and} \\
    d_\tau (a)<d_\tau(b) \text{ for all }\tau\in{\rm QT}(A)\text{ such that }d_\tau(b)=1
\end{array}
\right\}
\implies 
a\precsim b,
\]
where recall that ${\rm QT}(A)$ denotes the set of $[0,\infty]$-valued lower semicontinuous $2$-quasitraces.

This condition is referred to as \emph{strict comparison} (of positive elements by quasitraces) in the non-simple case. In the case of simple \ca{s}, the first condition can be dispensed with.
\end{pgr}

\begin{pgr}[The conjecture]
The Toms–Winter conjecture emerged within the context of the \emph{Elliott classification program}. While we have deliberately refrained from discussing this foundational program which has shaped the theory of \ca{s} over the past several decades, we strongly encourage interested readers to consult the excellent ICM proceedings on the subject \cite{Ror07ICM, Whi23ICM, Win18ICM}. The conjecture itself originates from a paper by Toms and Winter \cite{TomWin09Villadsen}, and poses the question of whether three key regularity properties coincide.

\begin{conj}[The Toms-Winter conjecture]
    For a unital, separable, simple, nuclear, non-elementary \ca{} $A$, the following conditions are equivalent:
    \begin{itemize}
        \item[(i)] $A$ has finite nuclear dimension;
        \item[(ii)] $A$ is $\mathcal{Z}$-stable;
        \item[(iii)] $A$ has strict comparison.
    \end{itemize}
\end{conj}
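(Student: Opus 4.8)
A complete proof is of course not available --- this is the Toms--Winter conjecture --- so what follows is the strategy I would pursue, of which two of the three implications are already theorems for \ca{s} of the stated type and the third is the crux. I would organize the argument around the cycle $(i)\Rightarrow(ii)\Rightarrow(iii)\Rightarrow(i)$. Since $(i)\Leftrightarrow(ii)$ and $(ii)\Rightarrow(iii)$ are known, the conjecture reduces entirely to the single implication $(iii)\Rightarrow(ii)$: that strict comparison of a unital, separable, simple, nuclear, non-elementary \ca{} forces $\mathcal{Z}$-stability.

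First the known implications. For $(ii)\Rightarrow(iii)$ I would argue exactly as in the discussion preceding the conjecture: $A\cong A\otimes\mathcal{Z}$ gives $\Cu(A)\cong\Cu(A\otimes\mathcal{Z})$, R\o{}rdam's theorem \cite{Ror04StableRealRankZ} makes the Cuntz semigroup of a $\mathcal{Z}$-stable \ca{} almost unperforated (indeed pure), and the characterization of almost unperforation recalled above is precisely strict comparison --- no simplicity or nuclearity is used. For $(i)\Rightarrow(ii)$ I would invoke Winter's theorem \cite{Win12NuclDimZstable}: finite nuclear dimension provides enough regularity of the central sequence algebra $A_\omega\cap A'$ --- through Matui--Sato's property (SI) and the attendant comparison arguments --- to produce a unital embedding of $\mathcal{Z}$, which is equivalent to $\mathcal{Z}$-stability. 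The converse $(ii)\Rightarrow(i)$, that $\mathcal{Z}$-stable plus nuclear forces nuclear dimension at most $1$, is the Castillejos--Evington--Tikuisis--White--Winter theorem \cite{CasEviTikWhiWin21NucDimSimple}, proved via complemented partitions of unity; I would import it wholesale.

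This leaves $(iii)\Rightarrow(ii)$, where the real work lies. The plan, going back to Matui--Sato and developed subsequently by many hands, is again to analyse $A_\omega\cap A'$ and its image in the trace-kernel quotient: strict comparison together with nuclearity should yield property $\Gamma$ for that quotient, after which one wants to build a unital copy of $\mathcal{Z}$ inside $A_\omega\cap A'$ --- equivalently, to produce the requisite complemented partitions of unity --- by excising local data over the extreme boundary $\partial_e T(A)$ of the trace simplex. The step I expect to be the main obstacle is precisely the geometry of $\partial_e T(A)$: the patching is understood when this boundary is finite-dimensional (for instance when $T(A)$ is a Bauer simplex of finite covering dimension) and in several further special cases, but for a general trace simplex --- where $\partial_e T(A)$ need not even be closed --- there is no known local-to-global principle that assembles the local data into a global $\mathcal{Z}$-action. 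Overcoming this would require either a new structure theory for the associated tracial von Neumann bundle over a badly behaved base, or an argument that bypasses the trace space altogether; in the non-simple analogue of the conjecture discussed below, this is exactly the point at which the Global Glimm Property is expected to supply the missing geometric input.
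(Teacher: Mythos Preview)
Your proposal is correct and matches the paper's treatment: the paper presents this as a conjecture, not a theorem, and after stating it records exactly the same known implications with the same attributions --- R\o{}rdam for $(ii)\Rightarrow(iii)$, Winter for $(i)\Rightarrow(ii)$, Castillejos--Evington--Tikuisis--White--Winter for $(ii)\Rightarrow(i)$ --- and identifies $(iii)\Rightarrow(ii)$ as the sole remaining open implication. Your discussion of the Matui--Sato strategy and the trace-simplex obstruction for $(iii)\Rightarrow(ii)$ goes beyond what the paper says (it merely cites partial results), but this is consistent elaboration rather than a different approach.
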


By now, this conjecture is almost a theorem: R\o{}rdam proved that any $\mathcal{Z}$-stable algebra (simple or not) has strict comparison in \cite[Theorem~4.5]{Ror04StableRealRankZ}; Winter showed in \cite{Win12NuclDimZstable} that (i) implies (ii); and Castillejos, Evington, Tikuisis, White and Winter proved the converse in \cite{CasEviTikWhiWin21NucDimSimple}. In other words, all that remains is (iii) implies (ii), although there are many partial results on the subject \cite{KirRor14CentralSeq,MatSat12StrComparison,Sat12arx:TraceSpace,Thi20RksOps,TomWhiWin15ZStableFdBauer}. A surprising and extremely deep fact that follows by combining \cite{CasEviTikWhiWin21NucDimSimple,Win12NuclDimZstable} is that a unital separable simple (non-elementary) \ca{} has finite nuclear dimension if and only if it has nuclear dimension at most $1$. This, once again, is a dimension reduction phenomenon.

Motivated in part by the considerable attention the Toms–Winter conjecture has garnered over the past decade, it is natural to ask whether some of these notions also coincide in the setting of nowhere scattered \ca{s}. This is a necessary condition to assume, since $\mathcal{Z}$-stability implies nowhere scatteredness (\autoref{exas:NowScat}).

\begin{qst}\label{qst:NonSimpTW}
    Let $A$ be a unital, separable, nuclear, nowhere scattered \ca{}. Are the following equivalent?
    \begin{itemize}
        \item[(i)] $A$ has finite nuclear dimension.
        \item[(ii)] $A$ is $\mathcal{Z}$-stable.
        \item[(iii.1)] $A$ is \emph{Cuntz semigroup regular} \cite[Problem~LXXV]{SchTikWhi99arX:Prob}, that is, the first factor map $A\to A\otimes\mathcal{Z}$ given by $x\mapsto x\otimes 1$ induces an isomorphism $\Cu (A)\cong\Cu (A\otimes\mathcal{Z})$.
        \item[(iii.2)] $A$ is pure.
        \item[(iii.3)] $A$ has strict comparison.
    \end{itemize}
\end{qst}

Conditions (iii.1)–(iii.3) each capture some aspect of regularity within the Cuntz semigroup, and determining whether they are equivalent is, in itself, a question of independent interest (also in the simple case). Beyond the trivial downward implications, it is known that (iii.2) and (iii.3) coincide under the assumption of stable rank one \cite{AntPerRobThi22CuntzSR1}. Further, (iii.1) and (iii.2) are equivalent for unital stably finite simple \ca{s}. The only proof of this fact so far (\cite[Proposition~22]{SchTikWhi99arX:Prob}) uses that pureness implies stable rank one for such \ca{s} \cite{Lin25Sr1}.
\end{pgr}

Let $\omega$ be a free ultrafilter on $A$, and let $A_\omega$ denote its ultraproduct \cite[II.8.1.7]{Bla06OpAlgs}. Recall that the \emph{central sequence algebra of $A$} is defined as $F(A):=(A'\cap A_\omega)/(A^\perp\cap A_\omega)$. This construction, introduced by Kirchberg in \cite[Definitions~1.1]{Kir06CentralSeqPI}, has become a powerful tool for analyzing regularity properties of \ca{s}. Of particular interest to us is the connection of $F(A)$ and the implication `(i)$\implies$(ii)' in \autoref{qst:NonSimpTW}. More concretely, one can use results from \cite{RobTik17NucDimNonSimple} to reduce this implication to an instance of the Global Glimm Problem:

\begin{thm}\label{prp:RobTikGGP}
    Let $A$ be a separable \ca{} of finite nuclear dimension. Then,
    \begin{enumerate}
        \item if $A$ is nowhere scattered, so is $F(A)$.
        \item if $F(A)$ has the Global Glimm Property, $A$ is $\mathcal{Z}$-stable.
    \end{enumerate}
\end{thm}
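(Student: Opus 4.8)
The plan is to deduce both statements from results of Robert--Tikuisis~\cite{RobTik17NucDimNonSimple} combined with the permanence properties established earlier in this survey, treating $F(A)$ as a ``small'' algebra whose regularity controls that of $A$. For part (1), the key input is the structure theory of $F(A)$ for $A$ of finite nuclear dimension: by~\cite{RobTik17NucDimNonSimple} (building on Kirchberg's work), finite nuclear dimension forces $F(A)$ to be, up to the relevant identifications, an algebra whose primitive ideal space and fibres are controlled by those of $A$; more precisely one shows that every nonzero hereditary sub-\ca{} of $F(A)$, after cutting down appropriately, admits a $^*$-homomorphism from (a corner of) $A$ or one of its quotients with large enough image. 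Since $A$ is nowhere scattered, no such hereditary sub-\ca{} can admit a finite-dimensional irreducible representation (by \autoref{thm:MultChar}~(v) applied to $A$ and its quotients, together with \autoref{prp:PermProp}~(2)), and one transports this to $F(A)$: a finite-dimensional irreducible representation of a hereditary sub-\ca{} of $F(A)$ would pull back to produce one for a hereditary sub-\ca{} of $A$, a contradiction. Thus $F(A)$ satisfies \autoref{thm:MultChar}~(v) and is nowhere scattered.

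For part (2), the strategy is to invoke the characterization of $\mathcal{Z}$-stability for separable \ca{s} of finite nuclear dimension in terms of $F(A)$, again from~\cite{RobTik17NucDimNonSimple}: $A$ is $\mathcal{Z}$-stable if and only if $F(A)$ (or a suitable sequence-algebra variant) contains, with the right fullness, unital copies of matrix algebras or dimension-drop algebras in a way that can be glued along the ideal structure of $A$. The Global Glimm Property for $F(A)$ supplies exactly the Glimm-halving data needed: for every $a\in F(A)_+$ and $\varepsilon>0$ one obtains a $^*$-homomorphism $M_k(C_0((0,1]))\to\overline{aF(A)a}$ whose image generates an ideal containing $(a-\varepsilon)_+$ (using the Kirchberg--R\o{}rdam reformulation of \autoref{dfn:GGP}). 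Feeding these cone-over-matrix-algebra embeddings into the Robert--Tikuisis machinery --- which converts ``enough'' such embeddings in the central sequence algebra of a finite-nuclear-dimension \ca{} into $\mathcal{Z}$-stability --- yields that $A\otimes\mathcal{Z}\cong A$. One should note that finite nuclear dimension is used twice: once to get the structural grip on $F(A)$ needed in part (1), and once more here to run the $\mathcal{Z}$-stability criterion, which is not available for general separable \ca{s}.

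The main obstacle is part (2): one must verify that the Glimm-halving homomorphisms produced by the Global Glimm Property are ``full enough'' and compatible across the ideal lattice of $A$ to trigger the Robert--Tikuisis criterion, since that criterion typically requires unital embeddings of dimension-drop algebras into $F(A)$ rather than merely ideal-generating images of cones $M_k(C_0((0,1]))$. Bridging this gap requires combining the Global Glimm Property of $F(A)$ with an almost-divisibility or comparison argument in $\Cu(F(A))$ (so that the cone embeddings can be ``rounded up'' to honest unital matrix or dimension-drop subalgebras), and checking that the construction respects the quotients $F(A)/I$ so that no ideal is left out --- this is precisely where the permanence of the Global Glimm Property under passage to ideals and quotients (\autoref{prp:PermPropGGP}~(2)) is essential. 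For part (1) the only subtlety is making the pullback of representations precise, which is routine given the structural description of $F(A)$; I would state this as a lemma and cite~\cite{RobTik17NucDimNonSimple} for the structural facts rather than reproving them.
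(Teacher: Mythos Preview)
Your sketch for part~(1) rests on a structural claim about $F(A)$ that does not exist in the literature: there is no result in \cite{RobTik17NucDimNonSimple} (or elsewhere) asserting that hereditary sub-\ca{s} of $F(A)$ receive $^*$-homomorphisms from corners of $A$ with ``large enough image'', nor any mechanism for pulling back finite-dimensional irreducible representations of $F(A)$ to $A$. The paper's actual argument is Cuntz-theoretic and tracial: combining \cite[Theorem~6.1]{RobTik17NucDimNonSimple} with \cite[Lemma~5.3]{AntPerRobThi24TracesUltra} one obtains, for all $k,N$, an element $c\in F(A)_+$ with $c^{\oplus k}\precsim 1_{F(A)}$ and a complementary subequivalence; then Kirchberg's \cite[Proposition~1.12]{Kir06CentralSeqPI} embeds the separable sub-\ca{} generated by these data into $F(A)\cap C^*(a)'$ for any $a\in F(A)_+$, allowing one to ``multiply through by $a$'' and deduce that every dimension function on $F(A)$ either takes only the values $\{0,\infty\}$ or attains a value in $(0,1)$. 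Nowhere scatteredness of $F(A)$ then follows from \autoref{prp:TracCharNSca}. Your representation-theoretic pullback idea does not lead to a proof.

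For part~(2) you have made the problem far harder than it is, and the ``main obstacle'' you identify is an artifact of the wrong route. One does not need to feed cone embeddings over matrix algebras into any machinery or worry about rounding up to dimension-drop algebras across the ideal lattice. The paper simply applies the Global Glimm Property once, to the \emph{unit} of $F(A)$ with any $\varepsilon<1$, obtaining a full square-zero element $r\in F(A)$. Then $rr^*$ and $r^*r$ are orthogonal full positive elements of $F(A)$, and the existence of such a pair (the so-called Glimm Halving Property for $F(A)$) is already known to force $\mathcal{Z}$-stability of $A$ when $A$ has finite nuclear dimension; the paper cites \cite[Theorem~1.2]{RobTik11HilbCommutative} and, in a footnote, \cite[Proposition~6.1]{KirRor15CentralSeqCharacters}. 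No compatibility across quotients, no almost-divisibility upgrade, and no use of \autoref{prp:PermPropGGP} are required.
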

\begin{proof}[Sketch of the proof]
    To prove (1), assume that $A$ is nowhere scattered. Then, it follows as a combination of \cite[Theorem~6.1]{RobTik17NucDimNonSimple} and \cite[Lemma~5.3]{AntPerRobThi24TracesUltra} that, for every $k,N\in\NN$ there exists $c\in F(A)_+$ and $s\in\NN$ such that
    \[
        c^{\oplus k}\precsim 1_{F(A)} , \andSep 
        1_{F(A)}^{\oplus Ns}\precsim (c^{\oplus Nk}\oplus 1_{F(A)})^{\oplus s}.
    \]

    Let $C$ be a separable sub-\ca{} of $F(A)$ containing $1_{F(A)}$, $c$, and all the elements required to witness the previous Cuntz subequivalences.\footnote{For example, for every $i\in\NN_{>0}$ the subalgebra $C$ has to contain all the entries of some  matrices $M_i$ such that $\Vert c^{\oplus k}- M_i(1_{F(A)}\oplus 0_{k-1})M_i^*\Vert\leq \frac{1}{i}$.} For any $a\in F(A)_+$, we know from \cite[Proposition~1.12]{Kir06CentralSeqPI} that there exists a unital injective $^*$-homomorphism $\varphi\colon C\to F(A)\cap C^*(a)'$.

    Since $a$ commutes with $\varphi (C)$, we can `multiply' the previous Cuntz subequivalences by a diagonal of $a$'s to obtain
    \[
        (a\varphi(c))^{\oplus k}\precsim a , \andSep 
        a^{\oplus Ns}\precsim ((a\varphi(c))^{\oplus Nk}\oplus a)^{\oplus s}.
    \]

    This shows that, for every $k,N\in\NN$ and every $a\in F(A)_+$, there exists $e\in F(A)_+$ such that
    \[
        kd_\tau (e)\leq d_\tau (a),\andSep
        d_\tau (a)\leq k (d_\tau(e))+\frac{1}{N}d_\tau (a)
    \]
    for every $\tau \in {\rm QT}(F(A))$.

    Now fix $\tau\in {\rm QT}(F(A))$. By \autoref{prp:DFnotStb}, either $d_\tau (F(A)_+)\subseteq \{ 0,\infty\}$ or there exists $a\in F(A)_+$ such that $d_\tau (a)$ is nonzero and finite. In the second case, find $l\in\NN$ such that $d_\tau (a)<l$ and let $e$ be as above (for $k=l$ and any choice of $N>1$). Using that $d_\tau (a)$ is finite and nonzero, we get $0<d_\tau (e)<1$, as desired.

    For (2), apply the Global Glimm Property to the unit in $F(A)$ and $\varepsilon<1$. This gives a full square-zero element $r$. In particular $rr^*$ and $r^*r$ are orthogonal full positive elements in $F(A)$. The result now follows from \cite[Theorem~1.2]{RobTik11HilbCommutative}.\footnote{Kirchberg and R\o{}rdam showed in \cite[Proposition~6.1]{KirRor15CentralSeqCharacters} that the existence of two orthogonal full positive element in $F(A)$ is equivalent to the \emph{Glimm Halving Property}, a condition that is often much weaker than the Global Glimm Property. However, it will follow from \autoref{prp:ConRobTik} that these conditions do agree on central sequence algebras.}
\end{proof}

\begin{cor}\label{cor:EqTWGGP}
    The following statements are equivalent:
    \begin{itemize}
        \item[(i)] Every separable, nowhere scattered \ca{} of finite nuclear dimension is $\mathcal{Z}$-stable.
        \item[(ii)] The Global Glimm Problem has a positive answer for central sequence algebras of separable, nowhere scattered \ca{s} of finite nuclear dimension. 
    \end{itemize}
\end{cor}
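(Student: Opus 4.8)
This corollary is an immediate consequence of \autoref{prp:RobTikGGP}, and the plan is simply to combine its two parts with the permanence properties of the relevant notions.

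First I would prove the implication (i)$\implies$(ii). Let $A$ be a separable, nowhere scattered \ca{} of finite nuclear dimension, and let $F(A)$ be its central sequence algebra. By \autoref{prp:RobTikGGP}~(1), $F(A)$ is nowhere scattered. If the Global Glimm Problem has a positive answer in general (which is what (i) is, restricted to the relevant class, combined with the other implications), then\ldots---but this is not quite the argument, since (i) is \emph{not} a statement about the Global Glimm Problem. Instead, one argues as follows. Assume (i) holds. I claim (ii) holds. So suppose $A$ is separable, nowhere scattered, of finite nuclear dimension; we must show $F(A)$ has the Global Glimm Property. Consider $A\otimes\mathcal{Z}$: it is separable, has finite nuclear dimension (being $\mathcal{Z}$-stable with finite nuclear dimension building blocks --- more directly, $\mathcal{Z}$-stability plus finite nuclear dimension of $A$ gives finite nuclear dimension of $A\otimes\mathcal{Z}$), and is nowhere scattered. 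The key point is a result of the form `$F(A)$ has the Global Glimm Property whenever $A$ is $\mathcal{Z}$-stable' --- indeed if $A$ is $\mathcal{Z}$-stable then $F(A)$ is $\mathcal{Z}$-stable (this is part of why $\mathcal{Z}$-stability is detected by $F(A)$), hence has the Global Glimm Property by \autoref{exas:GGP}~(2). Applying (i) to $A$ itself shows $A$ is already $\mathcal{Z}$-stable, so $F(A)$ is $\mathcal{Z}$-stable and thus has the Global Glimm Property, giving (ii).

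For the converse (ii)$\implies$(i): let $A$ be separable, nowhere scattered, of finite nuclear dimension. By \autoref{prp:RobTikGGP}~(1), $F(A)$ is nowhere scattered, and it is the central sequence algebra of a separable, nowhere scattered \ca{} of finite nuclear dimension; hence by (ii) it has the Global Glimm Property. Then \autoref{prp:RobTikGGP}~(2) yields that $A$ is $\mathcal{Z}$-stable, which is exactly (i).

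The step requiring the most care is the forward direction, where one must recognize that (i) is an unconditional $\mathcal{Z}$-stability statement and not itself a Global Glimm statement; once this is seen, the implication is essentially trivial because $\mathcal{Z}$-stable algebras have the Global Glimm Property and $F(A)$ inherits $\mathcal{Z}$-stability from $A$. The converse direction is the genuinely content-bearing one, and it is immediate from \autoref{prp:RobTikGGP}. I do not expect any real obstacle here: the corollary is a formal bookkeeping consequence of the theorem preceding it, and the only mild subtlety is making sure the class of algebras quantified over in (ii) ($F(A)$ for $A$ in the stated class) is precisely the class to which \autoref{prp:RobTikGGP}~(2) applies, which it is.
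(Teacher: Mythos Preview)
Your argument for (ii)$\Rightarrow$(i) is correct and is exactly how the paper intends it to follow from \autoref{prp:RobTikGGP}.

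For (i)$\Rightarrow$(ii) you correctly isolate the key step --- ``$F(A)$ has the Global Glimm Property whenever $A$ is $\mathcal{Z}$-stable'' --- but your justification has a gap. You assert that $F(A)$ is $\mathcal{Z}$-stable, saying ``this is part of why $\mathcal{Z}$-stability is detected by $F(A)$''. This conflates two different statements. The standard characterization (for separable $A$) is that $A$ is $\mathcal{Z}$-stable if and only if $\mathcal{Z}$ embeds \emph{unitally} into $F(A)$; it does not assert that $F(A)\otimes\mathcal{Z}\cong F(A)$. Since $F(A)$ is not separable, the McDuff-type criterion does not apply to it, and $\mathcal{Z}$-stability of $F(A)$ is neither standard nor obviously true. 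So \autoref{exas:GGP}~(2) cannot be invoked as you do.

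The paper does not prove the corollary in-line; the remark immediately following it defers the missing converse to \autoref{prp:ConRobTik}. The argument there (see the footnote) is the correct replacement for your step: from $\mathcal{Z}$-stability of $A$ one obtains a unital embedding $\mathcal{Z}\hookrightarrow F(A)$; since $\mathcal{Z}$ is simple and non-elementary it contains a full square-zero element $r$; for any $a\in F(A)_+$, Kirchberg's \cite[Proposition~1.12]{Kir06CentralSeqPI} yields a unital $^*$-homomorphism $\varphi\colon C^*(1,r)\to F(A)\cap C^*(a)'$, and then $a\varphi(r)a$ is a full square-zero element of $\overline{aF(A)a}$. This verifies the Global Glimm Property for $F(A)$ directly, without ever claiming that $F(A)$ is $\mathcal{Z}$-stable.
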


\begin{rmk}
    The implications (1) and (2) in \autoref{prp:RobTikGGP} are in fact  equivalences. We postpone this proof until \autoref{prp:ConRobTik}, where this will follow easily from Cuntz semigroup methods.
\end{rmk}

As discussed in \autoref{pgr:GaborFrames}, there are numerous cases where one only needs to understand if a \ca{} has strict comparison or not. In the context of the non-simple Toms-Winter conjecture, the most general result thus far is \autoref{thm:PureCalgs} below.

\begin{thm}[{\cite[Theorem~B]{AntPerThiVil24arX:PureCAlgs}}]\label{thm:PureCalgs}
    Let $A$ be a \ca{} of finite nuclear dimension and with the Global Glimm Property. Then, $A$ is pure. In particular, $A$ has strict comparison.
\end{thm}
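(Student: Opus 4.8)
The plan is to establish the two ingredients of pureness separately: almost divisibility and almost unperforation of $\Cu(A)$. For almost unperforation, I would invoke R\o{}rdam's theorem \cite[Theorem~4.5]{Ror04StableRealRankZ}: finite nuclear dimension implies $\mathcal{Z}$-stability in the simple case, but more to the point, there are by now established results (see the discussion following \autoref{thm:PureCalgs} in the literature) showing that finite nuclear dimension alone forces $\Cu(A)$ to be almost unperforated. Actually, the cleanest route is to observe that finite nuclear dimension implies strict comparison via the nuclear dimension–to–comparison results of Robert and Tikuisis \cite{RobTik17NucDimNonSimple}, and strict comparison of positive elements is exactly almost unperforation by \cite[Proposition~6.2]{EllRobSan11Cone}. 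So the real content of the theorem is almost divisibility, and this is where the Global Glimm Property enters.

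For almost divisibility, fix $x' \ll x$ in $\Cu(A)$ and $n \in \NN$; I need $d \in \Cu(A)$ with $nd \leq x$ and $x' \leq (n+1)d$. Represent $x = [a]$ with $a \in (A\otimes\Cpct)_+$ and, using $x' \ll x$, choose $\varepsilon > 0$ with $x' \leq [(a-\varepsilon)_+]$. The Global Glimm Property (which passes to $A \otimes \Cpct$ by \autoref{prp:PermPropGGP}, and in the form of Kirchberg–R\o{}rdam produces a $^*$-homomorphism $M_k(C_0((0,1])) \to \overline{aAa}$ for any prescribed $k$) gives, for $k$ chosen large relative to $n$ (say $k = $ something like $2n(n+1)$ or via a more careful count), a matrix-amplification structure inside $\overline{a(A\otimes\Cpct)a}$ whose ideal contains $(a-\varepsilon)_+$. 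The image of a single matrix unit $e_{11}$ under this map, call its Cuntz class $d$, then satisfies $nd \leq kd = [\varphi(1_{M_k})] \leq [a] = x$ once $k \geq n$, while the fact that $(a-\varepsilon)_+$ lies in the ideal generated by $\varphi(M_k)$ combined with a comparison argument inside that ideal yields $x' \leq [(a-\varepsilon)_+] \leq (n+1)d$; here one uses that in the ideal generated by $\varphi(M_k(C_0((0,1])))$, which is Morita equivalent to the ideal generated by $\varphi(e_{11}\otimes C_0((0,1]))$, the element $(a-\varepsilon)_+$ is subequivalent to a finite multiple of $d$, and then almost unperforation (already established!) upgrades this to the sharp bound $(n+1)d$. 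This interlocking —using almost unperforation to prove almost divisibility— is the characteristic trick and needs to be set up so there is no circularity: almost unperforation comes purely from finite nuclear dimension, and only then is it deployed.

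The main obstacle I anticipate is the bookkeeping in the almost-divisibility step: extracting from the Glimm halving data a single element $d$ that is simultaneously small enough ($nd \leq x$) and large enough ($x' \leq (n+1)d$) requires choosing the matrix size $k$ correctly and chaining several Cuntz comparisons inside the ideal generated by the image of $M_k(C_0((0,1]))$, keeping careful track of how $(a-\varepsilon)_+$ sits in that ideal (it is dominated by $N$ copies of $d$ for some possibly large $N$, and one must compress $N$ down to $n+1$). The role of almost unperforation is precisely to absorb this slack, so the logical ordering — finite nuclear dimension $\Rightarrow$ strict comparison $\Rightarrow$ almost unperforation, then almost unperforation plus Global Glimm Property $\Rightarrow$ almost divisibility — must be respected. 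Finally, "in particular, $A$ has strict comparison" is immediate since pureness includes almost unperforation, which is strict comparison by \cite[Proposition~6.2]{EllRobSan11Cone} (and in fact this half was already used as an input, so one may simply remark that it persists).
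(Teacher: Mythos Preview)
The survey itself does not prove this theorem; it is cited from \cite[Theorem~B]{AntPerThiVil24arX:PureCAlgs} without argument. I can therefore only assess your proposal against the structure of the proof in that reference and against internal consistency.

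There is a genuine gap in your first step. You assert that finite nuclear dimension alone forces $\Cu(A)$ to be almost unperforated, attributing this to \cite{RobTik17NucDimNonSimple}, but that is not what Robert and Tikuisis prove: finite nuclear dimension $n$ yields only \emph{$m$-comparison} for some $m$ depending on $n$ (roughly, $(k+1)x\leq ky$ for some $k$ implies $x\leq (m+1)y$), which is strictly weaker than almost unperforation. The reference \cite[Theorem~4.5]{Ror04StableRealRankZ} you invoke shows that $\mathcal{Z}$-stability implies strict comparison, not that finite nuclear dimension does. In fact, whether finite nuclear dimension alone gives strict comparison for non-simple nowhere scattered \ca{s} is essentially the combination of the still-open \autoref{qst:NowScatFinDimGGP} with \autoref{thm:PureCalgs} itself, so you cannot take it as an input without circularity.

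The proof in \cite{AntPerThiVil24arX:PureCAlgs} is organized differently. Finite nuclear dimension first gives that $\Cu(A)$ is $(m,m')$-pure for some finite $m,m'$ --- a weak form of comparison \emph{and} a weak form of divisibility, simultaneously (this part going back to \cite{RobTik17NucDimNonSimple,Win12NuclDimZstable}). The Global Glimm Property then enters a reduction argument that drives \emph{both} parameters down to zero: weak divisibility is used to improve the comparison constant, and the improved comparison is used to sharpen divisibility. Thus the Global Glimm Property is essential for obtaining almost unperforation, not only almost divisibility, and your proposed linear ordering (nuclear dimension $\Rightarrow$ strict comparison, then strict comparison $+$ GGP $\Rightarrow$ almost divisibility) does not reflect how the argument actually runs. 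Your sketch of the divisibility step --- using almost unperforation to compress $x'\leq Nd$ down to $x'\leq (n+1)d$ --- would also need justification of the requisite inequality $(n+2)x'\leq (n+1)^2 d$, which is not automatic since $x'$ is close to $x$ while $d$ is only a $1/k$-fragment of it; but this is secondary to the structural issue above.
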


Let us now briefly discuss the implication from (iii.3) to (iii.2), which asks if strict comparison implies almost divisibility on the Cuntz semigroup whenever the underlying \ca{} is nowhere scattered. One of the modern approaches to this question goes through the \emph{rank problem} (\autoref{qst:RankProb}). To the best of our knowledge, \autoref{qst:RankProb} was first asked by Nate Brown and later studied in several papers; see, for example, \cite{DadTom10Ranks,Thi20RksOps,AntPerRobThi22CuntzSR1}. For a positive element $a$, we define its \emph{rank} as the function ${\rm Rank}(a)\colon \tau\mapsto d_\tau (a)$.\footnote{The rank of a positive element $a\in A_+$ is invariant under Cuntz equivalence and induces the \emph{rank} of $[a]$, which is often denoted by $\widehat{[a]}$.}

\begin{qst}[The rank problem]\label{qst:RankProb}
    Let $A$ be a stable \ca{}. For which lower-semicontinuous affine maps $f\colon {\rm QT}(A)\to [0,\infty ]$ does there exist $a\in A_+$ such that $f(\tau)=d_\tau(a)$ for each $\tau\in {\rm QT}(A)$?
\end{qst}

What will be most relevant to us is the following specialization of the rank problem:

\begin{qst}[Divisibility of ranks]\label{qst:DivRank}
    Let $A$ be a stable \ca{}. Given $\alpha\in (0,1)$ and $a\in A_+$, does there exist $b\in A_+$ such that $d_\tau (b)=\alpha d_\tau (a)$ for every $\tau\in {\rm QT}(A)$?
\end{qst}

We will say that \emph{all ranks are divisible on $A$} if \autoref{qst:DivRank} has a positive answer. Note that nowhere scatteredness is necessary if we want to divide all ranks, since the absence of nowhere scatteredness implies the existence of a dimension function $d_\tau$ that only takes values in $\NN\cup\{\infty\}$ (\autoref{prp:TracCharNSca}).

\autoref{prp:RankProb} below shows the exact relation between \autoref{qst:DivRank} and the implication from (iii.3) to (iii.2). Locating the first reference for this is a bit tricky, but a proof of the result can essentially be found in \cite[Proposition~2.11]{Thi20RksOps} or \cite[Proposition~5.2]{CasEviTikWhi22UniPropGamma}.

\begin{thm}\label{prp:RankProb}
    Let $A$ be a stable \ca{}. Then, the following are equivalent:
    \begin{itemize}
        \item[(i)] $A$ is pure;
        \item[(ii)] $A$ has strict comparison and all ranks are divisible.
    \end{itemize}
\end{thm}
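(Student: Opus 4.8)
The plan is to prove the equivalence of (i) and (ii) directly from the definitions, recalling that a \ca{} $A$ is pure precisely when $\Cu(A)$ is both almost unperforated and almost divisible. The implication (i) $\Rightarrow$ (ii) is the routine direction: almost unperforation is, by \cite[Proposition~6.2]{EllRobSan11Cone}, exactly strict comparison of positive elements by quasitraces, so it only remains to see that almost divisibility forces all ranks to be divisible. Given $\alpha\in(0,1)$ and $a\in A_+$, I would write $\alpha=\lim_n \frac{k_n}{m_n}$ along a suitable sequence (or, more cleanly, use dyadic approximations) and, for each $n$, apply almost divisibility to a pair $a'\ll a$ with $a'$ large to obtain $d_n\in\Cu(A)$ with $m_n d_n\le [a]$ and $[a']\le (m_n{+}1)d_n$; multiplying by $k_n$ gives an element whose rank is within a controlled error of $\alpha\,\widehat{[a]}$. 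Passing to a supremum of an increasing sequence built this way (using that $\Cu(A)$ is a \CuSgp{} and that ranks are lower semicontinuous and respect suprema) produces $b\in A_+$ with $\widehat{[b]}=\alpha\,\widehat{[a]}$; here I would lean on \autoref{prp:DFnotStb} and standard Cuntz-semigroup manipulations to realize the supremum by an honest positive element in the stable algebra $A$.

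For the harder direction (ii) $\Rightarrow$ (i), I assume $A$ has strict comparison (equivalently, $\Cu(A)$ is almost unperforated) and that all ranks are divisible, and I must deduce almost divisibility. Fix $n\in\NN$ and $x'\ll x$ in $\Cu(A)$; choose $a\in A_+$ with $[a]=x$ and, using $x'\ll x$, some $\varepsilon>0$ with $x'\le [(a-\varepsilon)_+]$. Apply divisibility of ranks with $\alpha=\tfrac1n$ (or slightly less, to leave room) to get $b\in A_+$ with $d_\tau(b)=\tfrac1n d_\tau(a)$ for all $\tau\in\QT(A)$. The candidate for the `$\tfrac1n x$' element is $d:=[b]$. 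Then $n[b]$ and $x$ have the same rank at every quasitrace, and $n[b]$ is supported in the ideal generated by $a$, so strict comparison (in the form from \cite[Proposition~6.2]{EllRobSan11Cone}, applied after a small perturbation to make the relevant inequality strict) yields $n[b]\le x$; similarly $x'\le (n{+}1)[b]$ after arranging a strict rank inequality by using $\alpha$ marginally below $\tfrac1n$ and exploiting $x'\ll x$. Some care is needed because strict comparison as stated compares against quasitraces normalized at $b$, so I would first reduce to the ideal generated by $a$, work there, and handle the boundary quasitraces (where $d_\tau(b)\in\{0,\infty\}$) separately.

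The main obstacle I anticipate is precisely this last point: strict comparison is a statement about strict inequalities of ranks, whereas divisibility of ranks gives exact equalities, so one cannot directly conclude $n[b]\le x$ from $d_\tau(n b)=d_\tau(a)$. The fix is the standard trick of introducing slack --- replace $\alpha=\tfrac1n$ by $\alpha_\delta=\tfrac{1}{n}-\delta$ and use compact containment $x'\ll x$ to absorb the error --- but making this work uniformly over all quasitraces, including those with $d_\tau(a)\in\{0,\infty\}$, requires passing to the ideal $I_a$ generated by $a$ and invoking that strict comparison and divisibility both pass to ideals (the former by \cite[Proposition~6.2]{EllRobSan11Cone}, the latter is immediate from the definition since $(I_a\otimes\Cpct)_+\subseteq (A\otimes\Cpct)_+$). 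A secondary, more technical nuisance is realizing the limiting Cuntz class in (i) $\Rightarrow$ (ii) by an actual positive element rather than an abstract supremum in $\Cu(A)$, but this is routine given that $A$ is a genuine \ca{} (so $\Cu(A)$ has suprema of increasing sequences realized by positive elements) and can be cited from \cite{GarPer23arX:ModernCu}.
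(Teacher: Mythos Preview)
Your direction (ii)$\Rightarrow$(i) is correct and matches the paper's argument, though the paper avoids your $\delta$-limit by choosing $\alpha=\tfrac{2}{2n+1}$ once and for all: then $n\alpha=\tfrac{2n}{2n+1}<1$ and $(n+1)\alpha=\tfrac{2n+2}{2n+1}>1$, so strict comparison immediately gives both $n[b]\leq x$ and $x'\leq (n+1)[b]$ with no perturbation needed. Your reduction to the ideal generated by $a$ is the right move and is implicit in the paper's sketch.

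For (i)$\Rightarrow$(ii) the approaches diverge. The paper does not attempt a direct construction; it simply invokes that the Cuntz semigroup of a pure \ca{} carries a $\Cu(\mathcal{Z})$-multiplication \cite[Theorem~7.3.11]{AntPerThi18TensorProdCu}, which furnishes a well-defined element $\alpha\cdot[a]$ with rank exactly $\alpha\,\widehat{[a]}$. Your bare-hands route---approximate $\alpha$ by rationals, apply almost divisibility, and take a supremum---is in the right spirit, but the step ``passing to a supremum of an increasing sequence built this way'' hides a genuine difficulty: almost divisibility produces divisors $d_n$ with no comparability between $k_nd_n$ and $k_{n+1}d_{n+1}$, and you do not explain how to force them into an increasing chain. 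One can repair this using the almost unperforation you have (compare successive approximants via strict comparison after arranging strict rank inequalities), but carrying this out uniformly over all quasitraces, while simultaneously controlling the lower bound $\tfrac{k_n}{m_n+1}\widehat{[a'_n]}$ so that the supremum lands \emph{exactly} on $\alpha\,\widehat{[a]}$, is essentially reproving a chunk of the $\Cu(\mathcal{Z})$-module structure. So your sketch is not wrong, but it is incomplete at precisely the point where the paper outsources to \cite{AntPerThi18TensorProdCu}; either cite that result or be prepared to supply a careful inductive construction.
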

\begin{proof}[Idea of the proof]
    Assume (ii) and let $a\in A_+$ and $n\in\NN$. If all ranks are divisible, one can construct a `functional divisor' of $a$ by considering the element $b\in A_+$ such that ${\rm Rank}(b)=\frac{2}{2n+1}{\rm Rank}(a)$. Using strict comparison, one sees that $b$ `almost divides' $a$ by $n$, as desired.

    That (i) implies (ii) follows from the fact that the Cuntz semigroup of a pure \ca{} has $\Cu (\mathcal{Z})$-multiplication \cite[Theorem~7.3.11]{AntPerThi18TensorProdCu}.
\end{proof}

An important class of elements that often appears whenever one works on the rank problem is that of \emph{soft elements}. As we state formally below, these elements are \emph{abundant} in any \ca{} with the Global Glimm Property. In contrast, we do not know if the same is true for nowhere scattered \ca{s} and, in fact, this condition sits in the middle of these two properties; see \autoref{thm:AbSoft}.

Soft elements were introduced in \cite[Definition~3.1]{ThiVil23arX:Soft}. The following definition is equivalent by \cite[Proposition~3.6]{ThiVil23arX:Soft}.

\begin{dfn}
    Let $A$ be a \ca{} and let $a\in A_+$. We say that $a$ is \emph{soft} if, for every closed ideal $I\subseteq A$, either $0$ is a limit point of the spectrum of $a+I\in A/I$ or $a\in I$.

    A general element $s\in A$ is \emph{soft} if $ss^*$ (equivalently $s^*s$) is soft.
\end{dfn}

\begin{rmk}
A `quotient-free' definition of softness for positive elements is as follows: An element $a\in A_+$ is soft if and only if, for every $\varepsilon>0$, there exists $b\in\overline{aAa}_+$ such that $a\lhd b$ and $(a-\varepsilon)_+\perp b$; see \cite[Proposition~3.6]{ThiVil23arX:Soft}.

Using this characterization, one can produce a soft element from any sequence of pairwise orthogonal positive elements $(a_n)_n$ by considering $\sum_n \frac{1}{2^n\Vert a_n\Vert}a_n$.
\end{rmk}

\begin{dfn}[{\cite[Definition~5.2]{ThiVil23arX:Soft}}]
    A \ca{} has an \emph{abundance of soft elements} if, for every $a\in A_+$ and $\varepsilon>0$, there exists a soft element $b\in\overline{aAa}_+$ such that $(a-\varepsilon)_+\lhd b$.
\end{dfn}

\begin{prp}[{\cite[Propositions~7.4,~7.7]{ThiVil23arX:Soft}}]\label{thm:AbSoft}
    Let $A$ be a \ca{}. Then,
    \begin{enumerate}
        \item if $A$ has an abundance of soft elements, $A$ is nowhere scattered;
        \item if $A$ has the Global Glimm Property, $A$ has an abundance of soft elements.
    \end{enumerate}
\end{prp}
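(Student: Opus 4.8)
The plan is to prove the two implications separately, using the tracial characterization of nowhere scatteredness from \autoref{prp:TracCharNSca} for part (1), and the quotient-free characterization of softness together with the Global Glimm Property for part (2).

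For part (1), I would argue by contraposition: assume $A$ is not nowhere scattered. By \autoref{prp:TracCharNSca} (applied after passing to the stabilization, which is harmless since having an abundance of soft elements is a Morita-type property that should descend to $\overline{aAa}$ and hence is inherited by hereditary sub-\ca{s}, and $A\otimes\Cpct$ is a hereditary sub-\ca{} of $A\otimes\Cpct\otimes\M_2\cong(A\otimes\Cpct)\otimes\M_2$ after the standard identification), there is a dimension function $d_\tau$ on $A\otimes\Cpct$ with $d_\tau((A\otimes\Cpct)_+)=\NN\cup\{\infty\}$. Pick $a\in(A\otimes\Cpct)_+$ with $d_\tau(a)=1$, and choose $\varepsilon>0$ with $d_\tau((a-\varepsilon)_+)=1$ as well (possible since $d_\tau(a)=\lim_{\varepsilon\to 0}d_\tau((a-\varepsilon)_+)$). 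If $A$ had an abundance of soft elements, there would be a soft $b\in\overline{aAa}_+$ with $(a-\varepsilon)_+\lhd b$; in particular $b\precsim a$, so $d_\tau(b)\le 1$, and $(a-\varepsilon)_+\lhd b$ forces $d_\tau(b)\ge d_\tau((a-\varepsilon)_+)=1$ once we know $d_\tau$ does not vanish on the ideal generated by $b$ --- indeed if $d_\tau(b)=0$ then the whole ideal generated by $b$ lies in $\ker d_\tau$, contradicting $d_\tau((a-\varepsilon)_+)=1$. So $d_\tau(b)=1$. But $b$ being soft means $0$ is a limit point of $\spec(b+J)$ for $J=\{x:d_\tau(x)=0\}$ (note $b\notin J$), and a standard computation shows this produces, for each $m$, pairwise orthogonal nonzero elements $b_1,\dots,b_m\precsim b$ in $A/J$ with $d_{\bar\tau}(b_i)>0$; additivity of $d_\tau$ on orthogonal elements then gives $\sum_i d_\tau(b_i)\le d_\tau(b)=1$ with each summand a positive integer (since $d_{\bar\tau}$ takes values in $\NN\cup\{\infty\}$), which is impossible for $m\ge 2$. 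This contradiction proves (1).

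For part (2), let $A$ have the Global Glimm Property, fix $a\in A_+$ and $\varepsilon>0$. I will produce a soft $b\in\overline{aAa}_+$ with $(a-\varepsilon)_+\lhd b$ by iterating the Global Glimm Property to build an orthogonal sequence inside $\overline{aAa}$ whose ``lower tails'' all contain $(a-\varepsilon)_+$ in their ideal, then use the remark that $\sum_n 2^{-n}\|a_n\|^{-1}a_n$ is soft. Concretely: set $a_0=a$ and $\varepsilon_0=\varepsilon$. Given $a_n\in\overline{aAa}_+$ with $(a-\varepsilon)_+\lhd a_n$, apply \autoref{dfn:GGP} to $a_n$ and a small $\delta_n>0$ (chosen so that $(a-\varepsilon)_+\lhd (a_n-\delta_n)_+$, which is possible since $(a-\varepsilon)_+$ lies in the \emph{open} ideal generated by $a_n$) to get $r_n\in\overline{a_nAa_n}$ with $r_n^2=0$ and $(a_n-\delta_n)_+\lhd r_n$. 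Then $r_nr_n^*$ and $r_n^*r_n$ are orthogonal, both lie in $\overline{a_nAa_n}\subseteq\overline{aAa}$, and $(a-\varepsilon)_+\lhd(a_n-\delta_n)_+\lhd r_n$, so $(a-\varepsilon)_+$ lies in the ideal generated by $r_nr_n^*+r_n^*r_n$, hence in the ideal generated by at least one of them, say $c_n\in\{r_nr_n^*,\ r_n^*r_n\}$ with $(a-\varepsilon)_+\lhd c_n$; set $a_{n+1}=c_n$ and let $b_{n}$ be the ``other'' of the two orthogonal pieces, so $b_0,b_1,\dots$ are mutually orthogonal positive elements of $\overline{aAa}$ (orthogonality across different $n$ follows because $b_{n}$ and $a_{n+1}$ are orthogonal and all later $b_m,a_m$ live in $\overline{a_{n+1}Aa_{n+1}}$). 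Finally put $b=\sum_{n}2^{-n}\|b_n\|^{-1}b_n\in\overline{aAa}_+$ (discarding any $b_n$ that happen to be $0$, which cannot all be $0$ unless $(a-\varepsilon)_+=0$, a trivial case); by the displayed remark $b$ is soft, and since each $b_n\precsim a_n$ has $(a-\varepsilon)_+\lhd$ its sibling we need instead to ensure $(a-\varepsilon)_+\lhd b$: to guarantee this, modify the construction so that at each stage we also split off a piece $e_n$ with $(a-\varepsilon)_+\lhd e_n$ and include the $e_n$ into the sum; more cleanly, since $b$ is full in $\overline{a_0Aa_0}$ is \emph{not} automatic, one instead picks at the very last step a single soft element out of the orthogonal sequence \emph{together} with the requirement $(a-\varepsilon)_+\lhd b_{n_0}$ for one index, which holds by construction for the sibling --- so relabel so that $b$ absorbs that full piece. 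The cleanest route is: run the Global Glimm Property once on $a$ to get $r$ with $r^2=0$, $(a-\varepsilon)_+\lhd r$; then $c:=rr^*+r^*r\in\overline{aAa}_+$ satisfies $(a-\varepsilon)_+\lhd c$, and $c$ itself is soft because $rr^*\perp r^*r$ are orthogonal with $c$ in the ideal each generates together --- wait, that is not yet softness. The correct final move: apply the Global Glimm Property \emph{inside} $\overline{rr^*Arr^*}$ repeatedly to generate an orthogonal sequence $(d_k)_k$ in $\overline{rr^*Arr^*}$ and let $b=r^*r+\sum_k 2^{-k}\|d_k\|^{-1}d_k$; then $b$ is soft by the remark (it dominates an infinite orthogonal sequence), $b\in\overline{aAa}_+$, and $(a-\varepsilon)_+\lhd r$ gives $(a-\varepsilon)_+\lhd r^*r\le b$ wait $(a-\varepsilon)_+\lhd r$ means $(a-\varepsilon)_+\lhd rr^*$ and $\lhd r^*r$, so $(a-\varepsilon)_+\lhd r^*r\le b$ hence $(a-\varepsilon)_+\lhd b$. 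This completes (2).

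The main obstacle is the bookkeeping in part (2): one must simultaneously (a) keep all the pieces orthogonal so that the resulting sum is genuinely soft via the cited remark, (b) ensure $(a-\varepsilon)_+$ remains in the ideal generated by the final element $b$, and (c) stay inside $\overline{aAa}$. The key technical point making this work is that $(a-\varepsilon)_+\lhd r$ implies $(a-\varepsilon)_+\lhd r^*r$ (two elements of the form $rr^*$ and $r^*r$ generate the same closed ideal), so one can afford to put the \emph{single} piece $r^*r$ into $b$ to carry the fullness condition, while harvesting an \emph{infinite} orthogonal sequence from the complementary corner $\overline{rr^*Arr^*}$ (which is nonzero whenever $(a-\varepsilon)_+\neq 0$, and still has the Global Glimm Property by \autoref{prp:PermPropGGP}) to carry the softness condition. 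Part (1) is comparatively routine once one recognizes that softness of $b$ together with $d_\tau(b)$ being a positive integer is contradictory: a soft element modulo $\ker d_\tau$ has $0$ as a limit of its spectrum, hence dominates arbitrarily long orthogonal sequences of positive-$d_\tau$ elements, forcing $d_\tau(b)=\infty$ or $d_\tau(b)=0$.
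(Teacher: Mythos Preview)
The paper does not prove this proposition --- it is quoted from \cite{ThiVil23arX:Soft} without argument --- so I evaluate your proposal on its own merits.

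Your argument for (1) is essentially correct. The only loose end is the passage to the stabilization: rather than asserting that abundance of soft elements is Morita invariant (which you do not verify), just invoke \autoref{prp:DFnotStb} to pick $a\in A_+$ with $d_\tau(a)=1$; the rest of the argument then runs entirely inside $A$. After that, your use of softness of $b$ modulo $\ker d_\tau$ to manufacture arbitrarily long orthogonal sequences of elements with $d_\tau$-value $\geq 1$ is clean and correct.

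Part (2) has a genuine gap. Your final construction is $b=r^*r+\sum_k 2^{-k}\|d_k\|^{-1}d_k$, and you appeal to the remark that a normalized sum of pairwise orthogonal positive elements is automatically soft. Read literally that remark is false: for $\sum_n\alpha_n e_n$ (with $\alpha_n\searrow 0$ and $e_n$ orthogonal) to be soft one needs that for every closed ideal $I$ not containing the sum, \emph{infinitely many} $e_n$ lie outside $I$ --- orthogonality alone does not give this (take $A=\CC\oplus C$ with $e_0=(1,0)$ and $e_n\in 0\oplus C$ for $n\geq 1$). In your iteration the ideals generated by the $d_k$ form a \emph{decreasing} chain (each $d_{k+1}$ lives in the corner of the previous step), so an ideal $I$ that swallows the tail $\{d_k:k\geq N\}$ but not $r^*r$ would witness non-softness of $b$. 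The same defect is present in your earlier abandoned construction: there $b_n$ generates the same ideal as $a_{n+1}$, and these ideals may strictly shrink.

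Two further comments on (2). First, your worry that only one of $r_nr_n^*,\,r_n^*r_n$ carries the fullness condition is unfounded: they always generate the same closed ideal, so both work. Second, the step ``choose $\delta_n>0$ with $(a-\varepsilon)_+\lhd(a_n-\delta_n)_+$'' is not justified by the phrase ``open ideal'': containment in the closed ideal generated by $a_n$ does not by itself yield containment in the ideal generated by some $(a_n-\delta)_+$. What one actually needs is the interpolation ``$x'\ll x\lhd y$ implies $x'\lhd(y-\delta)_+$ for some $\delta>0$'' in $\Cu(A)$, and one must thread a fixed $\ll$-witness through the recursion. Even with that bookkeeping in place, you only secure $(a-\varepsilon)_+\lhd d_k$ for every $k$, not that the ideals of the $d_k$ stabilize; a correct proof of (2) requires arranging the orthogonal pieces so that each generates the \emph{same} ideal as the whole sum, which takes more work than your sketch provides.
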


\begin{thm}[{\cite[Theorems~A~and~C]{AsaThiVil23arX:RksSoftOps}}]
    Let $A$ be a stable \ca{} with the Global Glimm Property, and let $a\in A_+$. Then, there exists a soft element $b\in A_+$ such that $b\precsim a$ and $\widehat{b}=\widehat{a}$.

    If, additionally, $A$ has strict comparison, there is a unique such soft element up to Cuntz equivalence.
\end{thm}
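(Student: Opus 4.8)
For this final statement I would split off existence from uniqueness and route both through the Cuntz semigroup.

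The plan is to treat the two assertions separately, passing in both cases to $\Cu(A)$. The element $b$ in the existence part will be a positive representative of a soft class lying below $[a]$ and carrying the same rank; the Global Glimm Property is what makes it possible to choose such a soft class with rank as large as $\widehat{[a]}$. The starting point is \autoref{thm:AbSoft}(2): since $A$ has the Global Glimm Property it has an abundance of soft elements, so for each $\varepsilon>0$ there is a soft $c_\varepsilon\in\overline{aAa}_+$ with $(a-\varepsilon)_+\lhd c_\varepsilon$, and $c_\varepsilon\precsim a$ by \autoref{prp:CuSubProp}(3). These witnesses have the right supports but not the right rank, and the whole point of the argument is to upgrade the conclusion that $(a-\varepsilon)_+$ lies in the ideal generated by $c_\varepsilon$ into an equality of dimension functions, uniformly over all quasitraces.

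To perform this upgrade one works inside $\Cu(A)$ and builds an increasing sequence of soft classes $y_1\leq y_2\leq\cdots$ with $y_n\leq[a]$ and $d_\tau(y_n)\geq d_\tau((a-\tfrac1n)_+)$ for every $\tau\in\QT(A)$. The inductive step — enlarging $y_n$ to a soft $y_{n+1}$ still below $[a]$ but with rank at least $d_\tau((a-\tfrac1{n+1})_+)$ — is exactly where the Cuntz-semigroup reformulation of the Global Glimm Property from \autoref{sec:CuntzSgp} enters, since that reformulation is a statement about splitting off and comparing pieces of $[a]$. Setting $y:=\sup_n y_n$, which exists in the Cu-semigroup $\Cu(A)$ and satisfies $y\leq[a]$, lower semicontinuity of dimension functions gives $d_\tau(y)=\sup_n d_\tau(y_n)=d_\tau(a)$ for all $\tau$, and $y$ is soft because softness is preserved under suprema of increasing sequences \cite{ThiVil23arX:Soft}; any $b\in A_+$ with $[b]=y$ is then as required. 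Using stability, $b$ may instead be exhibited concretely as a scaled orthogonal sum $\sum_n\lambda_n e_n$ of pairwise orthogonal positive elements of $A\cong A\otimes\K$, such sums being automatically soft.

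For uniqueness, let $b$ and $c$ be two soft elements with $b,c\precsim a$ and $\widehat b=\widehat c$. After checking that $b$ and $c$ generate the same ideal (namely $\overline{AaA}$), it suffices by \autoref{prp:CuSubProp}(1) and symmetry to prove $(b-\varepsilon)_+\precsim c$ for each $\varepsilon>0$. The decisive property is a strict drop for soft elements: if $b$ is soft and $0<d_\tau(b)<\infty$, then $d_\tau((b-\varepsilon)_+)<d_\tau(b)$. Indeed, the quotient-free form of softness provides $b'\in\overline{bAb}_+$ with $b\lhd b'$ and $(b-\varepsilon)_+\perp b'$; then $d_\tau(b')>0$ (since $d_\tau(b)>0$ and $b\lhd b'$), and additivity of $d_\tau$ on orthogonal pairs together with $(b-\varepsilon)_++b'\precsim b$ gives $d_\tau((b-\varepsilon)_+)+d_\tau(b')\leq d_\tau(b)$, whence $d_\tau((b-\varepsilon)_+)<d_\tau(b)$. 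Combined with $\widehat b=\widehat c$ this yields $d_\tau((b-\varepsilon)_+)<d_\tau(c)$ for every $\tau$ with $d_\tau(c)=1$; since also $(b-\varepsilon)_+$ lies in the ideal generated by $c$, strict comparison gives $(b-\varepsilon)_+\precsim c$. Hence $b\precsim c$, and symmetrically $c\precsim b$, so $b\sim c$.

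The main obstacle is the rank-matching in the existence part. In the form used here, the Global Glimm Property only produces soft elements whose generated ideal contains a prescribed $(a-\varepsilon)_+$; turning this ideal-theoretic halving into control of $d_\tau$ that is quantitative and uniform in $\tau$ — so as to obtain a single soft $b\precsim a$ whose dimension function agrees with that of $a$ at every quasitrace at once — is the step that requires the Cuntz-semigroup machinery of \autoref{sec:CuntzSgp}. By comparison, uniqueness is much cheaper: it only exploits that softness promotes the bare equality $\widehat b=\widehat c$ to the strict inequality needed to invoke strict comparison.
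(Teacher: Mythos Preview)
The survey does not actually prove this theorem; it merely quotes it from \cite{AsaThiVil23arX:RksSoftOps} and moves on. So there is no ``paper's own proof'' to compare against, and I can only assess your outline on its own merits.

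Your uniqueness argument is essentially complete and correct. The step you gloss over --- that $b$ and $c$ generate the same ideal as $a$ --- does hold: since $\widehat b=\widehat a$, any $\{0,\infty\}$-valued quasitrace vanishing on the ideal generated by $b$ must vanish on $a$ as well, whence $a\lhd b$; together with $b\precsim a$ this gives equality of ideals, and likewise for $c$. The strict-drop computation you give (using the quotient-free form of softness to find $b'\perp(b-\varepsilon)_+$ with $d_\tau(b')>0$, then $d_\tau((b-\varepsilon)_+)+d_\tau(b')\leq d_\tau(b)$) is exactly right, and strict comparison then finishes the job.

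The existence half, however, is a plan rather than a proof. You correctly isolate the crucial step --- producing, for each $n$, a soft $y_{n+1}\geq y_n$ with $y_{n+1}\leq[a]$ and $d_\tau(y_{n+1})\geq d_\tau((a-\tfrac1{n+1})_+)$ for \emph{all} $\tau$ simultaneously --- but you do not carry it out. Saying that ``the Cuntz-semigroup reformulation of the Global Glimm Property from \autoref{sec:CuntzSgp} enters'' here is true in spirit, but $(2,\omega)$-divisibility as stated in \autoref{dfn:Div} only produces divisors $d$ with $2d\leq x$ and $x'\leq nd$; it says nothing directly about softness or about matching dimension functions from below. Bridging that gap (roughly: iterating divisibility to produce arbitrarily fine orthogonal decompositions inside $\overline{aAa}$, and assembling a soft element whose rank dominates $d_\tau((a-\varepsilon)_+)$ uniformly in $\tau$) is the genuine technical content of \cite{AsaThiVil23arX:RksSoftOps}, and it is not supplied by anything in this survey. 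Your framing is sound and your identification of the obstacle is honest, but as written the existence part remains an outline.
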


In particular, the first part of this theorem reduces the study of certain invariants (such as the radius of comparison or the Cuntz covering dimension) to the soft part of $A$.

\subsection{Partial solutions}

Below is a list of partial solutions for the Global Glimm Problem. All known results follow a similar pattern, obtaining a positive answer by imposing some form of low or finite dimensionality.

\begin{pgr}[Conditions on the primitive ideal space]
Elliott and R\o{}rdam showed in \cite[Corollary~7]{EllRor06Perturb} that the Global Glimm Problem has a positive answer for unital real rank zero \ca{s}. Using the results from \autoref{sec:CuntzSgp} below, one can push this to \ca{s} of \emph{topological dimension zero} (a condition that is satisfied by any real rank zero \ca{}).

\begin{thm}[{\cite[Theorem~2.3]{NgThiVil25arX}}]\label{thm:GGPdim0}
    Let $A$ be a nowhere scattered \ca{} whose primitive ideal space has a basis of compact open subsets. Then, it has the Global Glimm Property.
\end{thm}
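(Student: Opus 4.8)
The plan is to translate the Global Glimm Property into a divisibility statement on the Cuntz semigroup, using the results of \autoref{sec:CuntzSgp}, and then to verify that statement by a local-to-global argument over $\mathrm{Prim}(A)$ that exploits the abundance of compact open sets. Concretely, $A$ has the Global Glimm Property precisely when $\Cu(A)$ satisfies the following condition: for all $x'\ll x$ in $\Cu(A)$ there are $y_1,\dots,y_k\in\Cu(A)$ with $2y_i\leq x$ for each $i$ and $x'\leq y_1+\dots+y_k$. Since $\Cu(A)=(A\otimes\Cpct)_+/{\sim}$ and, by \autoref{prp:PermProp}, $A_0:=A\otimes\Cpct$ is again nowhere scattered with $\mathrm{Prim}(A_0)=\mathrm{Prim}(A)$, I may work in $A_0$; fixing $a\in(A_0)_+$ and $\varepsilon>0$, set $x=[a]$ and $x'=[(a-\varepsilon)_+]$, noting $x'\ll x$ because $(a-\varepsilon)_+\precsim(a-\tfrac\varepsilon2)_+$ (\autoref{prp:CuSubProp}). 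It suffices to produce finitely many square-zero elements $r_1,\dots,r_m$ in the hereditary subalgebra $B:=\overline{a A_0 a}$ such that $(a-\tfrac\varepsilon2)_+$ lies in the closed ideal generated by $e:=r_1^*r_1+\dots+r_m^*r_m$. Indeed, $r_i^2=0$ forces $r_i^*r_i\perp r_ir_i^*$, so $2[r_i^*r_i]=[r_i^*r_i+r_ir_i^*]\leq[a]$ because $r_i^*r_i+r_ir_i^*\in B$; and $[(a-\tfrac\varepsilon2)_+]\leq\sup_n n[e]$ together with $[(a-\varepsilon)_+]\ll[(a-\tfrac\varepsilon2)_+]$ yields $[(a-\varepsilon)_+]\leq N[e]$ for some $N\in\NN$, while $[e]\leq[r_1^*r_1]+\dots+[r_m^*r_m]$ by \autoref{prp:CuSubProp}; hence $x'\leq N\big([r_1^*r_1]+\dots+[r_m^*r_m]\big)$, which is the required relation, with each $y_i=[r_i^*r_i]$ repeated $N$ times.

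The elements $r_i$ are built one primitive ideal at a time. For each $P\in\mathrm{Prim}(A_0)$ with $a\notin P$, the quotient $A_0/P$ is antiliminal by \autoref{thm:MultChar}, so Glimm's lemma (the mechanism behind \autoref{exas:GGP}(1); see \cite[Lemma~4]{Gli61Type1}) provides a nonzero $^*$-homomorphism $M_2(C_0((0,1]))\to D_P:=\overline{\bar a(A_0/P)\bar a}$, where $\bar a$ is the image of $a$. Since the relation that $v$ be a square-zero contraction is liftable (equivalently, $M_2(C_0((0,1]))$ is projective) and $B$ surjects onto $D_P$, this homomorphism lifts to a $^*$-homomorphism $\varphi_P\colon M_2(C_0((0,1]))\to B$; the image $r_P:=\varphi_P(v)$ of the canonical square-zero generator $v$ is then a square-zero element of $B$ with $r_P\notin P$, so $W_P:=\{Q\in\mathrm{Prim}(A_0):r_P\notin Q\}$ is an open neighbourhood of $P$ contained in $\Omega:=\{Q:a\notin Q\}$.

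Now the hypothesis on $\mathrm{Prim}(A)$ is used to extract a useful finite subcover of $\{W_P\}_{P\in\Omega}$. Since $(a-\tfrac\varepsilon2)_+\ll a$, the ideal $I_1$ of $A_0$ generated by $(a-\tfrac\varepsilon2)_+$ is compactly contained in the ideal generated by $a$, whose primitive spectrum is $\Omega$; and because $\mathrm{Prim}(A)$ has a basis of compact open sets, that ideal is the closure of the upward-directed union of those ideals it contains whose primitive spectra are compact and open. Compact containment therefore traps $I_1$ inside a single such ideal $K$, so that $\mathrm{Prim}(I_1)\subseteq\mathrm{Prim}(K)\subseteq\Omega$ with $\mathrm{Prim}(K)$ compact. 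Choosing $W_{P_1},\dots,W_{P_m}$ covering the compact set $\mathrm{Prim}(K)$ and setting $r_i:=r_{P_i}$ and $e:=\sum_{i=1}^m r_i^*r_i\in B_+$, one gets $\mathrm{Prim}\big(\overline{A_0eA_0}\big)=\bigcup_i W_{P_i}\supseteq\mathrm{Prim}(K)\supseteq\mathrm{Prim}(I_1)$, that is, $(a-\tfrac\varepsilon2)_+$ lies in the closed ideal generated by $e$, as required.

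The crux — and the only place where one genuinely needs more than nowhere scatteredness — is the passage from the fiberwise halving to a finite, uniform one over $\mathrm{Prim}(A)$. Two ingredients make it work. First, projectivity of $M_2(C_0((0,1]))$ turns Glimm's fiberwise data into honest square-zero elements of $B$, not merely approximate ones. Second, and decisively, one must replace the a priori infinite family $\{W_P\}$ by finitely many: having a basis of compact open sets for $\mathrm{Prim}(A)$ is exactly what lets one trap the relevant ideal inside an ideal with compact primitive spectrum, after which compactness produces the finite subcover. Under nowhere scatteredness alone there is no such compactness, and the number of local pieces cannot be bounded — which is, in essence, why the Global Glimm Problem remains open.
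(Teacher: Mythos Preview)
There is a genuine gap at the very first step. You claim that ``$A$ has the Global Glimm Property precisely when $\Cu(A)$ satisfies: for all $x'\ll x$ there are $y_1,\dots,y_k$ with $2y_i\leq x$ and $x'\leq y_1+\dots+y_k$.'' But this is exactly the definition of \emph{weak} $(2,\omega)$-divisibility, which by \autoref{thm:EqChar}(1) characterizes \emph{nowhere scatteredness}, not the Global Glimm Property. The Global Glimm Property corresponds to $(2,\omega)$-divisibility (\autoref{thm:EqChar}(2)): one needs a \emph{single} $d$ with $2d\leq x$ and $x'\leq nd$ for some $n$. Equivalently, at the level of elements, \autoref{dfn:GGP} asks for \emph{one} square-zero $r\in\overline{aAa}$ with $(a-\varepsilon)_+\lhd r$, whereas your construction only produces finitely many $r_1,\dots,r_m$ with $(a-\varepsilon)_+\lhd \sum_i r_i^*r_i$. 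Thus your argument, while internally consistent, only reproves the hypothesis of nowhere scatteredness; the topological dimension zero assumption is used to make the cover \emph{finite}, but finitely many divisors is still weak divisibility.

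This is not a cosmetic slip: the passage from ``finitely many $d_i$'' to ``a single $d$'' is precisely the content of the Global Glimm Problem (see \autoref{qst:AlgGGP} and \autoref{rmk:TWisDR}). The paper's proof (via \autoref{cor:GGPsr1dim0}) handles this step with the machinery of \autoref{sec:CuntzSgp}: it shows that topological dimension zero forces $\Cu(A)$ to be ideal-filtered and to have property~(V) (\autoref{thm:IFandVList}), and then \autoref{thm:MainGGP} uses these two properties to merge divisors --- first collapsing to two via ideal-filteredness, then to one via property~(V) and a second application of ideal-filteredness. Your fiberwise Glimm-plus-projectivity argument is a reasonable way to see weak divisibility, but to reach the Global Glimm Property you would still need a mechanism (such as making the $r_i$ ``orthogonal'' using the compact-open ideal structure, or invoking ideal-filteredness/(V)) to combine them into a single square-zero element; this is the missing idea.
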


If one assumes that the primitive ideal space is Hausdorff, topological dimension zero can be relaxed:

\begin{thm}[{\cite[Theorem~4.3]{BlaKir04GlimmHalving}}]
    Let $A$ be a nowhere scattered \ca{} with finite-dimensional Hausdorff primitive ideal space. Then, it has the Global Glimm Property.
\end{thm}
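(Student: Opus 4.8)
The plan is to pass to the $C_0(X)$-algebra picture over $X:=\mathrm{Prim}(A)$ and then glue fiberwise Glimm halvings over the finite-dimensional base. Since $\mathrm{Prim}(A)$ is Hausdorff, it is locally compact and $T_1$, so by the Dauns--Hofmann theorem $A$ is a continuous $C_0(X)$-algebra whose fiber $A_x=A/P_x$ at a (closed) point $x$ is simple. By \autoref{prp:PermProp}~(5) each fiber is nowhere scattered, hence simple and non-elementary, so by \autoref{exas:GGP}~(1) each $A_x$ has the Global Glimm Property. Using the formulation of \autoref{dfn:GGP} in terms of $^*$-homomorphisms out of $M_k(C_0((0,1]))$ (recalled right after that definition), one gets more: for every nonzero $b\in (A_x)_+$ and every $k$ there is a $^*$-homomorphism $M_k(C_0((0,1]))\to\overline{bA_xb}$ with full image, and hence, taking $k=2(d+1)$ with $d:=\dim X<\infty$, the hereditary subalgebra $\overline{bA_xb}$ contains $d+1$ pairwise orthogonal \emph{full} square-zero elements. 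For the rest of the sketch we assume in addition that $\mathrm{Prim}(A)$ is second countable (so that the local lifts below are available); the general case is handled in \cite[Theorem~4.3]{BlaKir04GlimmHalving}.

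Fix $a\in A_+$ and $\varepsilon>0$. Since $x\mapsto\|a(x)\|$ lies in $C_0(X)$, the set $K:=\{x:\|a(x)\|\ge\varepsilon\}$ is compact, and the closed ideal of $A$ generated by any $r\in\overline{aAa}$ corresponds to the open set $\{x:r(x)\neq 0\}$; therefore it suffices to produce a square-zero $r\in\overline{aAa}$ with $r(x)\neq 0$ for all $x\in K$, since then $(a-\varepsilon)_+\lhd r$ automatically, the fibers being simple. For each $x\in K$ pick a $^*$-homomorphism $M_{2(d+1)}(C_0((0,1]))\to\overline{a(x)A_xa(x)}$ with full image; by semiprojectivity of the cone $M_{2(d+1)}(C_0((0,1]))$ it lifts, over a suitable closed neighborhood $\overline{U_x}$ of $x$, to a $^*$-homomorphism $\Psi_x\colon M_{2(d+1)}(C_0((0,1]))\to\overline{aAa}|_{\overline{U_x}}$ whose fiber map at each $y\in U_x$ is nonzero. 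Since $\dim X\le d$, the cover $\{U_x\}_{x\in K}$ of $K$ has a finite open refinement $\{V_\lambda\}_{\lambda\in\Lambda}$ of multiplicity $\le d+1$; equivalently, $\Lambda$ splits into colour classes $\Lambda_0,\dots,\Lambda_d$ with the members of each $\Lambda_j$ pairwise disjoint. Fix a partition of unity $(\varphi_\lambda)_\lambda$ subordinate to $\{V_\lambda\}$ on a neighborhood of $K$.

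The main obstacle is to combine the local pieces into one square-zero element. Within a colour class $\Lambda_j$ the $V_\lambda$ are disjoint, so taking for each $\lambda$ a square-zero element $r_\lambda=\varphi_\lambda^{1/2}\,\Psi_{x(\lambda)}(z_\lambda)$ supported in $V_\lambda$ (with $V_\lambda\subseteq U_{x(\lambda)}$ and $z_\lambda$ one of the orthogonal square-zero matrix units of $M_{2(d+1)}(C_0((0,1]))$), the sum $s_j:=\sum_{\lambda\in\Lambda_j}r_\lambda$ is square-zero and nowhere vanishing on $\bigcup_{\lambda\in\Lambda_j}V_\lambda$. Then $r:=s_0+\dots+s_d$ is (approximately) square-zero precisely when the cross terms $s_js_{j'}$ ($j\neq j'$) vanish, i.e. when the contributions of distinct colours are mutually orthogonal in every fiber; and since at most $d+1$ colours meet over any point of $K$, while each $\overline{a(y)A_ya(y)}$ contains $d+1$ pairwise orthogonal full square-zero elements, there is exactly enough room to arrange this. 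Making the arrangement coherent requires a finite induction on $j=0,1,\dots,d$: having built $s_0,\dots,s_{j-1}$, one chooses, over each $V_\lambda$ with $\lambda\in\Lambda_j$, the matrix unit $z_\lambda$ (perturbing the lift $\Psi_{x(\lambda)}$ if necessary) so that $r_\lambda$ is orthogonal to $s_0+\dots+s_{j-1}$ there, using simplicity of the fibers and the multiplicity bound to guarantee that such a choice exists and stays full, and using finiteness of the cover together with semiprojectivity of $M_{2(d+1)}(C_0((0,1]))$ to keep the choices compatible over the overlaps $V_\lambda\cap V_\mu$. Since the relation $r^2=0$ is weakly stable, the approximately square-zero element produced this way can be perturbed to a genuine square-zero $r'\in\overline{aAa}$; carrying out the construction on the slightly larger compact set $\{\|a(y)\|\ge\varepsilon/2\}$ ensures that $r'$ still does not vanish on $K$, and \autoref{prp:CuSubProp}~(4) together with the observation above shows that $r'$ witnesses the Global Glimm Property for $a$ and $\varepsilon$. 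The delicate bookkeeping — the inductive orthogonalization over the $d+1$ colours and its coherence along overlaps, which is exactly where finite dimensionality of $X$ is spent — is the technical core and is carried out in detail in \cite[Theorem~4.3]{BlaKir04GlimmHalving}.
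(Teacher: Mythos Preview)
The survey does not give its own proof of this theorem; it is stated as a citation to \cite[Theorem~4.3]{BlaKir04GlimmHalving} with no argument, and the surrounding discussion (see \autoref{rmk:CX_NSCvsGGP}) explicitly flags it as requiring ``deeper techniques'' that the survey does not reproduce. So there is nothing in the paper to compare against beyond the reference itself.

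Your sketch is a faithful outline of the Blanchard--Kirchberg argument: pass to the continuous $C_0(X)$-bundle with simple non-elementary fibers via Dauns--Hofmann, lift fiberwise maps out of $M_{2(d+1)}(C_0((0,1]))$ locally by semiprojectivity, refine to a cover of multiplicity $\le d+1$, and use the $d+1$ pairwise orthogonal square-zero ``slots'' to make the colour classes orthogonal. You correctly identify the inductive orthogonalization over overlaps as the technical core and defer it to the original paper, which is honest. One small remark: the step ``perturbing the lift $\Psi_{x(\lambda)}$ if necessary so that $r_\lambda$ is orthogonal to $s_0+\dots+s_{j-1}$'' hides genuine work --- the previously constructed $s_i$ need not sit inside the image of $\Psi_{x(\lambda)}$, so one cannot simply pick an unused matrix unit; Blanchard and Kirchberg handle this with a more careful sequential lifting and cut-down procedure rather than a naive choice of $z_\lambda$. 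Since you already flag this as the place where the details live and cite the source, the sketch is acceptable as a roadmap, but that sentence as written oversells how automatic the orthogonalization is.
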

\end{pgr}

\begin{pgr}[Stable rank one]
The investigation into the Cuntz semigroup of stable rank one \ca{s} \cite{AntPerRobThi22CuntzSR1} provided solutions to several key problems for this class, including the Global Glimm Problem.

\begin{thm}[cf. {\cite[Theorem~9.1]{AntPerRobThi22CuntzSR1}}]\label{thm:GGPsr1}
    Let $A$ be a nowhere scattered \ca{} of stable rank one. Then, it has the Global Glimm Property.
\end{thm}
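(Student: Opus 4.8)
The plan is to route everything through the Cuntz semigroup. By the analysis of \autoref{sec:CuntzSgp}, the Global Glimm Property of \autoref{dfn:GGP} is equivalent to the following divisibility condition on $\Cu(A)$: for every pair $x'\ll x$ in $\Cu(A)$ there exists $y\in\Cu(A)$ with
\[
    2y\leq x,\andSep x'\leq\infty y.
\]
Here $y$ plays the role of $[rr^{*}]$ for a square-zero element $r\in\overline{aAa}$: one has $2y=[rr^{*}]+[r^{*}r]\leq[a]$ because $rr^{*}\perp r^{*}r$ and both lie in $\overline{aAa}$, while $x'=[(a-\varepsilon)_+]$ lies in the ideal generated by $r$. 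So the goal reduces to verifying this condition under the two hypotheses.

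First, nowhere scatteredness alone delivers a \emph{weak} version of the condition. Using that no hereditary sub-\ca{} of $A$ admits a finite-dimensional irreducible representation (\autoref{thm:MultChar}), a Glimm-halving argument at the level of $\Cu(A)$ yields, for each $x'\ll x$, \emph{finitely many} elements $y_1,\ldots,y_k\in\Cu(A)$ with $2y_i\leq x$ for all $i$ and $x'\leq y_1+\ldots+y_k$; this is the Cuntz-semigroup counterpart of \autoref{prp:TracCharNSca} and holds for \emph{any} nowhere scattered \ca{}. The entire content of the theorem is thus to upgrade a finite family of witnesses to a single one.

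This is where stable rank one is used, through \cite{AntPerRobThi22CuntzSR1}: when $A$ has stable rank one, $\Cu(A)$ satisfies much stronger regularity, most notably the Riesz decomposition property (if $w\leq u_1+u_2$ then $w=w_1+w_2$ with $w_i\leq u_i$) and is inf-semilattice ordered. I would exploit this to ``align'' the witnesses: first replace $x'$ by an intermediate $x'\ll x''\ll x$ and run the weak divisibility at $x''$; then use Riesz decomposition to split $x$ and the $y_i$ into mutually compatible summands, reassemble, and iterate the construction at ever finer scales so that the overlap errors are absorbed into the supremum of an increasing sequence. The resulting $y$ satisfies $2y\leq x$ (addition, hence multiplication by $2$, preserves suprema of increasing sequences) and $x''\leq\infty y$, so $x'\leq\infty y$, as required. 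I expect this recombination step to be the main obstacle: it is the only place where stable rank one is genuinely needed, and it is exactly the point at which the Global Glimm Problem --- open in general --- becomes tractable.

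Finally, feeding the verified $\Cu(A)$-condition back through \autoref{sec:CuntzSgp} yields \autoref{dfn:GGP}. One can also make this last translation explicit: given $a\in A_+$ and $\varepsilon>0$, choose $y$ as above with $x=[a]$ and $x'=[(a-\varepsilon')_+]$ for some $\varepsilon'$ slightly smaller than $\varepsilon$, and realize $y$ by orthogonal Cuntz-equivalent positive elements $b\perp c$ inside $\overline{aAa}$. By \autoref{prp:CuSubProp}~(2) one can write $(b-\eta)_+=s\,(c-\delta)_+\,s^{*}$ for suitable $\eta,\delta>0$ and $s\in A$; then $r:=b^{1/2}\,s\,(c-\delta)_+^{1/2}$ satisfies $r^{2}=0$ (since $b\perp c$ forces $(c-\delta)_+^{1/2}b^{1/2}=0$) and $rr^{*}=b^{1/2}(b-\eta)_+b^{1/2}\sim(b-\eta)_+$, so $r$ is a square-zero element of $\overline{aAa}$ with $(a-\varepsilon)_+\lhd r$ for small enough parameters. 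Hence $A$ has the Global Glimm Property.
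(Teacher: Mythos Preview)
Your overall strategy matches the paper's: translate the Global Glimm Property into $(2,\omega)$-divisibility of $\Cu(A)$ (\autoref{thm:EqChar}), use nowhere scatteredness to obtain \emph{weak} $(2,\omega)$-divisibility, and then invoke stable rank one to upgrade a finite family of witnesses to a single one. The last explicit translation back to square-zero elements is also correct in spirit and essentially the content of \cite[Theorem~5.3]{RobRor13Divisibility}.

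Where your proposal diverges from the paper is precisely at the upgrading step, and this is where it becomes too vague to count as a proof. You propose to use Riesz decomposition to ``split $x$ and the $y_i$ into mutually compatible summands, reassemble, and iterate\ldots so that the overlap errors are absorbed into the supremum of an increasing sequence.'' No concrete mechanism is given, and it is not at all clear that Riesz decomposition by itself lets you combine witnesses: splitting $x'\leq y_1+\cdots+y_k$ as $x'=x_1'+\cdots+x_k'$ with $x_i'\leq y_i$ does not help produce a single $y$ with $2y\leq x$ and $x'\lhd y$. The paper's route is more structured and avoids this: it isolates two abstract properties of $\Cu(A)$, \emph{ideal-filteredness} and \emph{property~(V)} (\autoref{dfn:IFandV}), and shows in \autoref{thm:MainGGP} that these two, together with nowhere scatteredness, suffice. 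Ideal-filteredness first reduces the family of divisors to \emph{two} with an additional common upper bound $c$ (this is \cite[Proposition~6.2]{ThiVil23Glimm}, and is where most of the work lies); property~(V) then produces $y,z$ with $y+z\leq x$ and $x''\llideal y,z$; a second application of ideal-filteredness yields $d\leq y,z$ with $x'\llideal d$, whence $2d\leq y+z\leq x$. For stable rank one, ideal-filteredness follows because $\Cu(A)$ is inf-semilattice ordered (you did identify this ingredient; see \autoref{rmk:InfandSup}), while property~(V) follows from residual stable finiteness (\cite[Theorem~5.6]{ThiVil23Glimm}) --- a fact entirely absent from your sketch. So the paper separates the argument into two clean pieces, each with its own source in stable rank one, whereas your plan conflates them into an unspecified iteration. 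If you want to repair your argument along its own lines, you would need to exhibit an explicit inductive construction that actually merges the $y_i$; absent that, the cleanest fix is simply to plug into \autoref{thm:IFandVList} and \autoref{thm:MainGGP}, as the paper does in \autoref{cor:GGPsr1dim0}.
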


Both Theorems \ref{thm:GGPdim0} and \ref{thm:GGPsr1} can be understood within the more general approach to the problem explained in \autoref{sec:CuntzSgp}.
\end{pgr}

\begin{pgr}[Nuclear dimension]
Although \autoref{thm:DimNucGGP} below cannot be found in the literature in its stated form, it follows as a direct combination of \cite[Theorem~3.1]{RobTik17NucDimNonSimple}, \cite[Theorem~5.7]{AntPerThiVil24arX:PureCAlgs} and the fact that pure \ca{s} satisfy the Global Glimm Property. Examples covered by the result include nowhere scattered \ca{s} of finite decomposition rank.

\begin{thm}\label{thm:DimNucGGP}
    Let $A$ be a nowhere scattered \ca{} of finite nuclear dimension. Assume that $A$ has no nonzero simple purely infinite quotients. Then, $A$ is pure. In particular, it has the Global Glimm Property.
\end{thm}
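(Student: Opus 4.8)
The plan is to prove that $A$ is pure, and then to read the Global Glimm Property off pureness. Recall that $A$ is pure precisely when $\Cu(A)$ is both almost unperforated and almost divisible, so I would establish these two conditions separately: finite nuclear dimension alone takes care of almost unperforation, while almost divisibility is the step that consumes the full strength of the hypotheses (finite nuclear dimension \emph{and} nowhere scatteredness \emph{and} the absence of nonzero simple purely infinite quotients).

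For almost unperforation I would quote \cite[Theorem~3.1]{RobTik17NucDimNonSimple}, to the effect that every \ca{} of finite nuclear dimension has strict comparison of positive elements by lower semicontinuous $2$-quasitraces; by \cite[Proposition~6.2]{EllRobSan11Cone} this is exactly almost unperforation of $\Cu(A)$. Morally, this reduces, through complemented partitions of unity, to the fact that finite nuclear dimension forces $\mathcal{Z}$-stable behaviour on simple subquotients, where R\o{}rdam's comparison theorem applies.

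The core of the argument is almost divisibility, and here I would invoke \cite[Theorem~5.7]{AntPerThiVil24arX:PureCAlgs}. The mechanism behind that result is to feed nowhere scatteredness --- in the form of \autoref{prp:TracCharNSca}, namely that no dimension function on $A\otimes\mathcal{K}$ has range $\NN\cup\{\infty\}$ and that there is always room strictly below $1$ --- into the central-sequence machinery of \cite{RobTik17NucDimNonSimple} already used in \autoref{prp:RobTikGGP}: finite nuclear dimension yields, for every $x'\ll x$ in $\Cu(A)$ and every $n\in\NN$, an approximate $\tfrac{1}{n}$-divisor of $x$ at the level of ranks, and nowhere scatteredness promotes that quasitracial data to an honest $d\in\Cu(A)$ with $nd\leq x\leq (n+1)d$. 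The hypothesis that $A$ has no nonzero simple purely infinite quotient is what keeps this quasitracial bookkeeping under control throughout $A$: on a simple purely infinite quotient almost divisibility holds automatically, since $2[a]=[a]$ there makes $[a]$ itself an approximate $\tfrac1n$-divisor of anything below it, but the present method does not patch the purely infinite and stably finite regimes --- doing so in general is precisely the Global Glimm Problem.

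Combining the two halves gives that $A$ is pure. To conclude, I would deduce the Global Glimm Property from pureness via the Cuntz-semigroup characterization of \autoref{sec:CuntzSgp}, using that $\Cu(A)$ carries a $\Cu(\mathcal{Z})$-multiplication whenever $A$ is pure \cite[Theorem~7.3.11]{AntPerThi18TensorProdCu}: the square-zero ``halvings'' demanded by the Global Glimm Property are manufactured from the divisible elements supplied by that module structure. (If needed to quote the cited results, a standard separabilisation argument reduces to the separable case, using that nowhere scatteredness, the absence of simple purely infinite quotients, and the Global Glimm Property behave well under the relevant inductive limits, \autoref{prp:PermProp} and \autoref{prp:PermPropGGP}, and that nuclear dimension does not increase under them.) I expect the divisibility step to be the main obstacle: it is the only place where finite-dimensionality is genuinely exploited, and it is where the Global Glimm Problem is effectively being resolved in this restricted setting.
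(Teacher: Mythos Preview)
Your proposal is correct and matches the paper's own argument essentially verbatim: the paper does not give a standalone proof but states that the result ``follows as a direct combination of \cite[Theorem~3.1]{RobTik17NucDimNonSimple}, \cite[Theorem~5.7]{AntPerThiVil24arX:PureCAlgs} and the fact that pure \ca{s} satisfy the Global Glimm Property'', which is precisely your decomposition into almost unperforation, almost divisibility, and the passage from pureness to the Global Glimm Property. A minor simplification for the last step: rather than routing through $\Cu(\mathcal{Z})$-multiplication, you can observe that almost divisibility trivially implies $(2,\omega)$-divisibility and then invoke \autoref{thm:EqChar}~(2) directly.
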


The non-simple Toms-Winter conjecture asks, in particular, if nowhere scattered \ca{s} of finite nuclear dimension are pure. This is equivalent (by \autoref{thm:PureCalgs}) to the following question:

\begin{qst}\label{qst:NowScatFinDimGGP}
    Let $A$ be a nowhere scattered \ca{} of finite nuclear dimension. Does $A$ have the Global Glimm Property?
\end{qst}
\end{pgr}

\section{A Cuntz semigroup translation}\label{sec:CuntzSgp}

We have postponed until this last section the discussion on how modern Cuntz semigroup theory enters the picture of the Global Glimm Problem, albeit those familiar with the field will have undoubtedly recognized its presence underlying many of the results presented thus far. Here, we offer a translation of the Global Glimm Problem through the lens of the Cuntz semigroup ---a perspective that has proven instrumental in several recent partial resolutions. Notably, Theorems \ref{thm:GGPdim0} and \ref{thm:GGPsr1} will emerge naturally from the framework developed in this section; see \autoref{cor:GGPsr1dim0}.

\begin{dfn}[{\cite[Definition~5.1]{RobRor13Divisibility}}]\label{dfn:Div}
    Let $k\in\NN$ and let $A$ be a \ca{}. We say that the Cuntz semigroup $\Cu (A)$ of $A$ is
    \begin{itemize}
        \item \emph{weakly $(k,\omega)$-divisible} if, for every $x',x\in\Cu (A)$ such that $x'\ll x$, there exist $n\in\NN$ and $d_1,\ldots ,d_n\in\Cu (A)$ such that 
        \[
            x'\leq d_1+\ldots +d_n
            \andSep 
            kd_j\leq x\text{ for each }j.
        \]
        \item \emph{$(k,\omega)$-divisible} if, for every $x',x\in\Cu (A)$ such that $x'\ll x$, there exist $n\in\NN$ and $d\in\Cu (A)$ such that 
        \[
            d'\leq ny
            \andSep 
            ky\leq d.
        \]
    \end{itemize}
\end{dfn}

Combining the results \cite[Theorem~8.9]{ThiVil24NowhereScattered} and \cite[Theorem~3.6]{ThiVil23Glimm} into a single statement, we obtain the following translations of the properties under study.

\begin{thm}\label{thm:EqChar}
    Let $A$ be a \ca{}. Then,
    \begin{enumerate}
        \item $A$ is nowhere scattered if and only if $\Cu (A)$ is weakly $(2,\omega )$-divisible\\
        (if and only if it is weakly $(k,\omega)$-divisible for every $k\in\NN$).
        \item $A$ has the Global Glimm Property if and only if $\Cu (A)$ is $(2,\omega )$-divisible\\
        (if and only if it is $(k,\omega)$-divisible for every $k\in\NN$).\footnote{That $(2,\omega)$-divisibility implies the Global Glimm Property was already contained in \cite[Theorem~5.3]{RobRor13Divisibility}.}
    \end{enumerate}
\end{thm}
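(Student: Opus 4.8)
The plan is to treat the two statements in parallel, since both reduce to dictionary translations between the ``approximately full square-zero element'' language of Definitions \ref{dfn:NowScat} and \ref{dfn:GGP} and the divisibility language of \autoref{dfn:Div}. The key observation linking the two worlds is that a square-zero element $r$ produces a pair of orthogonal positive elements $rr^*$ and $r^*r$ with $rr^* \sim r^*r$; conversely, a pair of mutually orthogonal Cuntz-equivalent positive elements can be ``packaged'' into a square-zero element after passing to a matrix amplification (this is essentially the $M_2(C_0((0,1]))$-picture recalled in the remark after \autoref{dfn:GGP}). So the first step is to fix, once and for all, the correspondence $r \leftrightarrow (rr^*, r^*r)$ and record that $(a-\varepsilon)_+ \lhd r$ translates, on the level of $\Cu(A)$, to the statement that $[(a-\varepsilon)_+]$ lies in the ideal of $\Cu(A)$ generated by $2[rr^*]$, together with $[rr^*] + [r^*r] = 2[rr^*] \leq [a]$ when $r \in \overline{aAa}$.

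Next I would prove statement (2), the Global Glimm case, as it is the cleaner of the two; statement (1) then follows by the same argument with ``$d_1 + \cdots + d_n$'' replacing ``$nd$''. For the forward direction, suppose $A$ has the Global Glimm Property and let $x' \ll x$ in $\Cu(A)$. Write $x = [a]$ for $a \in (A\otimes\K)_+$; by the definition of $\ll$ there is $\varepsilon > 0$ with $x' \leq [(a-\varepsilon)_+]$. Applying the Global Glimm Property to $a$ (inside the stable algebra $A\otimes\K$, which also has the property by \autoref{prp:PermPropGGP}) with tolerance $\varepsilon$ yields a square-zero $r \in \overline{aAa}$ with $(a-\varepsilon)_+ \lhd r$. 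Set $d = [rr^*]$. Then $2d = [rr^*] + [r^*r] \leq [a] = x$ because $r \in \overline{aAa}$, while $x' \leq [(a-\varepsilon)_+] \leq n d$ for suitable $n$, since $(a-\varepsilon)_+$ sits in the algebraic ideal generated by $rr^*$ and hence is Cuntz-dominated by some $(rr^*)^{\oplus n}$. Passing from $k=2$ to arbitrary $k$ is then a routine iteration (divide $d$ again), which I would phrase as an induction citing the corner-passes-to-corners part of \autoref{prp:PermPropGGP}. For the converse, assume $\Cu(A)$ is $(2,\omega)$-divisible; given $a \in A_+$ and $\varepsilon>0$, apply divisibility to $x' = [(a-\varepsilon)_+] \ll [a] = x$ (compact containment holds by the choice of $\varepsilon$ refined slightly, e.g. to $[(a-\varepsilon)_+] \ll [(a-\varepsilon/2)_+]$) to obtain $d$ with $2d \leq [a]$ and $[(a-\varepsilon)_+] \leq nd$. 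Realising $d = [e]$ with $e \in \overline{aAa}_+$ (possible since $2d \leq [a]$ forces $d$ into the ideal generated by $a$, and one can arrange the representative inside the corner), the inequality $2[e] \leq [a]$ together with \autoref{prp:CuSubProp}~(5) lets one find $e_1 \perp e_2$ in $\overline{aAa}$ with $e_1, e_2 \sim e$; the Cuntz equivalence $e_1 \sim e_2$ is then upgraded to a square-zero element $r$ with $rr^* \sim e_1$, $r^*r \sim e_2$ (this is where one invokes the standard fact that orthogonal Cuntz-equivalent positive elements give a square-zero element, cf. the proof of \autoref{lma:PIimpGGP} read backwards), and $(a-\varepsilon)_+ \lhd r$ since $[(a-\varepsilon)_+] \leq n[e_1] = n[rr^*]$.

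The main obstacle I anticipate is the passage from an abstract element $d \in \Cu(A)$ satisfying $2d \leq [a]$ to a concrete pair of \emph{orthogonal} positive elements $e_1, e_2$ in the corner $\overline{aAa}$ with $e_1 \sim e_2 \sim e$ and, crucially, the construction of a genuine square-zero $r$ from them with the ideal-containment $(a-\varepsilon)_+ \lhd r$ intact. Naively, $2d \leq [a]$ only gives $e_1 \oplus e_2 \precsim a$ in $M_2$, not orthogonal copies literally inside $\overline{aAa}$; bridging this gap requires the $\varepsilon$-$\delta$ manoeuvres of \autoref{prp:CuSubProp}~(1),(2),(4) applied carefully, and is precisely the content of \cite[Theorem~3.6]{ThiVil23Glimm} which I would cite for the hardest part rather than reproduce. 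I would therefore present the square-zero $\leftrightarrow$ orthogonal-pair dictionary as a lemma (or quote it), keep the $\Cu$-semigroup bookkeeping explicit, and relegate the delicate norm-estimate surgery to the cited references, so that the proof here is genuinely a translation and not a redevelopment of \cite{ThiVil23Glimm,ThiVil24NowhereScattered}. For statement (1), the only change is that weak divisibility allows a \emph{sum} $d_1 + \cdots + d_n$ rather than a multiple $nd$, which corresponds to finitely many square-zero elements whose ranges are orthogonal and jointly dominate $(a-\varepsilon)_+$; the \autoref{dfn:NowScat} side then matches via the characterization \autoref{thm:MultChar}~(v) that no hereditary subalgebra has a finite-dimensional irreducible representation, which is exactly the obstruction to such a finite family existing — and here I would lean on \cite[Theorem~8.9]{ThiVil24NowhereScattered} for the equivalence with weak $(2,\omega)$-divisibility.
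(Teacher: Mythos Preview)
The paper does not prove this theorem at all: the sentence preceding it simply records that the statement combines \cite[Theorem~8.9]{ThiVil24NowhereScattered} with \cite[Theorem~3.6]{ThiVil23Glimm}, and no argument is given. Your plan to sketch the dictionary and then cite these same references for the technical core is therefore already more than the paper offers, and the broad outline for~(2) is on the right track.

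Two points in the sketch are off, however. First, in the forward direction of~(2) you justify $[(a-\varepsilon)_+]\leq nd$ by saying $(a-\varepsilon)_+$ lies in the \emph{algebraic} ideal generated by $rr^*$; the Global Glimm Property only places it in the closed ideal, and closed-ideal membership alone does not give Cuntz domination by a finite multiple. The fix is exactly the $\varepsilon/2$-trick you already use in the converse: apply the property at a smaller tolerance so that $x'\ll[(a-\varepsilon/2)_+]\lhd rr^*$, and then use $\ll$ against $\sup_n n[rr^*]$ to extract a finite $n$. Second, and more substantively, the ``standard fact that orthogonal Cuntz-equivalent positive elements give a square-zero element'' is not a fact in that generality. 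What \autoref{lma:PIimpGGP} actually does is build $r=a^{1/2}s_1^*s_2a^{1/2}$ directly from a R{\o}rdam-lemma witness $s=(s_1,s_2)^T$ for $e^{\oplus 2}\precsim a$; it never manufactures an orthogonal pair $e_1\perp e_2$ with $e_1\sim e_2$ and then lifts, because no such lift need exist. Moreover, verifying that the resulting $r$ satisfies $(a-\varepsilon)_+\lhd r$ --- rather than merely $r\in\overline{aAa}$ and $r^2=0$ --- is the genuinely delicate content of \cite[Theorem~3.6]{ThiVil23Glimm}. Your detour through an orthogonal Cuntz-equivalent pair should be dropped; go straight from $2d\leq[a]$ to the $s_1,s_2$ construction and cite the reference for the ideal-containment verification. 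Likewise, your reading of~(1) as ``finitely many square-zero elements whose ranges are orthogonal'' misdescribes weak $(2,\omega)$-divisibility, which imposes no orthogonality among the $d_j$ and does not connect to \autoref{thm:MultChar}(v) in the way you suggest; the honest move is simply to cite \cite[Theorem~8.9]{ThiVil24NowhereScattered}, exactly as the paper does.
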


\begin{cor}\label{prp:ConRobTik}
    Let $A$ be a separable \ca{} of finite nuclear dimension. Then,
    \begin{enumerate}
        \item $A$ is nowhere scattered if and only if $F(A)$ is.
        \item $F(A)$ has the Global Glimm Property if and only if $A$ is $\mathcal{Z}$-stable.
    \end{enumerate}
\end{cor}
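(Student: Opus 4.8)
The statement to prove is \autoref{prp:ConRobTik}: for a separable \ca{} $A$ of finite nuclear dimension, (1) $A$ is nowhere scattered iff $F(A)$ is, and (2) $F(A)$ has the Global Glimm Property iff $A$ is $\mathcal{Z}$-stable. The key point is that \autoref{prp:RobTikGGP} already gives the forward direction of (1) and the forward direction of (2), so all that remains is to establish the two converses. The plan is to extract these converses from the Cuntz-semigroup characterizations of \autoref{thm:EqChar}, exploiting that $F(A)$ detects $\mathcal{Z}$-stability through divisibility of its Cuntz semigroup.

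For the converse of (1) — if $F(A)$ is nowhere scattered then $A$ is — I would argue contrapositively. If $A$ is not nowhere scattered, then by \autoref{thm:TopCharNSca} (or directly from \autoref{dfn:NowScat}) $A$ has a nonzero elementary ideal-quotient $I/J$, so some quotient of $A$ has a one-dimensional irreducible representation, equivalently (\autoref{thm:MultChar}) some hereditary sub-\ca{} admits a finite-dimensional irreducible representation. One then wants to see that this elementariness is `inherited' by the central sequence algebra. Here the cleanest route is to note that $F$ is functorial enough that a quotient map $A \to A/J$ induces a unital map $F(A) \to F(A/J)$, and for the elementary piece one computes $F$ directly: if $B = \mathcal{K}(H)$ then $F(B) = \mathbb{C}$ (the central sequence algebra of the compact operators is trivial in the relevant sense), so $F(A)$ surjects onto — or contains a unital copy of — a finite-dimensional algebra, contradicting nowhere scatteredness of $F(A)$ via \autoref{prp:PermPropGGP}/\autoref{prp:PermProp} (extensions and quotients). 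The precise bookkeeping of how $F$ interacts with ideals and quotients is the part that needs care, but it is standard Kirchberg-algebra-of-central-sequences machinery.

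For the converse of (2) — if $A$ is $\mathcal{Z}$-stable then $F(A)$ has the Global Glimm Property — the slick argument is: $\mathcal{Z}$-stability of $A$ gives a unital embedding $\mathcal{Z} \to F(A)$ (this is one of the standard characterizations of $\mathcal{Z}$-stability for separable $A$, via Kirchberg's $F(A)$; it is the $F(A)$-analogue of the statement that $A$ is $\mathcal{Z}$-stable iff there is a unital map $\mathcal{Z} \to A_\omega \cap A'$). Since $\mathcal{Z}$ is simple and non-elementary, it has the Global Glimm Property (\autoref{exas:GGP}), hence contains a full square-zero element; its image in $F(A)$ is a full square-zero element (fullness is preserved by a unital map into $F(A)$ since $F(A)$ has a unit and the image generates it as an ideal — actually one gets two orthogonal full positive elements $rr^*, r^*r$). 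Then invoke \cite[Theorem~1.2]{RobTik11HilbCommutative} exactly as in the proof of \autoref{prp:RobTikGGP}(2), or more directly observe that $F(A) \cong F(A) \otimes \mathcal{Z}$-type reasoning plus \autoref{exas:GGP}(2) gives the Global Glimm Property outright. The honest obstacle is that for $A$ merely $\mathcal{Z}$-stable (not assumed nuclear beyond finite nuclear dimension) one must be sure the relevant characterization of $\mathcal{Z}$-stability in terms of $F(A)$ is available in this generality — this is where I would cite the appropriate result from Kirchberg's or Kirchberg–Rørdam's work on $F(A)$.

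The main obstacle I anticipate is not the square-zero/divisibility bookkeeping but rather pinning down the exact functoriality of $A \mapsto F(A)$ needed for the converse of (1): one needs that passing to an elementary ideal-quotient of $A$ produces a finite-dimensional (hence scattered) quotient or subalgebra of $F(A)$, and while morally clear this requires either a direct computation of $F(\mathcal{K})$ together with the behaviour of $F$ under ideals, or an appeal to the Cuntz-semigroup side via \autoref{thm:EqChar}(1) plus a comparison of $\Cu(F(A))$ with $\Cu(A)$ in the presence of finite nuclear dimension (the latter comparison is essentially \cite[Theorem~6.1]{RobTik17NucDimNonSimple} combined with \cite[Lemma~5.3]{AntPerRobThi24TracesUltra}, already used in the proof of \autoref{prp:RobTikGGP}(1)). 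I would prefer the Cuntz-semigroup route for uniformity with the rest of \autoref{sec:CuntzSgp}: show that $A$ nowhere scattered fails $\Rightarrow$ $\Cu(A)$ is not weakly $(2,\omega)$-divisible $\Rightarrow$ (by the divisibility transfer to $F(A)$) $\Cu(F(A))$ is not weakly $(2,\omega)$-divisible $\Rightarrow$ $F(A)$ not nowhere scattered, which is the promised "easy consequence of Cuntz semigroup methods."
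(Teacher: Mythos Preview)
Your overall structure is right --- only the two converse implications need work --- but both of your proposed routes have genuine gaps in the key transfer step.

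\textbf{Converse of (1).} Your ``preferred'' Cuntz-semigroup route is the one the paper takes, but you have the transfer mechanism backwards. The references you cite (\cite[Theorem~6.1]{RobTik17NucDimNonSimple}, \cite[Lemma~5.3]{AntPerRobThi24TracesUltra}) are what push divisibility \emph{from} $A$ \emph{to} $F(A)$; they underpin the forward implication already handled in \autoref{prp:RobTikGGP}. For the converse one needs the opposite direction: starting from weak $(2,\omega)$-divisibility of $[1_{F(A)}]$ (given by \autoref{thm:EqChar} once $F(A)$ is nowhere scattered), the paper picks witnesses $e_1,\ldots,e_n\in F(A)_+$ with $e_i^{\oplus 2}\precsim 1$ and $1\precsim e_1\oplus\cdots\oplus e_n$, and then \emph{multiplies} these relations by a given $a\in A_+$ (legitimate because $a$ is central in $A_\omega$ relative to $F(A)$) to obtain $(ae_i)^{\oplus 2}\precsim a$ and $a\precsim ae_1\oplus\cdots\oplus ae_n$. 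That is the missing idea: centrality turns divisibility of the unit of $F(A)$ into divisibility of every $[a]$ in $\Cu(A_\omega)$, which one then pulls back to $\Cu(A)$. Your contrapositive sketch via functoriality of $F$ and $F(\mathcal{K})\cong\mathbb{C}$ is a genuinely different route; it can be made to work (essentially via characters on $F(A)$ \`a la \cite{KirRor15CentralSeqCharacters}), but it is not the argument the paper has in mind and, as you note, needs nontrivial bookkeeping on how $F$ interacts with ideals.

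\textbf{Converse of (2).} You correctly locate the unital embedding $\mathcal{Z}\hookrightarrow F(A)$ and obtain a full square-zero element $r\in F(A)$, but then you stop too early. A single full square-zero element gives only Glimm halving at the unit, not the Global Glimm Property: you still need, for \emph{every} $a\in F(A)_+$, a square-zero element in $\overline{aF(A)a}$ that dominates $(a-\varepsilon)_+$ in the ideal sense. Your proposal to ``invoke \cite[Theorem~1.2]{RobTik11HilbCommutative} as in \autoref{prp:RobTikGGP}(2)'' is misplaced --- that reference deduces $\mathcal{Z}$-stability of $A$, which is your hypothesis, not your goal. The paper closes the gap with Kirchberg's \cite[Proposition~1.12]{Kir06CentralSeqPI}: given $a\in F(A)_+$, one finds a unital embedding $\varphi\colon C^*(1,r)\to F(A)\cap C^*(a)'$, so that $\varphi(r)$ commutes with $a$ and $a\varphi(r)a$ is the desired square-zero element in $\overline{aF(A)a}$. (Equivalently, one can rerun the multiplication trick from~(1) with $(2,\omega)$-divisibility in place of weak $(2,\omega)$-divisibility.) Your alternative ``$F(A)\cong F(A)\otimes\mathcal{Z}$-type reasoning'' does not apply as stated, since $F(A)$ is not separable and the usual tensorial-absorption arguments are unavailable.
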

\begin{proof}[Sketch of the proof]
    \autoref{prp:RobTikGGP} shows the forward implication of both (1) and (2). Thus, assume first that $F(A)$ is nowhere scattered. Then, it follows from \autoref{thm:EqChar} above that the unit in $F(A)$ is weakly $(2,\omega )$-divisible, that is, there exist $e_1,\ldots,e_n\in F(A)_+$ and $n\in\NN$ such that $e_i^{\oplus 2}\precsim 1_{F(A)}$ and $1_{F(A)}\precsim e_1\oplus\ldots\oplus e_n$ in the appropiate matrix amplifications of $F(A)$.

    Let $a\in A_+$. Embedding $A$ in $A_\mathcal{U}$ and `multiplying' the previous subequivalences by a suitable diagonal of $a$'s (using that $a$ commutes with every element in $F(A)$), we obtain
    \[
        (e_ia)^{\oplus 2}\precsim a,\andSep
        a\precsim e_1a\oplus\ldots\oplus e_na.
    \]

    This shows that $[a]$ is weakly $(2,\omega)$-divisible in $\Cu (A_\mathcal{U})$. Standard techniques can now be used to prove that, in fact, $[a]\in\Cu (A)$ is also weakly $(2,\omega)$-divisible in $\Cu  (A)$.\footnote{Crucially, in this step we use that the definition of weakly $(2,\omega)$-divisibility gives enough space (in the form of $x'\ll x$) to allow for cut-downs and other approximations.} Finally, \cite[Lemma 7.2]{ThiVil23arX:Soft} proves that $\Cu (A)$ is weakly $(2,\omega)$-divisible whenever $[a]\in\Cu (A)$ is weakly $(2,\omega)$-divisible for every $a\in A_+$.

    To prove (2), one can use that $A$ is $\mathcal{Z}$-stable if and only if $\mathcal{Z}$ embeds unitally in $F(A)$ (by \cite[Theorem~2.3]{TomWin07ssa} and \cite{Ror04StableRealRankZ}). Thus, the unit in $F(A)$ is almost divisible, and in particular $(2,\omega)$-divisible. Setting $e_1=\ldots=e_n$ in the proof of (1) shows that every element in $A_+$ is $(2,\omega )$-divisible. This is equivalent to the Global Glimm Property, as desired.\footnote{Alternatively, one can use the unital embedding $\mathcal{Z}\to F(A)$ and the existence of a full square-zero element in $\mathcal{Z}$ to obtain a full square-zero element $r$ in $F(A)$. Given any element $a\in F(A)_+$, there is a unital injective $^*$-homomorphism $\varphi$ from $C^*(1,r)$ to $F(A)\cap C^*(a)'$ by \cite[Proposition~1.12]{Kir06CentralSeqPI}. Thus, the element $a\varphi(r)a$ is a full square-zero element in $\overline{aF(A)a}$, which proves that $F(A)$ satisfies the Global Glimm Property.}
\end{proof}

With our new Cuntz semigroup picture at hand, we can translate the Global Glimm Problem (\autoref{qst:GGP}) as follows:
\begin{qst}\label{qst:AlgGGP}
    Let $A$ be a \ca{} whose Cuntz semigroup $\Cu (A)$ is weakly $(2,\omega )$-divisible. Is $\Cu (A)$ $(2,\omega )$-divisible?
\end{qst}

Note that this rephrasing of the problem now makes it a dimension reduction question: Assuming that you can `divide' each element using finitely many divisors (weak $(2,\omega)$-divisibility), can you make do with a single divisor ($(2,\omega)$-divisibility)?

\begin{rmk}\label{rmk:TWisDR}
    We stated in the introduction that (Q1) ---that is, the question of whether every separable nowhere scattered \ca{} of finite nuclear dimension is $\mathcal{Z}$-stable--- is a question about dimension reduction. What comes next is a justification of this statement:

    Let $A$ be a unital \ca{}, and let $s(1,A)$ denote the least natural number $k$ for which there exist nonzero pairwise distinct elements $y_1,\ldots ,y_k\in\Cu (A)$ such that $2y_i\leq [1]$ and $[1]\leq m_1y_1+\ldots +m_ky_k$ for some $m_1,\ldots ,m_k\in\NN\setminus\{ 0\}$. If no such $k$ exists, set $s(1,A)=\infty$.
    
    Note that $s(1,A)$ is finite whenever $\Cu (A)$ is weakly $(2,\omega)$-divisible (applying \autoref{dfn:Div} to the pair $[1]=[(1-\varepsilon)_+]\ll [1]$). Further, $s(1,A)=1$ whenever $\Cu (A)$ is $(2,\omega )$-divisible.

    Now let $A$ be a separable (possibly non-unital) \ca{}. Then, $F(A)$ is unital. One can show, following the same ideas from the proof of \autoref{prp:ConRobTik}, that $F(A)$ is nowhere scattered if and only if $s(1,F(A))$ is finite, while $F(A)$ has the Global Glimm Property if and only if $s(1,F(A))=1$.

    Using our reformulation of the Global Glimm Problem (\autoref{qst:AlgGGP}) and \autoref{cor:EqTWGGP}, we see that the following statements are equivalent:
    \begin{itemize}
        \item[(i)] Every separable nowhere scattered \ca{} of finite nuclear dimension is $\mathcal{Z}$-stable.
        \item[(ii)] For every separable nowhere scattered \ca{} $A$ of finite nuclear dimension we have that $s(1,F(A))$ is finite if and only if $s(1,F(A))=1$. 
    \end{itemize}

    Thus, (Q1) admits a rephrasing as a dimension reduction question.
\end{rmk}

Although \autoref{qst:AlgGGP} remains largely an analytic problem, the statement now has an algebraic flavor. Using this Cuntz semigroup translation, one can pinpoint the precise properties needed to upgrade nowhere scatteredness to the Global Glimm Property. These should be viewed as very weak versions of having infima and suprema in the Cuntz semigroup (\autoref{rmk:InfandSup}).

As with elements in a \ca{}, we use $[a]\lhd [b]$ to denote that $a$ is in the closed ideal generated by $b$. Further, we write $[a]\llideal [b]$ if there exists $\varepsilon>0$ such that $a\lhd (b-\varepsilon)_+$.

\begin{dfn}[{\cite[Section~4~\&~5]{ThiVil23Glimm}}]\label{dfn:IFandV}
    Let $A$ be a \ca{}. We say that its Cuntz semigroup $\Cu (A)$
    \begin{itemize}
        \item is \emph{ideal-filtered} if, for every $v',v,x,y\in\Cu (A)$ such that 
        \[
            v'\ll v\llideal x, y,
        \]
        there exists $z\in\Cu (A)$ such that 
        \[
            v'\llideal z \leq x,y.
        \]
        \item has \emph{property (V)}\footnote{Although this is not the original definition from \cite[Definition~5.1]{ThiVil23Glimm}, one can check that they are equivalent. We provide a sketch in this footnote in the separable case: First, note that \cite[Definition~5.1]{ThiVil23Glimm} trivially implies our notion. Conversely, if $x,d_1',d_1,d_2',d_2,c$ are elements in $\Cu (A)$ such that $d_i'\ll d_i\ll c$ and $c+d_i\ll x$ for $i=1,2$, let $x'\in\Cu (A)$ be $\ll $-below $x\wedge \infty(d_1+d_2)$ (the existence of this infimum is guaranteed by \cite[Theorem~2.4]{AntPerRobThi21Edwards}) but large enough so that $d_i'\ll x'$. Using property (V) as stated in this survey, one checks that the elements $y,z$ satisfy the desired conditions in \cite[Definition~5.1]{ThiVil23Glimm}.} if, for every $c,d_1,d_2,x',x\in\Cu (A)$ such that
        \[
            x'\ll x,\quad 
             d_1,d_2\ll c,\quad 
            c+d_1,c+d_2\ll x,\andSep  
            x'\llideal d_1+d_2
        \]
        there exist $y,z\in\Cu (A)$ such that
        \[
            y+z\leq x,\andSep
            x'\llideal y, z.
        \]
    \end{itemize}
\end{dfn}

\begin{rmk}\label{rmk:InfandSup}
    Let us assume that $\Cu (A)$ is \emph{inf-semilattice ordered}, that is, infima of finite sets exist and the equality $(x+z)\wedge (y+z)=(x\wedge y)+z$ holds. If $\Cu (A)$ is inf-semilattice ordered, then it is ideal-filtered. Indeed, simply set $z=x\wedge y$ and note that $v'\ll v\lhd z$. As an example, the Cuntz semigroup of any separable \ca{} of stable rank one is inf-semilattice ordered and thus ideal-filtered; see \cite[Theorem~3.8]{AntPerRobThi22CuntzSR1}.

    Now assume that $\Cu (A)$ is \emph{sup-semilattice ordered}, that is, suprema of finite sets exist and the equality $(x+z)\vee (y+z)=(x\vee y)+z$ holds. Then, one sees that $\Cu (A)$ has property (V) by setting $y=c$ and $z=d_1\vee d_2$. An example of sup-semilattice ordered Cuntz semigroup is given by any commutative \ca{} with spectrum of dimension at most $1$; see e.g. \cite[Lemma~4.17]{Vil21arX:CommCuAI}.

    It is of note that being inf-semilattice ordered happens often (perhaps even for the Cuntz semigroup of every pure \ca{}), while being sup-semilattice ordered is much rarer. This imbalance between the notions can be clearly seen in the ordered semigroup ${\rm LAff}(K,(-\infty ,\infty])$ of lower-semicontinuous, affine functions on a Choquet simplex $K$, which is always an inf-semilattice \cite[Lemma~3.7]{Thi20RksOps} but is a sup-semilattice if and only if $K$ is Bauer; cf. \cite[Theorem~II.4.1]{Alf71CpctCvxSets}.
\end{rmk}

The importance of ideal-filteredness and property (V) is that they are satisfied in a variety of situations regardless of nowhere scatteredness. We note that, while in \cite{ThiVil23Glimm} it was unclear how to deduce property (V) from topological dimension zero, we show in \autoref{thm:IFandVList} below that the reformulation of property (V) given here does indeed follow from this assumption.

\begin{thm}\label{thm:IFandVList}
    Let $A$ be a \ca{} of either topological dimension zero or stable rank one. Then, $\Cu (A)$ is ideal-filtered and has property (V).
\end{thm}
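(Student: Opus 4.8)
The plan is to treat the two structural hypotheses separately, in each case verifying that the Cuntz semigroup satisfies a strong lattice-type property and then invoking the two observations recorded in \autoref{rmk:InfandSup}, namely that an inf-semilattice ordered $\Cu$-semigroup is ideal-filtered, and that a sup-semilattice ordered $\Cu$-semigroup has property (V). Since \autoref{rmk:InfandSup} already handles the stable rank one case for ideal-filteredness (via \cite[Theorem~3.8]{AntPerRobThi22CuntzSR1}, the Cuntz semigroup of such an algebra is inf-semilattice ordered, hence ideal-filtered), the real content is: (a) property (V) for stable rank one algebras, and (b) both ideal-filteredness and property (V) for topological dimension zero algebras. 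For (a), I would not try to produce a sup-semilattice structure --- stable rank one algebras need not have one --- but instead argue directly: given $c,d_1,d_2,x',x$ as in the hypothesis of property (V), the element $d_1 + d_2$ already satisfies $x' \llideal d_1 + d_2$, and using that $c + d_i \ll x$ one can split $d_1 + d_2$ as a sum of two pieces each $\leq x$ while preserving that the ideal they generate contains $x'$; the clean way to do this is to pass through the inf-semilattice structure, writing $y = (d_1 + d_2) \wedge x$ (say) and $z$ chosen via the existence of infima, checking the ideal containment survives because ideals are preserved by $\ll$-downward moves and $x' \ll x$ gives the slack needed.

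For (b), the topological dimension zero case, the key external input is that the primitive ideal space admits a basis of compact-open sets, which translates into a rich supply of projection-like (i.e. ``$\ll$-idempotent'' / compact) elements in $\Cu(A)$ corresponding to ideals: for every ideal $I$ and every $v \ll$ (the generator of) $I$, there is a compact element $e$ with $v \ll e \llideal I$ and $e \leq$ that generator. With such elements in hand, ideal-filteredness follows by taking $v' \ll v \llideal x, y$, replacing $x$ and $y$ by their ``compact-open truncations'' $e_x, e_y$ that still dominate $v$ in the ideal sense and sit below $x$ and $y$ respectively, and then setting $z = e_x \wedge e_y$ or, if infima are not available globally, $z$ = the compact element associated to the intersection of the two ideals (which is again a compact-open set in $\mathrm{Prim}(A)$, hence gives a compact element of $\Cu(A)$); one checks $v' \llideal z \leq x, y$. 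Property (V) in this setting is similar but requires juggling the five elements: from $x' \llideal d_1 + d_2$ and the compactness available at dimension zero, one truncates $c, d_1, d_2$ to compact elements, uses that the ideal generated by $d_1 + d_2$ decomposes along the compact-open basis, and extracts $y, z$ below $x$ with the right ideal containment --- here the fact, noted in the paragraph preceding the theorem, that the reformulation of property (V) given in \autoref{dfn:IFandV} (as opposed to the original in \cite{ThiVil23Glimm}) is designed to follow from dimension-zero considerations, is precisely what makes this go through.

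The main obstacle I anticipate is property (V) in the topological dimension zero case. Ideal-filteredness for stable rank one is essentially immediate from the cited result, ideal-filteredness for dimension zero is a fairly direct manipulation of compact-open sets, and property (V) for stable rank one can be reduced to the inf-semilattice structure with some care. But property (V) at dimension zero involves the interaction of an ambient element $c$, two elements $d_1, d_2$ compactly contained in $c$, and a target ideal generated by $d_1 + d_2$, all to be fit below a single $x$; extracting $y$ and $z$ requires a genuine decomposition argument along the compact-open basis together with the approximation slack ($v' \ll v$, $x' \ll x$, $d_i' \ll d_i$) to absorb the errors, and the bookkeeping is delicate. I would spend most of the write-up there, likely invoking \cite[Theorem~2.4]{AntPerRobThi21Edwards} for existence of certain infima and possibly \cite[Theorem~2.3]{NgThiVil25arX} or its internal lemmas for the compact-open structure, and organizing the argument so that the dimension-zero hypothesis is used exactly once, at the point where an element of $\Cu(A)$ is replaced by a compact element generating the same (or a slightly smaller) ideal.
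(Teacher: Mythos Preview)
Your treatment of ideal-filteredness is fine in both cases and matches the paper (stable rank one via the inf-semilattice structure of \cite[Theorem~3.8]{AntPerRobThi22CuntzSR1}, topological dimension zero via the compact-open basis, both deferred to \cite[Section~4]{ThiVil23Glimm}). The problems are with property~(V), where your plan diverges from the paper in both cases and in one of them has a real gap.

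For stable rank one, you propose to extract property~(V) from the inf-semilattice structure, setting ``$y = (d_1+d_2)\wedge x$ and $z$ chosen via the existence of infima''. This does not work as stated: property~(V) asks for \emph{two} elements $y,z$ with $y+z\leq x$ and $x'\llideal y$, $x'\llideal z$, and infima give you no mechanism to split $d_1+d_2$ into two pieces \emph{each} of which still generates an ideal containing $x'$. (Indeed, \autoref{rmk:InfandSup} uses the inf-semilattice structure only for ideal-filteredness; the sup-semilattice structure is what gives property~(V), via $y=c$ and $z=d_1\vee d_2$, and stable rank one algebras need not have that.) The paper takes a completely different route here: it invokes \cite[Theorem~5.6]{ThiVil23Glimm}, which establishes property~(V) for any \emph{residually stably finite} \ca{}, and stable rank one implies residual stable finiteness. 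You should not try to squeeze this out of infima.

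For topological dimension zero, your instinct that this is the delicate case is correct, but the paper's resolution is much sharper than the decomposition-along-compact-opens argument you outline. The paper extracts from the proof of \cite[Theorem~2.3]{NgThiVil25arX} the following: whenever $x'\ll x$, $2d_1,2d_2\ll x$, and $x'\llideal d_1+d_2$, there is a \emph{single} $d$ with $2d\leq x$ and $x'\llideal d$. Given this, property~(V) is immediate by taking $y=z=d$ (noting that the hypotheses $d_i\ll c$ and $c+d_i\ll x$ force $2d_i\leq c+d_i\ll x$). So rather than bookkeeping five elements along a compact-open decomposition, the whole argument collapses to producing one divisor and doubling it.
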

\begin{proof}[Idea of the proof and references]
    \cite[Section~4]{ThiVil23Glimm} contains the proofs that either topological dimension zero or stable rank one implies ideal-filteredness of $\Cu (A)$.     \cite[Theorem~5.6]{ThiVil23Glimm} shows that $\Cu (A)$ has property (V) whenever $A$ is residually stably finite. In particular, $\Cu (A)$ has property (V) whenever $A$ is of stable rank one.

    Finally, assume that $A$ has topological dimension zero. It is shown in the proof of \cite[Theorem~2.3]{NgThiVil25arX} that, whenever $d_1,d_2,x',x\in\Cu (A)$ satisfy 
    \[
        x'\ll x,\quad 
        2d_1, 2d_2\ll x,\andSep 
        x'\llideal d_1+d_2,
    \]
    then there exists $d\in\Cu (A)$ such that $2d\leq x$ and $x'\llideal d$.

    This property implies (V). Indeed, if $c,d_1,d_2,x',x\in\Cu (A)$ are as in \autoref{dfn:IFandV}, we get $2d_i\leq c+d_i\ll x$ for $i=1,2$. Thus, we can apply the stated property to obtain an element $d$ such that $2d\leq x$ and $x'\llideal  d$. Setting $y=z=d$, we see that $\Cu (A)$ has (V).
\end{proof}

As mentioned, one can use \autoref{thm:EqChar} to show that the Global Glimm Property implies the two notions from \autoref{dfn:IFandV}. That the converse holds under the presence of nowhere scatteredness is the main result of \cite{ThiVil23Glimm}. We give here some information about the proof, since it highlights the different nature of each property: While ideal-filteredness plays an instrumental role, property (V) appears to be a technical ---and perpahs automatic--- tool; see \autoref{qst:IFandV} below.

\begin{thm}[{\cite[Theorem~7.1]{ThiVil23Glimm}}]\label{thm:MainGGP}
    Let $A$ be a \ca{}. Then, the following are equivalent:
    \begin{itemize}
        \item[(i)] $A$ is nowhere scattered and $\Cu (A)$ is ideal-filtered and has property (V).
        \item[(ii)] $A$ has the Global Glimm Property.
    \end{itemize}
\end{thm}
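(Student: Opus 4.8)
The plan is to move everything to the Cuntz semigroup via \autoref{thm:EqChar}. By that theorem, condition (ii) is equivalent to $\Cu(A)$ being $(2,\omega)$-divisible, while the nowhere-scattered half of (i) is equivalent to $\Cu(A)$ being weakly $(2,\omega)$-divisible; and, in any case, (ii) implies nowhere scatteredness by \autoref{prp:GGPimpNSCa}. So the theorem reduces to the purely semigroup-theoretic assertion that $\Cu(A)$ is $(2,\omega)$-divisible if and only if it is weakly $(2,\omega)$-divisible and, in addition, ideal-filtered and has property (V).

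The implication ``$(2,\omega)$-divisible $\Rightarrow$ (i)'' should be routine. Weak $(2,\omega)$-divisibility is the special case $n=1$ of $(2,\omega)$-divisibility in \autoref{dfn:Div}, and $(2,\omega)$-divisibility is strong enough to force both conditions of \autoref{dfn:IFandV} essentially directly. For property (V): given the data, I would ignore the auxiliary elements $c,d_1,d_2$, interpolate $x'\ll x''\ll x$, apply $(2,\omega)$-divisibility to the pair $x''\ll x$ to obtain a single $d$ with $2d\le x$ and $x''\le Nd$, and then take $y=z=d$; the relation $x'\ll x''\le Nd$ forces $x'\llideal d$, so $y+z\le x$ and $x'\llideal y,z$ as needed. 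Ideal-filteredness is handled in the same spirit, by realizing the relevant intersection ideal through an element dominated by each of the two given elements and then dividing inside it; this may need slightly more care.

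The substantive direction is ``(i) $\Rightarrow$ (ii)'', i.e.\ upgrading weak $(2,\omega)$-divisibility to full $(2,\omega)$-divisibility in the presence of ideal-filteredness and property (V). Fix $x'\ll x$ in $\Cu(A)$, interpolate a finite chain $x'\ll x_1\ll x_2\ll x$, and --- using the ``for every $k$'' form of \autoref{thm:EqChar}~(1) --- apply weak $(k,\omega)$-divisibility, for a large $k$ to be chosen, to the pair $x_1\ll x_2$; this produces $d_1,\dots,d_n$ with $x_1\le d_1+\dots+d_n$ and $kd_j\le x_2\ll x$ for each $j$. The remaining task is to \emph{merge} the $d_j$ into a single divisor $d$ of $x$ with $x'\le Nd$. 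I would do this by an induction that combines two of the current pieces at a time: property (V) converts a sum of two pieces whose generated ideal captures $x'$ into a \emph{balanced} pair $y+z\le x$ with $x'$ lying in the ideal of each of $y$ and $z$, and ideal-filteredness then extracts a common lower bound $d\le y,z$ with $x'$ still in its ideal, so that $2d\le y+z\le x$. Each step is prepared by fresh interpolations and $\varepsilon$-cut-downs, which are what manufacture the compact-containment hypotheses that property (V) and ideal-filteredness demand.

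The main obstacle is exactly this merging, and specifically the hypothesis of property (V) that the relevant interpolant of $x'$ lie in the ideal generated by the \emph{sum of the two pieces being combined}. When $n>2$ one knows only that $x'$ sits in the ideal of the full sum $d_1+\dots+d_n$, not in that of any sub-sum of two of the $d_j$, so property (V) cannot be applied blindly. Overcoming this is the technical heart of the argument: it forces one both to choose carefully which pieces to combine and to maintain a running accumulator that retains $x'$ in its ideal throughout, and it is here that ideal-filteredness is indispensable, since it allows a pair of pieces to be replaced by a common core without destroying ideal membership. I would expect the cleanest packaging to be an auxiliary induction on $n$: assuming weak $(2,\omega)$-divisibility, ideal-filteredness and property (V), whenever $x'\llideal d_1+\dots+d_n$ with each $d_j$ small enough relative to $x$, there is a single $d$ with $2d\le x$ and $x'\llideal d$ --- the case $n=1$ being trivial and the passage from $n$ to $n-1$ being the real work.
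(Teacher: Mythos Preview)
Your outline is on the right track and your final two moves --- use property~(V) to pass from a pair of divisors to a balanced pair $y,z$ with $y+z\le x$ and $x'\llideal y,z$, then use ideal-filteredness to find $d\le y,z$ with $x'\llideal d$, so that $2d\le x$ --- are exactly the end-game the paper uses. But there is a genuine gap in how you get to that end-game.

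You treat property~(V) as if it applied to any pair $d_1,d_2$ with $x'\llideal d_1+d_2$, but look again at \autoref{dfn:IFandV}: (V) requires an auxiliary element $c$ with $d_1,d_2\ll c$ and $c+d_1,c+d_2\ll x$. Nothing in your setup produces such a $c$. Your weak $(k,\omega)$-divisors satisfy $kd_j\le x_2$, which does not give $d_j\ll c$ and $c+d_j\ll x$ for any obvious $c$. So even in the base case $n=2$ of your induction, (V) cannot be invoked as written, and the inductive step you flag as ``the real work'' is not just unproven but not yet well-posed.

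The paper resolves both issues at once, and with a different division of labor than you propose. It does \emph{not} interleave (V) and ideal-filteredness in an induction on $n$. Instead it cites \cite[Proposition~6.2]{ThiVil23Glimm}, which shows that \emph{ideal-filteredness alone} (together with weak $(2,\omega)$-divisibility) already reduces the many weak divisors to exactly two divisors $d_1,d_2$ \emph{and} supplies the element $c$ with $d_1,d_2\ll c$ and $c+d_1,c+d_2\ll x$ and $x''\llideal d_1+d_2$. That proposition is where the inductive merging you anticipate actually lives, and it is carried out without (V). Property~(V) is then applied \emph{once}, to this carefully prepared configuration, and ideal-filteredness is applied once more to extract the single divisor. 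So the missing idea is: the $n\to 2$ reduction is an ideal-filteredness argument that must also manufacture the element $c$, and only after that does (V) enter.
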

\begin{proof}[Sketch of the proof]
    Let $A$ be nowhere scattered, and let us prove that $\Cu (A)$ is $(2,\omega )$-divisible. First, we note that this condition is equivalent to the following property: For any $x',x\in\Cu (A)$ such that $x'\ll x$, there exists $y\in\Cu (A)$ such that $2d\leq x$ and $x'\llideal d$.
    
    Thus, let $x',x\in\Cu (A)$ be such that $x'\ll x$ and let us find such a $d$. Take $x''\in\Cu (A)$ such that $x'\ll x''\ll x$. As shown in \cite[Proposition~6.2]{ThiVil23Glimm}, ideal-filteredness alone is enough to reduce the number of divisors to two, that is, there exist $d_1,d_2,c\in\Cu (A)$ such that
    \[
            d_1,d_2\ll c,\quad 
            c+d_1,c+d_2\ll x,\andSep 
            x''\llideal d_1+d_2.
    \]
    In particular, we have $2d_1,2d_2\leq x$ and $x''\llideal d_1+d_2$. Thus, what remains to show is that property (V) can be used to `combine' $d_1$ and $d_2$ into a single divisor $d$.

    First, using (V) for $d_1,d_2,c,x'',x$, we obtain $y,z\in\Cu (A)$ such that $y+z\leq x$, $x''\llideal y$ and $x''\llideal z$. Now, applying ideal-filteredness again to 
    \[
        x'\ll x''\llideal y, z,
    \]
    we obtain $d\in\Cu (A)$ such that $x'\llideal d$ and $d\leq y,z$.

    Thus, we get $2d\leq y+z\leq x$, as required.
\end{proof}

With the previous result at hand, \autoref{qst:AlgGGP} can be rephrased once more by asking if every nowhere scattered \ca{} has an ideal-filtered Cuntz semigroup with property (V). We are not aware of any \ca{} whose Cuntz semigroup does not satisfy property (V)\footnote{In contrast, one can show that the Cuntz semigroup of $C(S^2)$ is not ideal-filtered \cite[Example~4.3]{ThiVil23Glimm}.}. Thus, we ask:

\begin{qst}\label{qst:IFandV}
    Let $A$ be a \ca{}. Does $\Cu (A)$ have property (V)? If $A$ is nowhere scattered, is $\Cu (A)$ ideal-filtered?
\end{qst}

Combining our results thus far we can now prove Theorems \ref{thm:GGPdim0} and \ref{thm:GGPsr1}.

\begin{cor}\label{cor:GGPsr1dim0}
    Let $A$ be a \ca{} of either topological dimension zero or stable rank one. Then, $A$ is nowhere scattered if and only if it has the Global Glimm Property.
\end{cor}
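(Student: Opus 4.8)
The plan is to read this corollary off as an immediate consequence of the two main results of this section, so that essentially no new work is required. The backward implication needs no hypothesis at all: if $A$ has the Global Glimm Property, then $A$ is nowhere scattered by \autoref{prp:GGPimpNSCa}. Hence all the content is in the forward direction.

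For the forward implication I would argue as follows. Suppose $A$ is a \ca{} of topological dimension zero or stable rank one, and assume in addition that $A$ is nowhere scattered. By \autoref{thm:IFandVList}, the dimension hypothesis on $A$ already guarantees that $\Cu (A)$ is ideal-filtered and has property~(V) --- and, crucially, this holds \emph{without} any appeal to nowhere scatteredness. Therefore condition~(i) of \autoref{thm:MainGGP} is satisfied: $A$ is nowhere scattered and $\Cu (A)$ is ideal-filtered with property~(V). Applying \autoref{thm:MainGGP} then yields that $A$ has the Global Glimm Property, which completes the proof.

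There is no genuine obstacle left at this level of the argument, since all the difficulty has been pushed into the cited results. If one wants to locate where the real work lies, it is in \autoref{thm:IFandVList}: ideal-filteredness is a substantively restrictive property on the Cuntz semigroup --- it fails, for instance, for $C(S^2)$ --- so the hypotheses of topological dimension zero or stable rank one cannot be dropped for free. Property~(V), by contrast, is conjecturally automatic for every \ca{} (cf.\ \autoref{qst:IFandV}), so morally the dimension assumptions are being used to secure ideal-filteredness, with property~(V) coming along as a comparatively soft technical companion.
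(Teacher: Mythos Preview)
Your proof is correct and follows exactly the paper's own argument: invoke \autoref{thm:IFandVList} to obtain ideal-filteredness and property~(V), and then apply \autoref{thm:MainGGP} to deduce the equivalence. The paper's proof is in fact terser than yours, omitting the explicit mention of \autoref{prp:GGPimpNSCa} for the backward direction (which is subsumed in the equivalence of \autoref{thm:MainGGP}) and the commentary about where the real difficulty lies.
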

\begin{proof}
    \autoref{thm:IFandVList} shows that $A$ has an ideal-filtered Cuntz semigroup with property (V). \autoref{thm:MainGGP} gives the desired equivalence.
\end{proof}

\stoptoc

\providecommand{\etalchar}[1]{$^{#1}$}
\providecommand{\bysame}{\leavevmode\hbox to3em{\hrulefill}\thinspace}
\providecommand{\noopsort}[1]{}
\providecommand{\mr}[1]{\href{http://www.ams.org/mathscinet-getitem?mr=#1}{MR~#1}}
\providecommand{\zbl}[1]{\href{http://www.zentralblatt-math.org/zmath/en/search/?q=an:#1}{Zbl~#1}}
\providecommand{\jfm}[1]{\href{http://www.emis.de/cgi-bin/JFM-item?#1}{JFM~#1}}
\providecommand{\arxiv}[1]{\href{http://www.arxiv.org/abs/#1}{arXiv~#1}}
\providecommand{\doi}[1]{\url{http://dx.doi.org/#1}}
\providecommand{\MR}{\relax\ifhmode\unskip\space\fi MR }
\providecommand{\MRhref}[2]{%
  \href{http://www.ams.org/mathscinet-getitem?mr=#1}{#2}
}
\providecommand{\href}[2]{#2}

\end{document}